\definecolor{darkblue}{rgb}{0.0,0.0,0.7}
\definecolor{linkequation}{rgb}{0.0, 0.0, 1.0}
\renewcommand*{\backrefalt}[4]{  \ifcase #1    
  \or (Cited on page:~#2.)  \else (Cited on pages:~#2.)  \fi
}
\newcommand*{\SavedEqref}{}
\let\SavedEqref\eqref
\renewcommand*{\eqref}[1]{  \begingroup
    \hypersetup{
      linkcolor=linkequation,
      linkbordercolor=linkequation,
    }    \SavedEqref{#1}  \endgroup
}
\newcommand*{\SavedRef}{}
\let\SavedRef\ref
\renewcommand*{\ref}[1]{  \begingroup
    \hypersetup{
      linkcolor=linkequation,
      linkbordercolor=linkequation,
    }    \SavedRef{#1}  \endgroup
}
\begin{document}

\title{A Unified Kantorovich Duality for Multimarginal Optimal Transport}

\author[1]{Yehya Cheryala}
\author[1]{Mokhtar Z. Alaya}
\author[1]{Salim Bouzebda}
\affil[1]{{\footnotesize Université de Technologie de Compiègne,  \authorcr
LMAC (Laboratoire de Mathématiques Appliquées de Compiègne), CS 60 319 - 60 203 Compiègne Cedex
}
\texttt{e-mails}: \texttt{yehya.cheryala@utc.fr}; \texttt{alayaelm@utc.fr}; \texttt{salim.bouzebda@utc.fr}}

\maketitle

\begin{abstract}
Multimarginal optimal transport (MOT) has gained increasing attention in recent years, notably due to its relevance in machine learning and statistics, where one seeks to jointly compare and align multiple probability distributions. 
This paper presents a unified and complete Kantorovich duality theory for MOT problem  on general Polish product spaces with bounded continuous cost function.
For marginal compact spaces, the duality identity is derived through a convex-analytic reformulation, that identifies the dual problem as a Fenchel-Rockafellar conjugate. 
We obtain dual attainment and show that optimal potentials may always be chosen in the class of $c$-conjugate families, thereby extending classical two-marginal conjugacy principle into a genuinely multimarginal setting. 
In non-compact setting, where direct compactness arguments are unavailable, we recover duality via a truncation-tightness procedure based on weak compactness of multimarginal transference plans and boundedness of the cost. 
We prove that the dual value is preserved under restriction to compact subsets and that admissible dual families can be regularized into uniformly bounded $c$-conjugate potentials. 
The argument relies on a refined use of $c$-splitting sets and their equivalence with multimarginal $c$-cyclical monotonicity. We then obtain dual attainment and exact primal-dual equality for MOT on arbitrary Polish spaces, together with a canonical representation of optimal dual potentials by $c$-conjugacy. 
These results provide a structural foundation for further developments in probabilistic and statistical analysis of MOT, including stability, differentiability, and asymptotic theory under marginal perturbations.
\end{abstract}
\begin{keywords}
Multimarginal optimal transport; Kantorovich duality; $c$-conjugate potentials; splitting sets
\end{keywords}

\section{Introduction}
Optimal transport (OT) has undergone a remarkable evolution over the last two centuries. It originates with Gaspard Monge’s seminal 1781 memoir \cite{monge1781memoire}, where he formulated the problem of transporting a mass distribution to another one at minimal cost. Although Monge’s formulation was elegant, it remained challenging due to its lack of convexity and the absence of a linear structure. A decisive breakthrough came with Kantorovich's work in 1942 \cite{article}, who introduced a relaxed, linear programming formulation. His idea was later refined by classical contributions of Sudakov \cite{Sudakov1979}, Knott–Smith \cite{knott1984}, Rachev–Rüschendorf \cite{rachev1998}, and many others, ultimately leading to a complete duality theory in the two-marginal case.

While two-marginal optimal transport is now well understood, the multimarginal extension, where one seeks to couple more than two probability distributions, was initiated much later. Mathematically, the classical two-marginal (bimarginal) optimal transport problem between probability measures $\mu$ on $X$ and $\nu$ on $Y$ with cost $c : X \times Y \to \mathbb{R}$ consists in solving
\begin{align*}
\mathsf{OT}_c(\mu,\nu)=\inf_{\pi \in \Pi(\mu,\nu)} \int_{X \times Y} c(x,y)\, d\pi(x,y),
\end{align*}
where $\Pi(\mu,\nu)$ denotes the set of couplings of $\mu$ and $\nu$, i.e.\ probability measures whose marginals are $\mu$ and $\nu$. In the multimarginal version, one considers $K \ge 3$ marginals $\mu_1,\dots,\mu_K$ supported on spaces $X_1,\dots,X_K$ and a cost $c : X_1 \times \cdots \times X_K \to \mathbb{R},$
and the problem becomes
\begin{align*}
\mathsf{MOT}_c(\mu_1,\ldots,\mu_K)=\inf_{\pi \in \Pi(\mu_1,\dots,\mu_K)}
\int_{X_1 \times \cdots \times X_K} 
c(x_1,\dots,x_K)\, d\pi(x_1,\dots,x_K),
\end{align*}
where $\Pi(\mu_1,\dots,\mu_K)$ is the set of probability measures on the product whose $k$-th marginal is $\mu_k$. The foundational work of Gangbo and \'Swiech \cite{gangbo1998} provided the first existence and structural results, and subsequent advances by Heinich \cite{heinich2002}, Carlier--Ekeland \cite{carlier2010}, and, more recently, Pass \cite{pass2015,pass2012}, have established multimarginal optimal transport (MOT) as a flexible framework in which {multi-way} interactions can be modeled.

Beyond its intrinsic mathematical interest, MOT has emerged as a modeling framework in several application domains where pairwise couplings are insufficient. 
From learning theory viewpoint, MOT provides a principled way to compare and fuse multiple distributions, making it a natural tool for multi-domain adaptation \cite{Hui2018,He2019}, multi-source generative modeling \cite{genevay2018,deshpande2018}, and barycentric representation learning \cite{peyre2019}. In such pipelines, the dual formulation is the interface with optimization and generalization: it is the dual potentials (or their parameterized surrogates) that appear in adversarial training and gradient-based schemes. For instance, the two-marginal dual is foundational in Wasserstein GANs \cite{arjovsky2017,Cao2019,solomon2015}, and related OT-based learning pipelines exploit the fact that dual variables provide both certificates and trainable objectives. In the multimarginal setting, the availability of $c$-conjugate maximizers suggests a canonical ``gauge'' for learning dual potentials, and raises theoretical questions about capacity control and sample complexity when these potentials are approximated by neural classes. 
In imaging, barycenter and multi-distribution averaging problems are prototypical MOT instances and come with a mature computational toolkit \cite{peyre2019}. Across these examples, a common message emerges: dual potentials are not merely analytical artifacts, but the objects that encode interaction geometry (via splitting sets) and that drive computation (via $c$-conjugation and related transforms). Understanding their existence and canonical structure is therefore foundational for both interpretability and robust numerical deployment.

The multimarginal setting, however, exhibits a level of complexity far exceeding the classical case: the geometry of optimal plans is richer and regularity becomes more delicate. One of the central challenges in multimarginal framework is to understand duality and the structure of optimal potentials. Building on this theoretical and historical foundation, the present work develops a unified Kantorovich duality framework for MOT. We first establish the duality principle on both compact and non-compact Polish spaces, and identify the precise functional-analytic mechanisms that guarantee the existence of optimal dual potentials. In particular, we prove that maximizers can always be chosen to be $c$-conjugate, thereby extending classical results from the two-marginal case to this considerably richer framework.
A second motivation for this study comes from the growing interest in statistical and asymptotic properties of optimal transport. Recent developments in empirical OT, functional delta methods, and central limit theorems \cite{del2019,tameling2019,fournier2015} highlight the crucial role played by the dual formulation and the regularity of optimal potentials. In the multimarginal case, these questions become substantially more delicate, due to the increased geometric complexity of optimal plans \cite{colombo2015} and the interaction between several marginals. The structural results developed in this article are particularly relevant for asymptotic analysis of multimarginal OT.

\paragraph{Contributions.} 

This work establishes a comprehensive Kantorovich duality framework for MOT under bounded continuous costs on both compact and general Polish product spaces.  We prove equality of primal and dual values and identify a canonical structural class of optimal potentials, thereby extending fundamental aspects of the classical two-marginal theory to  multimarginal settings. We detail the major contributions in the following.

\begin{itemize}    \item We obtain duality in the compact case through a convex-analytic reformulation of the dual problem and a direct application of Fenchel–Rockafellar duality, making explicit functional structure imposed by additive decomposability of potentials. 
    \item We extend the analysis to non-compact Polish spaces via tightness of multimarginal transport plans and a truncation-based approximation scheme, that preserves dual values without requiring coercivity assumptions on the cost.
    \item We prove that optimal dual potentials are 
$c$-conjugate in each marginal variable. In compact settings, this follows from conjugation-based monotonicity, equicontinuity inherited from the cost, and compactness arguments. In the non-compact case, where compactness of the potential space fails, the result is obtained through the geometric theory of optimal plans, exploiting the equivalence between optimality and splitting structures of supports and showing that any splitting family admits a canonical 
$c$-conjugate representative.
\end{itemize}
These results identify 
$c$-conjugate potentials as canonical representatives of the multimarginal Kantorovich dual problem. The associated structural regularity constitutes a necessary analytical substrate for perturbative and asymptotic analyses of multimarginal transport functionals and provides a principled foundation for subsequent advances in statistical inference, numerical approximation, and structurally constrained formulations of MOT.

\paragraph{Organization of the paper.}
The remainder of the paper is organized as follows. 
Section~\ref{wd} introduces the notation and preliminaries, and establishes the weak duality 
inequality. Section~\ref{secdual} derives the multimarginal Kantorovich duality in the compact 
setting through a convex-analytic reformulation and the Fenchel-Rockafellar theorem. Section~\ref{secdual2} extends the duality identity to general 
non-compact Polish spaces using tightness arguments and a truncation procedure that 
reduces the problem to compact subsets while controlling the approximation error. 
Section~\ref{secex} investigates optimal dual potentials, constructing $c$-conjugate maximizers in 
the compact case and relying on $c$-splitting sets to recover dual attainment in the 
non-compact setting. Finally, Appendix~\ref{secapp}
collects auxiliary lemmas and technical results used throughout the paper.

\section{Preliminaries and weak duality}\label{wd} 
Let $K \ge 3$, $X_1,\ldots,X_K$ be measurable spaces, and $\mu_k \in \mathcal{P}(X_k)$ for $k=1,\dots,K$ where $\mathcal{P}(X_k)$ denotes the set of probability measures on $X_k$. For each $k=1,\dots,K$, we denote by $\mathrm{pr}_k : X_1 \times \cdots \times X_K \to X_k$ the canonical projection onto the $k$-th coordinate. We define the set of multimarginal couplings by
\begin{align*}
\Pi(\mu_1,\dots,\mu_K)
= \bigl\{\, \pi \in \mathcal{P}(X_1 \times \cdots \times X_K)
\;|\; (\mathrm{pr}_k)_{\#}\pi = \mu_k \ \text{for all } k=1,\dots,K \,\bigr\}.
\end{align*}
Let $c : X_1 \times \cdots \times X_K \to \mathbb{R}$ be a bounded measurable cost function.
For a transport plan $\pi$ and a family of functions $\boldsymbol{f} = (f_1,\dots,f_K)$
with $f_k : X_k \to \mathbb{R}$, we define
\begin{align*}
I(\pi)
&= \int_{X_1 \times \cdots \times X_K}
c(x_1,\ldots,x_K)\, d\pi(x_1,\ldots,x_K)  \text{  and   } 
J(\boldsymbol{f})
= \sum_{k=1}^{K} \int_{X_k} f_k\, d\mu_k.
\end{align*}
The multimarginal optimal transport (MOT) problem associated with the cost function $c$
and the marginals $\mu_1,\ldots,\mu_K$ is defined as
\begin{equation*}
\mathsf{MOT}_c(\mu_1,\ldots,\mu_K)
= \inf_{\pi \in \Pi(\mu_1,\ldots,\mu_K)} I(\pi).
\end{equation*}
We define the admissible class
\begin{equation*}
\begin{aligned}
\mathcal F_c
=\;&
\Bigl\{
\boldsymbol f=(f_1,\ldots,f_K)\ \Big|\ 
\forall k,\ f_k:X_k\to\mathbb R \text{ is Borel and }
f_k\in L^1(d\mu_k),
\\
&\qquad
\text{and }\;
\sum_{k=1}^K f_k(x_k)\le c(x_1,\ldots,x_K),
\ \forall (x_1,\ldots,x_K)
\Bigr\}.
\end{aligned}
\end{equation*}
We recall that, for each $k$, $L^1(d\mu_k)
= \{\, f : X_k \to \mathbb{R} \ \text{measurable} 
\;|\; \int_{X_k} |f| \, d\mu_k < \infty \}$. We also denote by $\mathcal{C}_b(X_k)
= \{\, f : X_k \to \mathbb{R} \ \text{continuous and bounded} \,\}$ the space of bounded continuous functions on $X_k$. With this notation, the multimarginal Kantorovich primal problem can be written as
\begin{align*}
(\mathsf{MOT}\!-\!\mathsf{KP}):
\inf\bigl\{I(\pi):\;
\pi\in\Pi(\mu_1,\dots,\mu_K)\bigr\},
\end{align*}
and its associated dual problem is given by
\begin{align*}
(\mathsf{MOT}\!-\!\mathsf{DP}):
\sup\Bigl\{
J(\boldsymbol{f}):\;
\boldsymbol{f}\in\mathcal F_c
\Bigr\}.
\end{align*}
With these definitions in place, we now state the weak duality inequality, which holds without any compactness assumption. Its proof is given in Section~\ref{weak}.

\begin{lemma}\label{easyy}
Under the above assumptions on the measurable spaces \((X_k)_{k=1}^K\), the marginals
\((\mu_k)_{k=1}^K\), and the bounded measurable cost function
\(c:X_1\times\cdots\times X_K\to\mathbb R\), the following weak duality relation holds:
\begin{align*}
\sup_{\boldsymbol f\in\mathcal F_c \cap \prod_{k=1}^K \mathcal C_b(X_k)}
J(\boldsymbol f)
\;\le\;
\sup_{\boldsymbol f\in\mathcal F_c \cap \prod_{k=1}^K L^1(\mu_k)}
J(\boldsymbol f)
\;\le\;
\inf_{\pi\in\Pi(\mu_1,\dots,\mu_K)} I(\pi).
\end{align*}
\end{lemma}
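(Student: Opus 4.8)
The plan is to prove the two inequalities separately. The first one, $\sup_{\boldsymbol f\in\mathcal F_c \cap \prod_k \mathcal C_b(X_k)} J(\boldsymbol f) \le \sup_{\boldsymbol f\in\mathcal F_c \cap \prod_k L^1(\mu_k)} J(\boldsymbol f)$, is immediate: since each $X_k$ carries a probability measure $\mu_k$ and bounded functions are trivially $\mu_k$-integrable, the feasible set $\mathcal F_c \cap \prod_k \mathcal C_b(X_k)$ is contained in $\mathcal F_c \cap \prod_k L^1(\mu_k)$, so the supremum over the smaller set cannot exceed the supremum over the larger one. This step requires essentially no work beyond spelling out the inclusion.

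For the second inequality, I would fix an arbitrary feasible pair $\boldsymbol f = (f_1,\dots,f_K) \in \mathcal F_c \cap \prod_k L^1(\mu_k)$ and an arbitrary $\pi \in \Pi(\mu_1,\dots,\mu_K)$, and show $J(\boldsymbol f) \le I(\pi)$; taking the supremum over $\boldsymbol f$ and then the infimum over $\pi$ yields the claim. The key computation is to integrate the admissibility constraint $\sum_{k=1}^K f_k(x_k) \le c(x_1,\dots,x_K)$ against $\pi$ over the product space. Since $c$ is bounded and measurable, $I(\pi)$ is well-defined and finite; since each $f_k \in L^1(\mu_k)$ and $(\mathrm{pr}_k)_\# \pi = \mu_k$, the change-of-variables formula gives
\begin{align*}
\int_{X_1\times\cdots\times X_K} f_k(x_k)\, d\pi(x_1,\dots,x_K) = \int_{X_k} f_k\, d\mu_k,
\end{align*}
so each term $x \mapsto f_k(x_k)$ is $\pi$-integrable. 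Hence $\sum_{k=1}^K f_k(x_k)$ is $\pi$-integrable, and monotonicity of the integral applied to the pointwise inequality gives
\begin{align*}
J(\boldsymbol f) = \sum_{k=1}^K \int_{X_k} f_k\, d\mu_k = \int \Bigl(\sum_{k=1}^K f_k(x_k)\Bigr) d\pi \le \int c\, d\pi = I(\pi).
\end{align*}

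There is essentially no serious obstacle here — this is the standard weak duality argument — but the one point requiring mild care is the integrability bookkeeping: one must ensure that the sum $\sum_k f_k(x_k)$ is genuinely $\pi$-integrable before invoking linearity and monotonicity of the integral, and that $I(\pi)$ is finite so that the inequality $\int(\sum_k f_k)\,d\pi \le \int c\,d\pi$ is not of the form $-\infty \le -\infty$ or otherwise degenerate. Both follow from boundedness of $c$ and the $L^1$ hypothesis on the $f_k$ together with the marginal constraints, so the proof is short. I would present the two displayed chains above essentially verbatim, preceded by the remark on the change of variables and a line confirming finiteness of all quantities involved.
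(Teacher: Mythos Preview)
Your proposal is correct and follows essentially the same route as the paper's own proof: the first inequality comes from the inclusion $\mathcal C_b(X_k)\subset L^1(\mu_k)$, and the second from integrating the admissibility constraint against an arbitrary $\pi\in\Pi(\mu_1,\dots,\mu_K)$ and using the marginal condition to rewrite $J(\boldsymbol f)$ as a single integral over the product. Your added remarks on integrability are a slight refinement in rigor but do not constitute a different argument.
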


\section{Duality: compact case}\label{secdual}
We begin by introducing the dual formulation of the multimarginal Kantorovich problem in the
compact setting. This framework provides the basic ingredients needed for the general theory and
allows us to establish the duality identity using standard tools from convex analysis. The main
result is stated below.
\begin{theorem}\label{KANTO}
Let $X_1,\ldots,X_K$ be compact metric spaces, and let $\mu_k\in\mathcal{P}(X_k)$.
Let $c : X_1 \times \cdots \times X_K \to \mathbb{R}_+$ be continuous. Then the duality formula holds,
\begin{align*}
\mathsf{MOT}_c(\mu_1,\ldots,\mu_K)
=\inf_{\pi\in\Pi(\mu_1,\dots,\mu_K)} I(\pi)
\;=\;
\sup_{\boldsymbol{f}\in \mathcal{F}_c} J(\boldsymbol{f}).
\end{align*}
\end{theorem}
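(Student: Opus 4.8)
The plan is to realize the dual problem as a Fenchel--Rockafellar conjugate, following the standard convex-analytic template adapted to the multimarginal additive structure. I would work in the Banach space $E = \prod_{k=1}^K \mathcal{C}(X_k)$ (with the sup norm on each factor; since $X_k$ is compact metric, $\mathcal{C}_b(X_k) = \mathcal{C}(X_k)$), whose dual is $\prod_{k=1}^K \mathcal{M}(X_k)$ by the Riesz representation theorem. On $E$ define two convex functionals: $\Theta_1(\boldsymbol f) = 0$ if $\sum_k f_k(x_k) \le c(x_1,\ldots,x_K)$ for all $(x_1,\ldots,x_K)$ and $+\infty$ otherwise; and $\Theta_2(\boldsymbol f) = -\sum_k \int_{X_k} f_k\, d\mu_k = -J(\boldsymbol f)$, which is linear and continuous. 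Then $\sup_{\boldsymbol f \in \mathcal F_c \cap E} J(\boldsymbol f) = -\inf_{\boldsymbol f \in E}\bigl(\Theta_1(\boldsymbol f) + \Theta_2(\boldsymbol f)\bigr)$, and Fenchel--Rockafellar gives this equals $\min_{\boldsymbol\pi \in E^*}\bigl(\Theta_1^*(-\boldsymbol\pi) + \Theta_2^*(\boldsymbol\pi)\bigr)$ provided a qualification condition holds at a point where $\Theta_1$ is finite and $\Theta_2$ is continuous — which is immediate here, e.g. taking $\boldsymbol f \equiv (\min c, 0, \ldots, 0)$ or just $\boldsymbol f \equiv 0$ if $c \ge 0$, since $\Theta_2$ is continuous everywhere on $E$.

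The two conjugates must then be computed explicitly. The functional $\Theta_2^*$ is the indicator of the singleton corresponding to the marginal constraint: $\Theta_2^*(\boldsymbol\pi) = 0$ if, writing $\boldsymbol\pi = (\pi_1,\ldots,\pi_K)$ viewed through a common plan, the relevant identification forces $\pi_k = \mu_k$, and $+\infty$ otherwise. More precisely one computes $\Theta_1^*(\boldsymbol\lambda) = \sup_{\boldsymbol f}\bigl(\sum_k \langle f_k, \lambda_k\rangle\bigr)$ over $\boldsymbol f$ with $\sum_k f_k \le c$; I would show this supremum is finite only when all $\lambda_k$ are nonnegative measures of equal total mass arising as marginals of a single nonnegative measure $\pi \in \mathcal{M}_+(X_1 \times \cdots \times X_K)$ with $(\mathrm{pr}_k)_\#\pi = \lambda_k$, in which case $\Theta_1^*(\boldsymbol\lambda) = \int c\, d\pi$ (this is itself a small duality/disintegration argument: the constraint couples the $f_k$ only through the inequality against $c$, and testing against indicator-like functions forces the nonnegativity and common-mass conditions, while the value is pinned down by $\int c\,d\pi$ once a compatible $\pi$ exists; one must check that the sup over such $\pi$ is attained or that the infimum below compensates). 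Combining, $\Theta_1^*(-\boldsymbol\pi) + \Theta_2^*(\boldsymbol\pi)$ is finite precisely when $\boldsymbol\pi$ corresponds to a plan in $\Pi(\mu_1,\ldots,\mu_K)$, with value $\int c\, d\pi = I(\pi)$, so the right-hand side of Fenchel--Rockafellar is exactly $\inf_{\pi \in \Pi(\mu_1,\ldots,\mu_K)} I(\pi)$.

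Finally I would reconcile the restricted supremum $\sup_{\boldsymbol f \in \mathcal F_c \cap \prod_k \mathcal C_b(X_k)} J(\boldsymbol f)$ with the full supremum over $\mathcal F_c$ appearing in the theorem statement. The inequality $\le$ is trivial and $\ge$ follows from weak duality (Lemma~\ref{easyy}): we have
\begin{align*}
\sup_{\boldsymbol f \in \mathcal F_c \cap \prod_k \mathcal C_b(X_k)} J(\boldsymbol f)
\;\le\;
\sup_{\boldsymbol f \in \mathcal F_c} J(\boldsymbol f)
\;\le\;
\inf_{\pi \in \Pi(\mu_1,\ldots,\mu_K)} I(\pi),
\end{align*}
and since the Fenchel--Rockafellar computation shows the leftmost quantity already equals the rightmost one, all three coincide, which is the claimed identity. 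I expect the main obstacle to be the explicit computation of $\Theta_1^*$ — verifying that the only measures giving a finite conjugate are marginals of a common nonnegative plan and that the conjugate value equals $\int c\,d\pi$ — since this is where the multimarginal additive structure genuinely enters and requires a careful argument (a disintegration or an auxiliary minimax exchange) rather than a direct quotation of the two-marginal case; a secondary technical point is ensuring the attainment (the ``$\min$'' in Fenchel--Rockafellar) is legitimate, which rests on the qualification condition and the properness and lower semicontinuity of the functionals involved.
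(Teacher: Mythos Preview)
Your setup has a structural gap that makes the argument circular. With $E=\prod_{k=1}^K\mathcal C(X_k)$ the dual is $E^*=\prod_{k=1}^K\mathcal M(X_k)$, a space of \emph{tuples of measures on the factors}, not of measures on the product. A dual variable $\boldsymbol\lambda=(\lambda_1,\ldots,\lambda_K)\in E^*$ therefore cannot ``correspond to a plan'', and $I(\pi)=\int c\,d\pi$ has no meaning for such a tuple. Concretely, with your functionals the Fenchel--Rockafellar identity collapses to a tautology: since $\Theta_2$ is linear, $\Theta_2^*(\boldsymbol\lambda)=0$ exactly when $\lambda_k=-\mu_k$ for all $k$ and $+\infty$ otherwise, so the dual maximum is pinned at the single point $-\boldsymbol\mu$ and the identity reads
\[
\sup_{\boldsymbol f\in\mathcal F_c\cap E}J(\boldsymbol f)\;=\;\Theta_1^*(\boldsymbol\mu)\;=\;\sup_{\sum_k f_k\le c}\ \sum_{k=1}^K\int_{X_k} f_k\,d\mu_k,
\]
which is just the dual value you started from. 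No transport plan has entered the picture.

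The step you flag as ``the main obstacle'' --- proving that $\Theta_1^*(\boldsymbol\lambda)$ equals $\int c\,d\pi$ for a suitable $\pi$ with marginals $\lambda_k$ --- is not a technicality but the whole theorem: the identity $\sup_{\sum_k f_k\le c}\sum_k\int f_k\,d\lambda_k=\inf\{\int c\,d\pi:(\mathrm{pr}_k)_\#\pi=\lambda_k\}$ \emph{is} multimarginal Kantorovich duality. The ``auxiliary minimax exchange'' you mention would itself require a duality or compactness argument on $\mathcal M(X_1\times\cdots\times X_K)$, i.e.\ precisely what you are trying to prove. The paper avoids this by taking $E=\mathcal C(X_1\times\cdots\times X_K)$, so that by Riesz $E^*=\mathcal M(X_1\times\cdots\times X_K)$ and the dual variable \emph{is} a signed measure on the product. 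The two convex functionals are then the indicator of $\{u\ge -c\}$ (whose conjugate gives nonnegativity of $\pi$ together with the value $\int c\,d\pi$) and a functional finite only on additively separable $u(x_1,\ldots,x_K)=\sum_k f_k(x_k)$ with value $\sum_k\int f_k\,d\mu_k$ (whose conjugate is the indicator of $\Pi(\mu_1,\ldots,\mu_K)$). With this choice of ambient space both conjugates are elementary and the primal side of Fenchel--Rockafellar is exactly $-\sup_{\mathcal F_c\cap\prod_k\mathcal C_b}J$; the sandwich with Lemma~\ref{easyy} then finishes as you describe.
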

To prove Theorem~\ref{KANTO}, we reformulate the dual problem using two convex 
functionals defined on the Banach space $E = \mathcal{C}(X_1 \times \cdots \times X_K)$. We now introduce these functionals in two lemmas that gather the properties
required to apply the Fenchel-Rockafellar duality theorem (see Theorem~\ref{fenchell}). 
The proofs of these lemmas are provided in Appendix~\ref{ThetaXi}.

\begin{lemma}\label{Theta}
The functional $\Theta : E \longrightarrow \mathbb{R}\cup\{+\infty\}$ defined by
\begin{align*}
\Theta(u)=
\begin{cases}
0 & \text{if } u(x_1,\ldots,x_K)\ge -\,c(x_1,\ldots,x_K),\ \\[1mm]
+\infty & \text{otherwise,}
\end{cases}
\end{align*}
is convex and continuous at $u\equiv 1$.
\end{lemma}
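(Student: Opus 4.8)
The plan is to recognize $\Theta$ as the (convex-analytic) indicator function of the set
\[
C \;=\; \bigl\{\, u \in E \;:\; u(x_1,\dots,x_K) \ge -\,c(x_1,\dots,x_K)\ \text{for every } (x_1,\dots,x_K)\in X_1\times\cdots\times X_K \,\bigr\},
\]
so that $\Theta(u)=0$ on $C$ and $\Theta(u)=+\infty$ otherwise. Convexity of $\Theta$ then reduces to convexity of $C$: if $u_0,u_1\in C$, $\lambda\in[0,1]$, and $\boldsymbol x=(x_1,\dots,x_K)$ is any point of the product, then $(\lambda u_1+(1-\lambda)u_0)(\boldsymbol x)\ge \lambda(-c(\boldsymbol x))+(1-\lambda)(-c(\boldsymbol x))=-c(\boldsymbol x)$, so $\lambda u_1+(1-\lambda)u_0\in C$; hence $\Theta$ is convex (and, as a byproduct, lower semicontinuous, since $C$ is closed in the uniform norm, being an intersection of the closed half-spaces $\{u:u(\boldsymbol x)\ge -c(\boldsymbol x)\}$).

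For continuity at the constant function $u\equiv 1$, the point is that nonnegativity of $c$ makes $u\equiv 1$ an interior point of the effective domain of $\Theta$. Indeed, since $c\ge 0$ we have $1\ge 0\ge -c$ pointwise, so $u\equiv 1$ lies in $C$ and $\Theta(1)=0$; and if $u\in E$ satisfies $\|u-1\|_\infty<1$, then $u(\boldsymbol x)>0\ge -c(\boldsymbol x)$ for every $\boldsymbol x$, so $u\in C$ and $\Theta(u)=0$. Thus $\Theta$ is identically $0$ on the open ball $\{u\in E:\|u-1\|_\infty<1\}$; in particular it is finite and constant, hence continuous, on a neighborhood of $u\equiv 1$. (For an $\mathbb R\cup\{+\infty\}$-valued map this is exactly what ``continuous at $u\equiv 1$'' means: since $\Theta$ takes only the values $0$ and $+\infty$, continuity at a point where it vanishes is equivalent to vanishing on a neighborhood of that point.)

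I do not anticipate a real obstacle in this lemma; it plays the role of the qualification hypothesis in the Fenchel--Rockafellar theorem, and the only things to be careful about are spelling out the meaning of continuity for extended-real-valued convex functionals and highlighting that the hypothesis $c\ge 0$ is precisely what provides the strictly feasible point $u\equiv 1$. The genuinely substantive steps in the proof of Theorem~\ref{KANTO} lie elsewhere --- in the companion functional $\Xi$ and the computation of its conjugate, in the identification of $\sup_{\boldsymbol f\in\mathcal F_c}J(\boldsymbol f)$ with the Fenchel--Rockafellar dual value (where the additive structure $\sum_{k}f_k(x_k)$ of the constraint enters), and in passing from continuous bounded potentials to the full class $\mathcal F_c$ via Lemma~\ref{easyy}.
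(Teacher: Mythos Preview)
Your proof is correct and follows essentially the same approach as the paper: both verify convexity via the pointwise inequality $tu_1+(1-t)u_2\ge -c$ whenever $u_1,u_2\ge -c$, and both obtain continuity at $u\equiv1$ by observing that $c\ge0$ forces $\Theta$ to vanish on the open ball $\{\|u-1\|_\infty<1\}$. Your additional framing of $\Theta$ as the indicator of a closed convex set and the remarks on its role as the qualification condition in Fenchel--Rockafellar are accurate context, but do not alter the underlying argument.
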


\begin{lemma}\label{Xi}
The functional $\Xi:E\to\mathbb{R}\cup\{+\infty\}$ defined by
\begin{align*}
\qquad
\Xi(u)=
\begin{cases}
\displaystyle \sum_{k=1}^{K} \int_{X_k}f_k\,d\mu_k
& \text{if } u(x_1,\ldots,x_K)=\sum_{k=1}^{K} f_k(x_k),\\[2mm]
+\infty & \text{otherwise,}
\end{cases}
\end{align*}
is well defined and convex. Moreover, $\Xi(1)<+\infty$.
\end{lemma}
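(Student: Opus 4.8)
The plan is to establish the three assertions in order — well-definedness (and real-valuedness), convexity, and $\Xi(1)<+\infty$ — of which only the first requires a genuine argument.

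The heart of the matter is that the additive decomposition $u=\sum_{k}f_k$ is not unique, so I must check that $\sum_{k=1}^K\int_{X_k}f_k\,d\mu_k$ depends only on $u$. First, a base-point substitution shows that any admissible decomposition of $u\in E=\mathcal C(X_1\times\cdots\times X_K)$ has continuous components: fixing points $x_\ell^0\in X_\ell$ for $\ell\ne j$ and varying $x_j$ gives $f_j(x_j)=u(x_1^0,\dots,x_j,\dots,x_K^0)-\sum_{\ell\ne j}f_\ell(x_\ell^0)$, so $f_j$ differs by a constant from a continuous function on the compact space $X_j$; hence $f_j$ is continuous, bounded, and $\mu_j$-integrable, and in particular $\Xi(u)\in\mathbb R$ whenever $u$ is additively decomposable. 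Now suppose $u(x_1,\dots,x_K)=\sum_{k=1}^K f_k(x_k)=\sum_{k=1}^K g_k(x_k)$ for all $(x_1,\dots,x_K)$, and put $h_k:=f_k-g_k$, so that $\sum_{k=1}^K h_k(x_k)\equiv 0$. The same substitution yields $h_j\equiv c_j$ for a constant $c_j$, and evaluating the identity at $(x_1^0,\dots,x_K^0)$ gives $\sum_{k=1}^K c_k=0$; since each $\mu_k$ is a probability measure, $\sum_k\int_{X_k}f_k\,d\mu_k-\sum_k\int_{X_k}g_k\,d\mu_k=\sum_k\int_{X_k}h_k\,d\mu_k=\sum_k c_k\,\mu_k(X_k)=\sum_k c_k=0$. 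Hence $\Xi$ is well defined.

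For convexity, observe that the set $\{u\in E : u=\sum_k f_k\text{ for some }f_k\}$ on which $\Xi$ is finite is a linear subspace of $E$ — sums and scalar multiples of additively decomposable functions are again additively decomposable — and on this subspace $\Xi$ is a linear functional, by the well-definedness just proved together with linearity of the integral. A linear functional on a linear subspace, extended by $+\infty$ off the subspace, is convex: for $u,v\in E$ and $\lambda\in[0,1]$, either both $u,v$ lie in the subspace, in which case so does $\lambda u+(1-\lambda)v$ and $\Xi(\lambda u+(1-\lambda)v)=\lambda\Xi(u)+(1-\lambda)\Xi(v)$, or at least one of $\Xi(u),\Xi(v)$ equals $+\infty$ and the convexity inequality is immediate. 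Finally, the constant function $u\equiv 1$ admits the decomposition $1=\sum_{k=1}^K f_k$ with $f_k\equiv 1/K$, whence $\Xi(1)=\sum_{k=1}^K\tfrac1K\,\mu_k(X_k)=1<+\infty$.

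The only real obstacle is the well-definedness step, whose crux is the elementary observation that the kernel of the map $(f_1,\dots,f_K)\mapsto\sum_k f_k$ is exactly the set of tuples of constants summing to zero; the normalization $\mu_k(X_k)=1$ then converts this into invariance of $\Xi$ under change of decomposition. The remaining assertions are formal consequences.
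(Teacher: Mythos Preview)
Your proof is correct and follows essentially the same route as the paper's: both arguments show well-definedness by fixing all but one coordinate to see that two decompositions differ by constants $c_k$ with $\sum_k c_k=0$, then exploit $\mu_k(X_k)=1$; convexity is in both cases just linearity on the additively decomposable subspace, and for $\Xi(1)$ you pick $f_k\equiv 1/K$ where the paper picks $f_1\equiv 1$, $f_k\equiv 0$ for $k\ge2$. Your additional remark that each $f_j$ inherits continuity from $u$ (and is therefore $\mu_j$-integrable) is a minor but welcome clarification that the paper leaves implicit.
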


\subsection{Proof of Theorem~\ref{KANTO}}
We recall from the Riesz representation theorem (see Theorem~\ref{RIEZ}) that the dual space \(E^{\ast}\) of $E=\mathcal{C}(X_1\times\ldots\times X_K)$ can be
identified isometrically with the space 
\(\mathcal{M}(X_{1}\times\cdots\times X_{K})\) of finite
signed Radon measures on the product, endowed with the total variation norm. Hence, to each \(L\in E^{\ast}\) corresponds a unique
\(\pi\in \mathcal{M}(X_{1}\times\cdots\times X_{K})\) such that
\begin{align*}
L(u)=\int_{X_{1}\times\cdots\times X_{K}} u\, d\pi.
\end{align*}
Our goal is to compute both sides of the duality relation provided by the Fenchel-Rockafellar theorem. The left-hand side is clearly
\begin{align*}
\inf\left\{
\sum_{k=1}^{K} \int_{X_{k}} f_{k}\, d\mu_{k}
\;:\;
\sum_{k=1}^{K} f_{k}(x_{k}) \ge -\,c(x_{1},\ldots,x_{K})
\right\}
&=
- \sup\left\{
J(\boldsymbol{f}) : \boldsymbol{f}\in \mathcal{F}_{c}
\right\}.
\end{align*}

\paragraph{Legendre-Fenchel transform of \(\Theta\).}
For \(\pi\in \mathcal{M}(X_{1}\times\cdots\times X_{K})\),
\begin{align*}
\Theta^{\ast}(\pi)
&=
\sup_{u\in E}
\left\{
\int u\, d\pi - \Theta(u)
\right\}
=
\sup_{\substack{u\in E\\ u\ge -c}}
\int u\, d\pi.
\end{align*}
Hence
\begin{align*}
\Theta^{\ast}(-\pi)
=
\sup_{\substack{u\in E\\ u\le c}}
\int u\, d\pi.
\end{align*}
If \(\pi\) is not a nonnegative measure, then there exists
\(v\in \mathcal{C}_{b}(X_{1}\times\cdots\times X_{K})\), \(v\le 0\), such that
\(\int v\, d\pi>0\). Choosing \(u=t v\) and letting \(t\to +\infty\)
gives \(\Theta^{\ast}(-\pi)=+\infty\). If instead \(\pi\ge 0\), the supremum is attained at \(u=c\), and we obtain $\Theta^{\ast}(-\pi)=\int c\, d\pi$. Thus
\begin{align*}
\Theta^{\ast}(-\pi)
=
\begin{cases}
\displaystyle \int c\, d\pi, & \pi\in \mathcal{M}_{+}(X_{1}\times\cdots\times X_{K}),\\[0.3em]
+\infty, & \text{otherwise}.
\end{cases}
\end{align*}
\paragraph{Legendre-Fenchel transform of \(\Xi\).}
For any \(\pi\in \mathcal{M}(X_{1}\times\cdots\times X_{K})\),
\begin{align*}
\Xi^{\ast}(\pi)
=
\sup_{u\in \mathcal{C}_{b}(X_{1}\times\cdots\times X_{K})}
\Big(
\int u\, d\pi - \Xi(u)
\Big).
\end{align*}
Since \(\Xi(u)<\infty\) only when 
\(
u(x_{1},\ldots,x_{K})=\sum_{k=1}^{K} f_{k}(x_{k})
\),
this becomes
\begin{align*}
\Xi^{\ast}(\pi)
=
\sup_{(f_{1},\ldots,f_{K})}
\Bigg(
\int_{X_{1}\times\cdots\times X_{K}}
\sum_{k=1}^{K} f_{k}(x_{k})\, d\pi
-
\sum_{k=1}^{K} \int_{X_{k}} f_{k} \, d\mu_{k}
\Bigg).
\end{align*}
If
\begin{align*}
\int \sum_{k=1}^{K} f_{k}(x_{k})\, d\pi
=
\sum_{k=1}^{K} \int f_{k}\, d\mu_{k}
\quad\text{for all }(f_{1},\ldots,f_{K}),
\end{align*}
then \(\Xi^{\ast}(\pi)=0\). Otherwise, there exists a family \((f_{1},\ldots,f_{K})\) for which the
difference is nonzero. Scaling \(f_{k}\) by \(t\) (or \(-t\)) yields a
quantity that diverges to \(+\infty\), hence \(\Xi^{\ast}(\pi)=+\infty\). Thus
\begin{align*}
\Xi^{\ast}(\pi)
=
\begin{cases}
0, & \pi\in \Pi(\mu_{1},\ldots,\mu_{K}),\\[0.3em]
+\infty, & \text{otherwise}
\end{cases}
\end{align*}
Both \(\Theta^{\ast}\) and \(\Xi^{\ast}\) are indicator functions:
\begin{align*}
\Theta^{\ast}(-\pi)=
\begin{cases}
\int c\, d\pi, & \pi\ge 0,\\[0.2em]
+\infty, & \text{otherwise},
\end{cases}
\qquad
\Xi^{\ast}(\pi)=
\begin{cases}
0, & \pi\in\Pi(\mu_{1},\ldots,\mu_{K}),\\[0.2em]
+\infty, & \text{otherwise}.
\end{cases}
\end{align*}
Therefore,
\begin{align*}
\max_{\pi\in E^{\ast}}
\big\{
-\Theta^{\ast}(-\pi)-\Xi^{\ast}(\pi)
\big\}
=
\max_{\pi\in\Pi(\mu_{1},\ldots,\mu_{K})}
\left(
-\int c\, d\pi
\right)
=
-\inf_{\pi\in\Pi(\mu_{1},\ldots,\mu_{K})}
\int c\, d\pi.
\end{align*}
Putting everything together and changing signs, we obtain
\begin{align*}
\inf_{\pi\in\Pi(\mu_{1},\ldots,\mu_{K})} I(\pi)
=
\sup_{\boldsymbol{f}\in \mathcal{F}_{c}\cap\prod_{k=1}^{K}\mathcal{C}_{b}(X_{k})}
J(\boldsymbol{f}).
\end{align*}
By Lemma~\ref{easyy}, this yields
\begin{align*}
\inf_{\pi\in\Pi(\mu_1,\ldots,\mu_K)} I(\pi)
=
\sup_{\boldsymbol f\in \mathcal F_c} J(\boldsymbol f),
\end{align*}
which completes the proof of Theorem~\ref{KANTO}.
 \hfill $\Box$

\section{Duality: non-compact case}\label{secdual2}
In the non-compact case, we must recover a compactness property for the set 
$\Pi(\mu_1,\dots,\mu_K)$. The next two lemmas establish the tightness of multimarginal transference plans and show that this tightness implies compactness under the weak topology. The proofs of both lemmas are given in Appendix~\ref{tightcompact}.
\begin{lemma}\label{tight}
Let $X_1,\dots,X_K$ be Polish spaces.  
For each $k\in\{1,\dots,K\}$, let $P_k \subset \mathcal P(X_k)$ be a tight subset of $\mathcal P(X_k)$.  
Then the set $\Pi(P_1,\dots,P_K)$ of all transference plans on $X_1 \times \cdots \times X_K$ whose $k$-th marginal lies in $P_k$ for all $k$, is itself tight in $\mathcal P(X_1 \times \cdots \times X_K)$.
\end{lemma}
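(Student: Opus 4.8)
The plan is to reduce the multimarginal statement to the classical fact that a finite collection of tight families of probability measures on Polish spaces, and hence products thereof, remains tight. First I would recall the key characterization: by Prokhorov's theorem (or directly by the definition of tightness), a subset $P \subset \mathcal P(X)$ on a Polish space $X$ is tight iff for every $\varepsilon > 0$ there is a compact $K_\varepsilon \subset X$ with $\mu(X \setminus K_\varepsilon) \le \varepsilon$ for all $\mu \in P$. So fix $\varepsilon > 0$. For each $k \in \{1,\dots,K\}$, tightness of $P_k$ gives a compact set $C_k^\varepsilon \subset X_k$ with $\nu(X_k \setminus C_k^\varepsilon) \le \varepsilon/K$ for all $\nu \in P_k$.

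Next I would form the candidate compact set in the product, namely $C^\varepsilon := C_1^\varepsilon \times \cdots \times C_K^\varepsilon \subset X_1 \times \cdots \times X_K$, which is compact by Tychonoff (a finite product of compacts). The point is to bound the mass any $\pi \in \Pi(P_1,\dots,P_K)$ puts outside $C^\varepsilon$. Using the complement decomposition
\begin{align*}
(X_1 \times \cdots \times X_K) \setminus C^\varepsilon
= \bigcup_{k=1}^K \mathrm{pr}_k^{-1}(X_k \setminus C_k^\varepsilon),
\end{align*}
subadditivity of $\pi$ yields
\begin{align*}
\pi\bigl((X_1 \times \cdots \times X_K) \setminus C^\varepsilon\bigr)
\le \sum_{k=1}^K \pi\bigl(\mathrm{pr}_k^{-1}(X_k \setminus C_k^\varepsilon)\bigr)
= \sum_{k=1}^K (\mathrm{pr}_k)_\# \pi\,(X_k \setminus C_k^\varepsilon).
\end{align*}
Since $(\mathrm{pr}_k)_\# \pi \in P_k$ by the definition of $\Pi(P_1,\dots,P_K)$, each term is at most $\varepsilon/K$, so the total is at most $\varepsilon$. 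As $\varepsilon > 0$ was arbitrary and $C^\varepsilon$ is compact, this is exactly the statement that $\Pi(P_1,\dots,P_K)$ is tight in $\mathcal P(X_1 \times \cdots \times X_K)$; one should also note that the finite product of Polish spaces is Polish, so tightness is the meaningful notion here.

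Honestly, there is no serious obstacle: the only thing requiring a moment of care is making sure the error budget $\varepsilon/K$ is allocated correctly across the $K$ marginals so that the union bound closes at $\varepsilon$, and making explicit that $\Pi(P_1,\dots,P_K)$ may be empty (in which case tightness is vacuous) or nonempty — the argument is uniform over whatever plans exist. If one wanted to phrase things via Prokhorov, one could instead observe that each $\overline{P_k}$ is weakly relatively compact, invoke that a finite product of weakly relatively compact sets of measures has weakly relatively compact image under the coupling constraint; but the direct $\varepsilon$-compact-set argument above is cleaner and self-contained, so that is the route I would write up.
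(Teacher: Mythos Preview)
Your proof is correct and follows essentially the same approach as the paper: both arguments pick compact sets in each factor using tightness of the $P_k$, form the product compact set, and bound the mass of the complement by a union bound over the $K$ coordinate projections using the marginal constraint. The only cosmetic difference is that you allocate $\varepsilon/K$ per coordinate to land exactly at $\varepsilon$, whereas the paper allocates $\varepsilon$ per coordinate and concludes with $K\varepsilon$; this is an inessential rescaling.
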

\begin{lemma}\label{compact}
Let $X_1,\dots,X_K$ be Polish spaces, and let $\mu_k \in \mathcal P(X_k)$ be fixed probability measures for $k=1,\dots,K$.    
Then $\Pi(\mu_1,\dots,\mu_K)$ is compact in $\mathcal P(X_1 \times \cdots \times X_K)$.
\end{lemma}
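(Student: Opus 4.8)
The plan is to combine the tightness result of Lemma~\ref{tight} with Prokhorov's theorem and a closedness argument for the weak topology. First I would record that $\Pi(\mu_1,\dots,\mu_K)$ is nonempty, since the product measure $\mu_1\otimes\cdots\otimes\mu_K$ has $k$-th marginal $\mu_k$ for every $k$ and hence belongs to it. Next, since each $X_k$ is Polish, Ulam's theorem ensures that the singleton $\{\mu_k\}$ is a tight subset of $\mathcal P(X_k)$; applying Lemma~\ref{tight} with $P_k=\{\mu_k\}$ gives that $\Pi(\mu_1,\dots,\mu_K)=\Pi(\{\mu_1\},\dots,\{\mu_K\})$ is tight in $\mathcal P(X_1\times\cdots\times X_K)$. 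Because a finite product of Polish spaces is again Polish, Prokhorov's theorem applies and shows that $\Pi(\mu_1,\dots,\mu_K)$ is relatively compact for the weak topology.

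It then remains to verify that $\Pi(\mu_1,\dots,\mu_K)$ is weakly closed, so that it equals its closure and is therefore compact. To this end, let $(\pi_n)_n$ be a sequence in $\Pi(\mu_1,\dots,\mu_K)$ converging weakly to some $\pi\in\mathcal P(X_1\times\cdots\times X_K)$; sequences suffice here because, the product being Polish, the weak topology on $\mathcal P(X_1\times\cdots\times X_K)$ is metrizable. For each fixed $k$ and each $\varphi\in\mathcal C_b(X_k)$, the function $\varphi\circ\mathrm{pr}_k$ is bounded and continuous on the product, so
\begin{align*}
\int_{X_k}\varphi\,d(\mathrm{pr}_k)_{\#}\pi
&=\int \varphi\circ\mathrm{pr}_k\,d\pi
=\lim_{n}\int \varphi\circ\mathrm{pr}_k\,d\pi_n \\
&=\lim_{n}\int_{X_k}\varphi\,d(\mathrm{pr}_k)_{\#}\pi_n
=\lim_{n}\int_{X_k}\varphi\,d\mu_k
=\int_{X_k}\varphi\,d\mu_k .
\end{align*}
Since this holds for all $\varphi\in\mathcal C_b(X_k)$, we get $(\mathrm{pr}_k)_{\#}\pi=\mu_k$ for every $k$, i.e.\ $\pi\in\Pi(\mu_1,\dots,\mu_K)$. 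Thus the set is closed, and a closed subset of a relatively compact set is compact, which concludes the argument.

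I do not expect a serious obstacle here: the proof is a packaging of standard facts, and the only points requiring a word of care are invoking Ulam's theorem to obtain tightness of each fixed marginal, noting that a finite product of Polish spaces is Polish so that Prokhorov applies in the metrizable form we use, and observing that weak convergence is preserved under pushforward by the continuous projections $\mathrm{pr}_k$. If one prefers to avoid metrizability, the same closedness argument goes through verbatim with nets in place of sequences.
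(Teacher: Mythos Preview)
Your proposal is correct and follows essentially the same route as the paper: invoke Ulam's lemma to get tightness of each singleton $\{\mu_k\}$, apply Lemma~\ref{tight} and Prokhorov's theorem on the Polish product to obtain relative compactness, and then check weak closedness by testing against $\varphi\circ\mathrm{pr}_k$ for $\varphi\in\mathcal C_b(X_k)$. Your additional remarks on nonemptiness and metrizability are harmless refinements of the same argument.
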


We now prove the duality theorem in the non-compact setting.
\begin{theorem}\label{KANTO2}
Let $X_1,\dots,X_K$ be Polish spaces with $K\geq 3$, and let $\mu_k \in \mathcal P(X_k)$ for $k=1,\dots,K$.  
Let $c : X_1 \times \cdots \times X_K \longrightarrow \mathbb{R}$ be a bounded and continuous cost function. Then the following duality formula holds:
\begin{align*}
\mathsf{MOT}_c(\mu_1,\ldots,\mu_K)
= \inf_{\pi \in \Pi(\mu_1,\dots,\mu_K)} I(\pi)
\;=\;
\sup_{\boldsymbol{f} \in \mathcal F_c}J(\boldsymbol{f}).
\end{align*}
\end{theorem}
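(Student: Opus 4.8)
The plan is to deduce Theorem~\ref{KANTO2} from the compact case (Theorem~\ref{KANTO}) by a truncation argument: exhaust each marginal by compact sets, solve the resulting compact MOT problem, and transplant its dual optimizers back to the full product with an error that vanishes as the compacts fill up the $X_k$. By Lemma~\ref{easyy} we already have $\sup_{\boldsymbol f\in\mathcal F_c}J(\boldsymbol f)\le\inf_{\pi\in\Pi(\mu_1,\dots,\mu_K)}I(\pi)$, and Lemmas~\ref{tight}–\ref{compact} ensure $\Pi(\mu_1,\dots,\mu_K)$ is nonempty and weakly compact, so (as $c$ is bounded continuous) $\mathsf{MOT}_c(\mu_1,\dots,\mu_K)$ is finite and attained; the whole task is therefore the reverse inequality $\sup_{\boldsymbol f\in\mathcal F_c}J(\boldsymbol f)\ge\mathsf{MOT}_c(\mu_1,\dots,\mu_K)$. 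A cost-free preliminary reduction: replacing $c$ by $c-\inf c$ changes both the primal and the dual value by the same constant $\inf c$ (a transport plan is a probability measure, and $(f_1,\dots,f_K)\mapsto(f_1+\inf c,f_2,\dots,f_K)$ is a bijection of $\mathcal F_{c-\inf c}$ onto $\mathcal F_c$), so we may assume $0\le c\le M:=\sup c<\infty$.

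Fix $\varepsilon\in(0,1)$. By inner regularity of Borel probability measures on Polish spaces, choose compact $C_k\subseteq X_k$ with $\mu_k(X_k\setminus C_k)<\varepsilon$, and set $\mu_k^\varepsilon:=\mu_k(C_k)^{-1}\,\mu_k|_{C_k}\in\mathcal P(C_k)$. Two facts are needed. First, a \textbf{gluing estimate}: $\mathsf{MOT}_c(\mu_1^\varepsilon,\dots,\mu_K^\varepsilon)\ge\mathsf{MOT}_c(\mu_1,\dots,\mu_K)-M\varepsilon$. Given any $\rho\in\Pi(\mu_1^\varepsilon,\dots,\mu_K^\varepsilon)$, put $\alpha:=\min_k\mu_k(C_k)\in(1-\varepsilon,1]$; then $\alpha\rho$ has mass $\alpha$ and each marginal $\le\mu_k$, the defects $\lambda_k:=\mu_k-(\mathrm{pr}_k)_\#(\alpha\rho)\ge0$ all have the same mass $1-\alpha$, and $\pi:=\alpha\rho+(1-\alpha)^{-(K-1)}\lambda_1\otimes\cdots\otimes\lambda_K$ (the product term omitted when $\alpha=1$) lies in $\Pi(\mu_1,\dots,\mu_K)$ with $\int c\,d\pi\le\int c\,d\rho+M(1-\alpha)\le\int c\,d\rho+M\varepsilon$, using $0\le c\le M$; minimizing over $\rho$ gives the claim. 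Second, by Theorem~\ref{KANTO} on the compact product $C_1\times\cdots\times C_K$ with marginals $\mu_k^\varepsilon$ and cost $c$, we select an optimal dual family $\boldsymbol g^\varepsilon=(g_1^\varepsilon,\dots,g_K^\varepsilon)$ for $(\mu_1^\varepsilon,\dots,\mu_K^\varepsilon)$, which moreover may be taken $c$-conjugate and, after a normalizing shift, uniformly bounded: $\|g_k^\varepsilon\|_\infty\le B_0$ with $B_0=B_0(K,M)$ depending only on $K$ and $M$ — not on $\varepsilon$. (This is the multimarginal form of the classical compact-case fact that normalized $c$-conjugate potentials for a bounded cost are uniformly bounded; it is recorded in Section~\ref{secex}.)

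Now transplant. Define $\bar g_k^\varepsilon:X_k\to\mathbb R$ by $\bar g_k^\varepsilon:=g_k^\varepsilon$ on $C_k$ and $\bar g_k^\varepsilon:=-(K-1)B_0$ on $X_k\setminus C_k$; these are bounded and Borel, hence in $L^1(\mu_k)$. The point is that $\bar{\boldsymbol g}^\varepsilon\in\mathcal F_c$: if every coordinate lies in its $C_k$, then $\sum_k\bar g_k^\varepsilon(x_k)=\sum_k g_k^\varepsilon(x_k)\le c$ by feasibility on the box; while if exactly $m\ge1$ coordinates lie outside, then $\sum_k\bar g_k^\varepsilon(x_k)\le(K-m)B_0-m(K-1)B_0=B_0K(1-m)\le0\le c$, the last step using $c\ge0$. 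For the value, $J(\bar{\boldsymbol g}^\varepsilon)=\sum_k\bigl(\int_{C_k}g_k^\varepsilon\,d\mu_k-(K-1)B_0\,\mu_k(X_k\setminus C_k)\bigr)$, and since $|\mu_k(C_k)-1|<\varepsilon$ and $\|g_k^\varepsilon\|_\infty\le B_0$ we have $\int_{C_k}g_k^\varepsilon\,d\mu_k\ge\int g_k^\varepsilon\,d\mu_k^\varepsilon-B_0\varepsilon$; hence $J(\bar{\boldsymbol g}^\varepsilon)\ge\sum_k\int g_k^\varepsilon\,d\mu_k^\varepsilon-C_1(K,M)\varepsilon=\mathsf{MOT}_c(\mu_1^\varepsilon,\dots,\mu_K^\varepsilon)-C_1(K,M)\varepsilon\ge\mathsf{MOT}_c(\mu_1,\dots,\mu_K)-C_2(K,M)\varepsilon$, by optimality of $\boldsymbol g^\varepsilon$ and the gluing estimate. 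Therefore $\sup_{\boldsymbol f\in\mathcal F_c}J(\boldsymbol f)\ge J(\bar{\boldsymbol g}^\varepsilon)\ge\mathsf{MOT}_c(\mu_1,\dots,\mu_K)-C_2(K,M)\varepsilon$; letting $\varepsilon\downarrow0$ and combining with Lemma~\ref{easyy} yields the duality identity.

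The step I expect to be the main obstacle is the transplantation of the dual optimizers from the compact box back to $X_1\times\cdots\times X_K$ with only $O(\varepsilon)$ loss. A naive extension of $\boldsymbol g^\varepsilon$ (by a fixed constant, or via Tietze) generically violates the additive constraint $\sum_k f_k\le c$ off the box, and a brute-force repair costs an $O(1)$ amount in $J$. The resolution relies on three ingredients working in tandem: the harmless reduction to $c\ge0$ (so that off the box the constraint is simply ``$\le 0$''), the uniform boundedness of normalized $c$-conjugate optimizers in the compact case (so that the negative clamp $-(K-1)B_0$, and hence the incurred loss $O(B_0\varepsilon)$, is controlled independently of how large the $C_k$ are), and the elementary identity $B_0K(1-m)\le0$ for $m\ge1$ that makes the clamped family globally admissible. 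A secondary technical point is the gluing estimate, where taking $\alpha=\min_k\mu_k(C_k)$ rather than the individual masses is what makes the leftover marginals mutually compatible; note that only boundedness — not coercivity — of $c$ is used there, consistent with the hypotheses of the theorem.
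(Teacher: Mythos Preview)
Your proof is correct and follows the same overall architecture as the paper's: truncate to compact sets, invoke Theorem~\ref{KANTO} there, and transplant dual potentials back with an $O(\varepsilon)$ loss. A few technical choices differ and are in fact cleaner than the paper's. You condition the marginals directly, $\mu_k^\varepsilon=\mu_k|_{C_k}/\mu_k(C_k)$, whereas the paper restricts an \emph{optimal} primal plan $\pi_*$ to the box and uses its marginals; your choice avoids tracking $\pi_*$ through the argument. Your gluing estimate (via $\alpha=\min_k\mu_k(C_k)$ and a product of defect measures) replaces the paper's ``glue $\widetilde\pi^0$ with $\pi_*$ on the complement'' construction. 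Most notably, your extension by the constant clamp $-(K-1)B_0$ gives a one-line global admissibility check, while the paper extends each potential to $X_k$ by a $c$-conjugation over the compact box and the verification of global admissibility there is more delicate.

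One point to tighten: the uniform bound $\|g_k^\varepsilon\|_\infty\le B_0(K,M)$, independent of $\varepsilon$, is \emph{not} what Section~\ref{secex} records. Lemma~\ref{uniform-bounds} produces constants $C_k$ that depend on the equicontinuity moduli and diameters of the $C_k$'s, hence implicitly on $\varepsilon$. What you actually need is the elementary normalization argument the paper carries out \emph{inline} in its proof of Theorem~\ref{KANTO2}: take a near-optimal admissible family on the box, pick a point where $\sum_k g_k^\varepsilon\ge -M-1$ (which exists since the dual value is $\ge 0$), shift so that all $g_k^\varepsilon$ agree at that point, and then one pass of $c$-conjugation yields two-sided bounds depending only on $K$ and $M$. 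This is two lines and removes the forward reference; with it, your argument is complete (and you do not need exact attainment on the box, only $\varepsilon$-optimality, so Theorem~\ref{KKK} is not required either).
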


\paragraph{Proof sketch.}

The essential obstruction in the non-compact setting is the absence of global compactness for
both the primal and the dual feasible sets. The argument therefore proceeds by a quantitative
localization of the problem onto compact subsets, followed by a stabilization of dual
potentials that allows passage to the limit.

Let $X_k$ be Polish spaces and let $c\in\mathcal C_b(X_1\times\cdots\times X_K)$. Since the
product space is Polish, the set of couplings $\Pi(\mu_1,\dots,\mu_K)$ is narrowly compact by
Prokhorov’s theorem, and the Kantorovich functional $\pi\mapsto\int c\,d\pi$ is narrowly
continuous. Consequently, the primal infimum is finite and is attained by some
$\pi_*\in\Pi(\mu_1,\dots,\mu_K)$. Fix $\varepsilon>0$. By tightness of the marginal measures, there exist compact sets
$X_k^0\subset X_k$ such that $\mu_k(X_k\setminus X_k^0)\le\varepsilon$ for all $k$. Setting
$X^0:=X_1^0\times\cdots\times X_K^0$, one obtains the quantitative bound
\(
\pi_*(X\setminus X^0)\le K\varepsilon.
\)
Restricting $\pi_*$ to $X^0$ and renormalizing yields a coupling $\pi_*^0$ with marginals
$\mu_k^0$ supported on $X_k^0$. Since the cost is bounded, the discrepancy between the full
primal value and the truncated primal value is controlled linearly in $\varepsilon$. Hence the
original problem is approximated, to arbitrary precision, by the compact Kantorovich problem
posed on $X^0$.

On the compact product $X^0$, Kantorovich duality holds by Theorem~\ref{KANTO}. In particular,
for every $\varepsilon>0$ there exist admissible truncated dual potentials
$(\widetilde f_1^0,\dots,\widetilde f_K^0)$ whose dual value is within $\varepsilon$ of the
compact primal optimum. Exploiting the invariance of the admissibility constraint under the
addition of constants with vanishing total sum, we normalize these potentials at a suitable
reference point in $X^0$. This normalization yields uniform upper bounds on each
$\widetilde f_k^0$ over $X_k^0$, with constants depending only on $\|c\|_\infty$ and $K$.

To obtain canonical representatives and enforce admissibility in an optimal way, we apply a
multi-marginal $c$-conjugation procedure, replacing each truncated potential by the maximal
function compatible with the admissibility inequality given the others. The resulting improved
potentials $(\bar f_1^0,\dots,\bar f_K^0)$ remain admissible on $X^0$, do not decrease the dual
value, and satisfy uniform two-sided bounds depending solely on $\|c\|_\infty$ and $K$. These
bounds permit a natural extension of the improved potentials to the full spaces $X_k$.

Since the extended potentials are uniformly bounded, the difference between their full dual
value and the truncated dual value is controlled by the mass of $X\setminus X^0$, and hence is
of order $\varepsilon$. Consequently, for every $\varepsilon>0$ one constructs a globally
admissible family of dual potentials whose dual value exceeds the primal infimum minus a
quantity that vanishes with $\varepsilon$. Letting $\varepsilon\to0$ yields
\[
\sup_{\boldsymbol f\in\mathcal F_c} J(\boldsymbol f)
\;\ge\;
\inf_{\pi\in\Pi(\mu_1,\dots,\mu_K)} I(\pi).
\]
Combined with the converse inequality established in Lemma~\ref{easyy}, this proves the
Kantorovich duality formula in the non-compact Polish setting. This truncation-normalization-regularization strategy follows the classical approach introduced
by Villani~\cite{villani2003} in the two-marginal case and extends it to the general
multimarginal framework.

\subsection{Proof of Theorem~\ref{KANTO2}}
We define
\begin{align*}
\|c\|_\infty
=
\sup_{(x_1,\dots,x_K)\in X_1\times\cdots\times X_K}
\big|c(x_1,\dots,x_K)\big|.
\end{align*}
In particular, for any probability measure $\pi$ on
$X_1\times\cdots\times X_K$ and any measurable set
$A \subset X_1\times\cdots\times X_K$, one has $\left|\int_A c\, d\pi\right|
\le \|c\|_\infty\, \pi(A)$. Our goal is to reduce the general case to the compact case by a careful truncation 
procedure. Let $\pi_* \in \Pi(\mu_1,\dots,\mu_K)$ be an optimal transport plan for the Kantorovich problem, in the sense that
\begin{align*}
I(\pi_*)
=
\inf_{\pi \in \Pi(\mu_1,\dots,\mu_K)} I(\pi).
\end{align*}
Since $c$ is bounded, the value of the Kantorovich problem $\inf_{\pi \in \Pi(\mu_1,\dots,\mu_K)} I(\pi)$ is finite. The existence of an optimal plan $\pi_*$ follows from the 
compactness of $\Pi(\mu_1,\dots,\mu_K)$ (as shown in Lemma \ref{compact}). Moreover, the functional $I(\pi)= \int c \, d\pi $ is continuous under weak convergence, since \(c\) is bounded and continuous. Indeed, if $\pi_n \rightsquigarrow \pi$ weakly and $c \in \mathcal C_b(X_1\times\cdots\times X_K)$,
then using Definition~\ref{cv} we have $\int_{X_1\times\cdots\times X_K} c \, d\pi_n
\;\longrightarrow\;
\int_{X_1\times\cdots\times X_K} c \, d\pi$. Let $\varepsilon>0$ be such that $K\varepsilon<1$. Since each $X_k$ is Polish, the product space $X_1 \times \cdots \times X_K$ is also Polish (see \cite{bourbaki1998}, Page~195). In particular, each $\mu_k$ is tight (Ulam's lemma). Thus, for each 
$k=1,\dots,K$, there exists a compact subset $X_k^0 \subset X_k$ such that $\mu_k(X_k \setminus X_k^0) \le \varepsilon$. Consequently, the optimal plan $\pi_*$ satisfies
\begin{equation}\label{5}
\pi_*\!\left( (X_1 \times \cdots \times X_K) 
\setminus (X_1^0 \times \cdots \times X_K^0) \right)
\le K\varepsilon .
\end{equation}
Define a new probability measure $\pi^0_*$ supported on $X_1^0 \times \cdots \times X_K^0$ by
\begin{equation}\label{uuu}
\pi^0_*
=
\frac{
\mathbf{1}_{\,X_1^0 \times \cdots \times X_K^0}\,
}{
\pi_*(X_1^0 \times \cdots \times X_K^0)}  \pi_*.
\end{equation}
This is well defined since
$\pi_*(X_1^0 \times \cdots \times X_K^0) > 0$. We define the induced marginals of $\pi_*^0$ by $\mu_k^0 = (\mathrm{pr}_k)_\# \pi_*^0
\text{ for all } k=1,\dots,K$. We denote by
\begin{align*}
\Pi^0(\mu_1^0,\dots,\mu_K^0)
=
\Big\{
\pi^0 \in \mathcal P(X_1^0\times\cdots\times X_K^0)
:\; (\mathrm{pr}_k)_\#\pi^0 = \mu_k^0 \ \text{for all } k
\Big\}
\end{align*}
the set of admissible transference plans on the compact product
$X_1^0\times\cdots\times X_K^0$. Define the truncated functional
\begin{align*}
I^0(\pi^0)
=
\int_{X_1^0 \times \cdots \times X_K^0}
c(x_1,\dots,x_K)\, d\pi^0(x_1,\dots,x_K).
\end{align*}
Let $\widetilde{\pi}^0 \in \Pi^0(\mu_1^0,\dots,\mu_K^0)$ satisfy 
\begin{align*}
I^0(\widetilde{\pi}^0)
=
\inf_{\pi^0 \in \Pi^0(\mu_1^0,\dots,\mu_K^0)} I^0(\pi^0).
\end{align*}
From $\widetilde{\pi}^0$, we construct a plan
$\widetilde{\pi} \in \Pi(\mu_1,\dots,\mu_K)$ in a natural way by gluing together 
$\widetilde{\pi}^0$ with $\pi_*$:
\begin{align*}
\widetilde{\pi}
=
\pi_*(X_1^0 \times \cdots \times X_K^0)\, \widetilde{\pi}^0
\;+\;
\mathbf{1}_{(X_1^0 \times \cdots \times X_K^0)^c}\, \pi_* ,
\end{align*}
refer to Remark \ref{rmq} to see why $\widetilde{\pi} \in \Pi(\mu_1,\dots,\mu_K)$. From the definition of $\widetilde{\pi}$, we have
\begin{align*}
I(\widetilde{\pi}) =
\int_{X_1 \times \cdots \times X_K} c \, d\widetilde{\pi} =
\pi_*(X_1^0 \times \cdots \times X_K^0)
\int_{X_1^0 \times \cdots \times X_K^0} c \, d\widetilde{\pi}^0
\;+\;
\int_{(X_1^0 \times \cdots \times X_K^0)^c} c \, d\pi_*.
\end{align*}
that is,
\begin{align*}
I(\widetilde{\pi})
=
\pi_*(X_1^0 \times \cdots \times X_K^0)\, I^0(\widetilde{\pi}^0)
\;+\;
\int_{(X_1^0 \times \cdots \times X_K^0)^c}
c(x_1,\dots,x_K)\, d\pi_*(x_1,\dots,x_K).
\end{align*}
Since $0 \leq \pi_*(X_1^0 \times \cdots \times X_K^0) \leq 1$, we write
\begin{align*}
I(\widetilde{\pi})
- I^0(\widetilde{\pi}^0)
&=
\big(\pi_*(X_1^0 \times \cdots \times X_K^0)-1\big)\,
I^0(\widetilde{\pi}^0)
\;+\;
\int_{(X_1^0 \times \cdots \times X_K^0)^c} c\, d\pi_* .
\end{align*}
Taking absolute values yields
\begin{align*}
\big|I(\widetilde{\pi}) - I^0(\widetilde{\pi}^0)\big|
&\le
\big(1-\pi_*(X_1^0 \times \cdots \times X_K^0)\big)\,\big|I^0(\widetilde{\pi}^0)\big|
\;+\;
\Big|\int_{(X_1^0 \times \cdots \times X_K^0)^c} c\, d\pi_*\Big|.
\end{align*}
Since $|\int_A c\,d\pi|\le \|c\|_\infty\,\pi(A)$ for any probability measure $\pi$ and any measurable set $A$,
we have in particular
\begin{align*}
\big|I^0(\widetilde{\pi}^0)\big| =\Big|\int_{X_1^0\times\cdots\times X_K^0} c\,d\widetilde{\pi}^0\Big|
= \Big|\int_{X_1^0\times\cdots\times X_K^0} c\,d\widetilde{\pi}^0\Big|
\le \|c\|_\infty
\end{align*}
and 
\begin{align*}
\Big|\int_{(X^0)^c} c\, d\pi_*\Big|
\le \|c\|_\infty\,\pi_*((X^0)^c),
\end{align*}
where $X^0=X_1^0\times\cdots\times X_K^0$. Using moreover \eqref{5}, i.e.
$\pi_*((X^0)^c)\le K\varepsilon$, we obtain
\begin{align*}
\big|I(\widetilde{\pi}) - I^0(\widetilde{\pi}^0)\big|
&\le
\|c\|_\infty\,\pi_*((X^0)^c)
+\|c\|_\infty\,\pi_*((X^0)^c)
\le 2K\|c\|_\infty\,\varepsilon.
\end{align*}
In particular, $ I(\widetilde{\pi})\le I^0(\widetilde{\pi}^0) + 2K\|c\|_\infty\,\varepsilon.$
Thus,
\begin{align*}
\inf_{\pi\in\Pi(\mu_1,\ldots,\mu_K)} I(\pi)
\;\le\;
\inf_{\pi^0\in\Pi^0(\mu_1^0,\ldots,\mu_K^0)} I^0(\pi^0)
\;+\;
2K\|c\|_\infty\,\varepsilon.
\end{align*}
For a $K$-family of functions $(f_1^0,\dots,f_K^0)$ with 
$f_k^0 \in L^1(d\mu_k^0)$, we define the truncated dual functional
\begin{align*}
J^0(f_1^0,\dots,f_K^0)
=
\sum_{k=1}^K \int_{X_k^0} f_k^0 \, d\mu_k^0.
\end{align*}
By Theorem \ref{KANTO} (Kantorovich duality on the compact product
$X_1^0 \times \cdots \times X_K^0$), we know that $\inf I^0
=
\sup J^0$. Thus, for any $\varepsilon>0$, there exists an admissible family 
$(\widetilde{f}_1^0,\dots,\widetilde{f}_K^0)$ such that $J^0(\widetilde{f}_1^0,\dots,\widetilde{f}_K^0)
>
\sup J^0 - \varepsilon$. Our problem is to construct from the truncated family 
$(\widetilde{f}_1^0,\dots,\widetilde{f}_K^0)$ a new family 
$(f_1,\dots,f_K)$ which is effective for the maximization 
of the full dual functional
\begin{align*}
J(f_1,\dots,f_K)
=
\sum_{k=1}^K \int_{X_k} f_k \, d\mu_k.
\end{align*}
It will be useful to ensure that the admissibility inequality
\begin{align}\label{admiss}
\widetilde{f}_1^0(x_1) + \cdots + \widetilde{f}_K^0(x_K)
\;\le\;
c(x_1,\dots,x_K)
\end{align}
is valid for all 
\(
(x_1,\dots,x_K) \in X_1^0 \times \cdots \times X_K^0.
\)
Without loss of generality, we assume that $\varepsilon\le 1$. Since $c$ is bounded, we have
$-\|c\|_\infty \le c$ on $X_1^0\times\cdots\times X_K^0$. Hence the constant family
$(-\|c\|_\infty,0,\ldots,0)$ is admissible for the truncated dual problem, and therefore
$\sup J^0 \ge -\|c\|_\infty$. Consequently,
\begin{align*}
J^0(\widetilde{f}_1^0,\dots,\widetilde{f}_K^0)
\ge \sup J^0 - \varepsilon 
\ge -\|c\|_\infty-\varepsilon 
\ge -\|c\|_\infty-1.
\end{align*}
For any $\pi^0 \in \Pi^0(\mu_1^0,\dots,\mu_K^0)$, we can write
\begin{align*}
J^0(\widetilde{f}_1^0,\dots,\widetilde{f}_K^0)
=\sum_{k=1}^K \int_{X_k^0} \widetilde{f}_k^0\, d\mu_k^0
=\int_{X_1^0\times\cdots\times X_K^0}
\big(\widetilde{f}_1^0(x_1)+\cdots+\widetilde{f}_K^0(x_K)\big)
\, d\pi^0(x_1,\dots,x_K).
\end{align*}
Since this last integral is $\ge -\|c\|_\infty-1$, we deduce that there exists at least one point
$(x_1^0,\dots,x_K^0)\in X_1^0\times\cdots\times X_K^0$ such that
$\widetilde{f}_1^0(x_1^0)+\cdots+\widetilde{f}_K^0(x_K^0)
\ge -\|c\|_\infty-1$. Define $a_k=\widetilde{f}_k^0(x_k^0) \text{ for } k=1,\dots,K$, and let $m=\frac{1}{K}\sum_{k=1}^K a_k$. Since $\widetilde{f}_1^0(x_1^0)+\cdots+\widetilde{f}_K^0(x_K^0)
\ge -\|c\|_\infty-1$, it follows that $m\ge -(\|c\|_\infty+1)/K$.
We now replace $\widetilde{f}_k^0(x_k)$ by $\widetilde{f}_k^0(x_k)+s_k$ where $s_k=m-a_k$. Since
\begin{equation*}
\sum_{k=1}^K s_k=\sum_{k=1}^K (m-a_k)=Km-\sum_{k=1}^K a_k = 0,
\end{equation*}
the admissibility condition is preserved:
\begin{equation*}
\sum_{k=1}^K (\widetilde{f}_k^0(x_k)+s_k)
=
\sum_{k=1}^K \widetilde{f}_k^0(x_k)
+\sum_{k=1}^K s_k
=
\sum_{k=1}^K \widetilde{f}_k^0(x_k)
\le c(x_1,\dots,x_K).
\end{equation*}
Similarly, the dual functional is unchanged:
\begin{equation*}
J^0(\widetilde{f}_1^0+s_1,\dots,\widetilde{f}_K^0+s_K)
=
\sum_{k=1}^K\int_{X_k^0}\widetilde{f}_k^0\, d\mu_k^0
+\sum_{k=1}^K s_k
=
J^0(\widetilde{f}_1^0,\dots,\widetilde{f}_K^0).
\end{equation*}
Finally, evaluating $(\widetilde{f}_1^0,\ldots,\widetilde{f}_K^0) $ at the reference point 
$(x_1^0,\dots,x_K^0)$ gives
\begin{equation*}
\widetilde{f}_k^0(x_k^0)+(m-a_k)
=
m
\qquad\text{for all } k=1,\dots,K.
\end{equation*}
After adding the constants $s_k$, we still denote the new functions by $\widetilde{f}_k^0$. Combined with $m\ge -(\|c\|_\infty+1)/K$, this yields the uniform lower bound
\begin{equation}\label{rr}
\widetilde{f}_k^0(x_k^0)\ge -(\|c\|_\infty+1)/K,\qquad k=1,\dots,K.
\end{equation}
This implies that, for all $x_k\in X_k^0$,
\begin{align*}
\widetilde{f}_k^0(x_k)
&\leq
c(x_1^0,\dots,x_{k-1}^0,\,x_k,\,x_{k+1}^0,\dots,x_K^0)
-
\sum_{l\ne k}\widetilde{f}_l^0(x_l^0)
\\
&\leq
c(x_1^0,\dots,x_{k-1}^0,\,x_k,\,x_{k+1}^0,\dots,x_K^0)
+
\frac{(K-1)(\|c\|_\infty+1)}{K}.
\end{align*}
Therefore,
\begin{align}\label{tild}
\widetilde{f}_k^0(x_k)
\;\le\; 2\|c\|_\infty +1 .
\end{align}
For each index $k\in\{1,\dots,K\}$, we define a new function $\bar{f}_k^0 : X_k \longrightarrow \mathbb{R}\cup\{+\infty\}$ by
\begin{equation*}
\bar{f}_k^0(x_k)
=
\inf_{(x_l)_{l\ne k}\in X_1^0\times\cdots\times X_K^0}
\Big\{
c(x_1,\dots,x_K)
-
\sum_{l\ne k}\widetilde{f}_l^0(x_l)
\Big\}.
\end{equation*}
From the inequality \eqref{admiss}, we see that $\widetilde{f}_k^0 \le \bar{f}_k^0\text{ on }X_k^0$. Indeed, for any 
$(x_1,\dots,x_K)\in X_1^0\times\cdots\times X_K^0$,
\begin{align*}
\widetilde{f}_k^0(x_k)
\;\le\;
c(x_1,\dots,x_K)
-
\sum_{l\ne k}\widetilde{f}_l^0(x_l).
\end{align*}
But the right-hand side holds for every choice of 
$(x_l)_{l\ne k} \in X_1^0\times\cdots\times X_K^0$.
Therefore, taking the infimum over all such choices gives
\begin{equation*}
\widetilde{f}_k^0(x_k)
\;\le\;
\inf_{(x_l)_{l\ne k}}
\Big\{ c(x_1,\dots,x_K) - \sum_{l\ne k}\widetilde{f}_l^0(x_l) \Big\}
=
\bar{f}^0_k(x_k),
\qquad x_k\in X_k^0.
\end{equation*}
This implies $J^0(\widetilde{f}_1^0,\ldots,\widetilde{f}_{k-1}^0,\bar{f}_k^0,\widetilde{f}_{k+1}^0,\ldots,\widetilde{f}_K^0)\ge J^0(\widetilde{f}_1^0,\ldots,\widetilde{f}_K^0)$. Iterating this procedure for $k=1,\dots,K$ proves that the fully improved family satisfies $J^0(\bar{f}_1^0,\dots,\bar{f}_K^0)
\;\ge\;
J^0(\widetilde f_1^0,\dots,\widetilde f_K^0)$. Moreover, by definition of $\bar{f}_k^0$ and using \eqref{rr}, we obtain, for all $x_k\in X_k$,
\begin{align*}
\bar{f}_k^0(x_k)
\;\le\;
c(x_1^0,\dots,x_{k-1}^0,x_k,x_{k+1}^0,\dots,x_K^0)
-
\sum_{l\ne k}\widetilde{f}_l^0(x_l^0) \leq 2\|c\|_\infty +1 .
\end{align*}
To justify the lower bound on $\bar{f}_k^0$, recall that
\begin{align*}
\bar{f}_k^0(x_k)
=
\inf_{(x_l)_{l\ne k}\in X_1^0\times\cdots\times X_K^0}
\Big\{
c(x_1,\dots,x_K) - \sum_{l\ne k} \widetilde{f}_l^0(x_l)
\Big\}.
\end{align*}
From \eqref{tild}, for every $l\neq k$ and every 
$x_l\in X_l^0$, $\widetilde{f}_l^0(x_l) \le 2\|c\|_\infty + 1$. Moreover, since the cost function is bounded, we have $-\|c\|_\infty \le c(x_1,\dots,x_K) \le \|c\|_\infty$. Therefore, for any choice of $(x_l)_{l\neq k}$, 
\begin{align*}
c(x_1,\dots,x_K) - \sum_{l\ne k} \widetilde{f}_l^0(x_l)
\;\ge\;
-\|c\|_\infty - (K-1)(2\|c\|_\infty+1)
=
-(2K-1)\|c\|_\infty-(K-1).
\end{align*}
Since this bound is valid for all such choices, it remains valid
when we take their infimum. Using the definition of $\bar{f}_k^0$, we conclude that $\bar{f}_k^0(x_k)
\ge
-(2K-1)\|c\|_\infty-(K-1)
 \text{ for all } \, x_k\in X_k$. Hence, $|\bar f_k^0(x_k)| \le C$, where $C$ depends only on $K$ and $\|c\|_\infty$. Once we have these bounds, we are almost done. Indeed,
\begin{align*}
J(\bar{f}_1^0,\dots,\bar{f}_K^0)
= \sum_{k=1}^K \int_{X_k} \bar f_k^0 \, d\mu_k
= \int_{X_1\times\cdots\times X_K}
\Big( \sum_{k=1}^K \bar f_k^0(x_k) \Big)\, d\pi_* .
\end{align*}
Decomposing the integral according to the product set $X_1^0\times\cdots\times X_K^0$, we obtain
\begin{align*}
J(\bar{f}_1^0,\dots,\bar{f}_K^0)
&=
\pi_*\bigl(X_1^0\times\cdots\times X_K^0\bigr)
\int_{X_1^0\times\cdots\times X_K^0}
\Big( \sum_{k=1}^K \bar f_k^0(x_k) \Big)\, d\pi_*^0
\\
&\qquad+
\int_{(X_1^0\times\cdots\times X_K^0)^c}
\Big( \sum_{k=1}^K \bar f_k^0(x_k) \Big)\, d\pi_* .
\end{align*}
Since $\pi^{*}\!\left( (X_{1}^{0}\times\cdots\times X_{K}^{0})^{c} \right) \leq K\varepsilon
\text{ and }
\lvert \,\bar{f}_{k}^{0}\,\rvert \leq C \;\text{for all } k$, we obtain
\begin{align*}
\Big|\int_{(X_1^0\times\cdots\times X_K^0)^c}
\Big( \sum_{k=1}^K \bar f_k^0(x_k) \Big)\, d\pi_*\Big|
\le K^2 C\,\varepsilon .
\end{align*}
Thus,
\begin{align*}
{J(\bar{f}_1^0,\dots,\bar{f}_K^0)-J^0(\bar{f}_1^0,\dots,\bar{f}_K^0)}  
&=\big(\pi_*(X_1^0\times\!\cdots\!\times X_K^0)-1\big)
\int_{X_1^0\times\!\cdots\!\times X_K^0}
\Big( \sum_{k=1}^K \bar f_k^0(x_k) \Big)\, d\pi_*^0 \\
&\qquad+ 
\int_{(X_1^0\times\!\cdots\!\times X_K^0)^c}
\Big( \sum_{k=1}^K \bar f_k^0(x_k) \Big)\, d\pi_* .
\end{align*}
Hence $\bigl|J(\bar{f}_1^0,\dots,\bar{f}_K^0)
- J^0(\bar f_1^0,\dots,\bar f_K^0)\bigr|
\le 2K^2 C\,\varepsilon,$
and in particular $J(\bar{f}_1^0,\dots,\bar{f}_K^0)
\ge
J^0(\bar f_1^0,\dots,\bar f_K^0)
- 2K^2 C\,\varepsilon.$
Since $J^0(\bar f_1^0,\dots,\bar f_K^0) \ge \inf I^0 - \varepsilon$ and $\inf I^0 \ge \inf I - 2K\|c\|_\infty\,\varepsilon,$ 
we deduce
\begin{align*}
J(\bar{f}_1^0,\dots,\bar{f}_K^0)
\ge 
\inf I - \bigl(2K\|c\|_\infty + 1 + 2K^2 C\bigr)\varepsilon .
\end{align*}
Since $\varepsilon>0$ is arbitrary, letting $\varepsilon \to 0$ yields $\sup J(f_1,\ldots,f_K) \;\ge\; \inf I(\pi)$ which is the desired result. Combining with the easy reverse inequality proved in Lemma~\ref{easyy}, we conclude that the Kantorovich duality formula
\begin{align*}
\inf_{\pi \in \Pi(\mu_1,\dots,\mu_K)} I(\pi)
\;=\;
\sup_{\boldsymbol{f} \in \mathcal F_c}J(\boldsymbol{f})
\end{align*}
holds in the general (non-compact) setting as well. \hfill $\Box$

\begin{remark}\label{rmq}
\noindent
\item To verify that the plan \(\widetilde{\pi}\) defined by $\widetilde{\pi}=\pi_*(X^0)\,\widetilde{\pi}^0+\mathbf{1}_{(X^0)^c}\,\pi_*
\text{ where } X^0=X_1^0\times\cdots\times X_K^0$
belongs to \(\Pi(\mu_1,\dots,\mu_K)\), it is enough to check that each of its marginals coincides with the corresponding $\mu_k$. Fix a measurable set \(A\subset X_k\). Using the definition of
\(\widetilde{\pi}\), we write
\begin{eqnarray*}
(\widetilde{\pi})_k(A)
&=& \widetilde{\pi}(X_1\times\cdots\times A\times\cdots\times X_K)
\\
&=& \pi_*(X^0)\,
     \widetilde{\pi}^0\!\big((X_1\times\cdots\times A\times\cdots\times X_K)\cap X^0\big)
\\
&&
   +\pi_*\!\big((X^0)^c\cap(X_1\times\cdots\times A\times\cdots\times X_K)\big).
\end{eqnarray*}
Since \(\widetilde{\pi}^0\) is supported on \(X^0\) and has \(k\)-th marginal \(\mu_k^0\),
the first term equals
\begin{eqnarray*}
\pi_*(X^0)\,\mu_k^0(A\cap X_k^0)
&=&\pi_*(X^0)\,
\frac{\pi_*\!\big((X_1^0\times\cdots\times(A\cap X_k^0)\times\cdots\times X_K^0)\big)}{\pi_*(X^0)}\\
&=&\pi_*\!\big((X_1\times\cdots\times A\times\cdots\times X_K)\cap X^0\big)
\end{eqnarray*}
where we used \eqref{uuu}. Therefore,
\begin{align*}
(\widetilde{\pi})_k(A)
&=
\pi_*\big((X_1\times\cdots\times A\times\cdots\times X_K)\cap X^0\big)
\\
&\qquad+
\pi_*\big((X^0)^c\cap(X_1\times\cdots\times A\times\cdots\times X_K)\big).
\end{align*}
The two sets in the last line form a disjoint partition of the cylinder
\(X_1\times\cdots\times A\times\cdots\times X_K\), hence $(\widetilde{\pi})_k(A)
=\pi_*\big(X_1\times\cdots\times A\times\cdots\times X_K\big)
=\mu_k(A)$
because \(\pi_*\in\Pi(\mu_1,\dots,\mu_K)\). Thus, each marginal of \(\widetilde{\pi}\) is
equal to \(\mu_k\), and we conclude that \(\widetilde{\pi}\in\Pi(\mu_1,\dots,\mu_K)\).

\end{remark}

\begin{remark}
\noindent
\item 
Although Theorem~\ref{KANTO} is stated under the standing assumption that the cost
function satisfies \(c\ge 0\), this restriction is inessential on compact product
spaces. In fact, the duality identity remains valid for any bounded and continuous
cost \(c: X_1\times\cdots\times X_K\to\mathbb{R}\), by a simple invariance argument
under additive constants. 
Let \(M\ge -\inf c\) and define the shifted cost \(\widetilde c = c+M\), which is
nonnegative on \(X_1\times\cdots\times X_K\). Since every
\(\pi\in\Pi(\mu_1,\ldots,\mu_K)\) is a probability measure, one has
\begin{align*}
\int \widetilde c\,d\pi
=
\int c\,d\pi + M,
\end{align*}
and therefore
\begin{align*}
\inf_{\pi\in\Pi(\mu_1,\ldots,\mu_K)} \int \widetilde c\,d\pi
=
\inf_{\pi\in\Pi(\mu_1,\ldots,\mu_K)} \int c\,d\pi + M.
\end{align*}
The same translation property holds for the dual problem. Indeed, if
\((f_1,\ldots,f_K)\in\mathcal{F}_c\), then
\((f_1+M,f_2,\ldots,f_K)\in\mathcal{F}_{\widetilde c}\), and since \(\mu_1\) is a
probability measure, $J(f_1+M,f_2,\ldots,f_K)
=
J(f_1,\ldots,f_K)+M.$
Conversely, any admissible family
\((g_1,\ldots,g_K)\in\mathcal{F}_{\widetilde c}\) yields
\((g_1-M,g_2,\ldots,g_K)\in\mathcal{F}_c\) with $J(g_1-M,g_2,\ldots,g_K)
=
J(g_1,\ldots,g_K)-M.$
Consequently,
\begin{align*}
\sup_{\boldsymbol f\in\mathcal{F}_{\widetilde c}} J(\boldsymbol f)
=
\sup_{\boldsymbol f\in\mathcal{F}_c} J(\boldsymbol f)+M.
\end{align*}
Applying Theorem~\ref{KANTO} to the nonnegative cost \(\widetilde c\) therefore yields,
upon subtracting \(M\), the duality identity for the original bounded continuous cost
\(c\). 
In particular, throughout the proof of Theorem~\ref{KANTO2}, whenever
Theorem~\ref{KANTO} is applied on a compact truncation of the cost, one may first
replace the truncated cost by \(c+M\) (with \(M=\|c\|_\infty\)) so as to enforce the
nonnegativity assumption, and subsequently undo the shift using the identities
above.
\end{remark}

\section{Optimal dual potentials via \texorpdfstring{$c$}{}-conjugacy}\label{secex}
In this section, we study the existence and structural properties of optimal dual potentials for the
multimarginal Kantorovich problem. Our aim is to show that maximizers of the dual problem
always exist and that they can be chosen in a $c$-conjugate form. We begin with the compact case,
where the argument relies on equicontinuity estimates and the Arzelà-Ascoli theorem, and later
extend the result to general non-compact Polish spaces.
\subsection{Compact case}
We consider the multimarginal Kantorovich dual problem $(\mathsf{MOT}\!-\!\mathsf{DP})$ in the compact setting,
with a continuous cost function
$c : X_1 \times \cdots \times X_K \to \mathbb{R}_+$. Our objective is to prove that this supremum is attained, 
and that the corresponding optimal potentials can be chosen to be $c$-conjugate 
for every $k = 1, \ldots, K$ (see Theorem~\ref{KKK}).
\begin{definition}[$c$-conjugate, inspired by \cite{pass2022}]\label{con}
Let $c : X_1\times\ldots\times X_K \longrightarrow \mathbb{R}_+$ be a continuous cost function, and let $ \boldsymbol{f} = (f_1, \dots, f_K)$ be a family of real-valued functions with $f_k : X_k \to \mathbb{R}$ for each $k = 1, \dots, K$. The $c$-conjugate of $\boldsymbol{f}$ in the $k$-th variable is defined by
\begin{align*}
f_k^{c}(x_k)
=
\inf_{(x_1,\dots,x_{k-1},\,x_{k+1},\dots,x_K)}
\Big\{
c(x_1,\ldots,x_K)
-
\sum_{l\ne k} f_l(x_l)
\Big\},
\qquad \forall x_k \in X_k.
\end{align*}
\end{definition}
We define the space of $c$-conjugate functions on $X_k$ by
\begin{multline*}
c\text{-conj}(X_k)
=
\Big\{
f_k : X_k \to \mathbb{R}
\;\Big|\;
\exists\, f_l : X_l \to \mathbb{R},\ l \ne k\quad\ 
\text{such that }
\\
f_k(x_k)
=
\inf_{(x_1,\dots,x_{k-1},\,x_{k+1},\dots,x_K)}
\big\{
c(x_1,\ldots,x_K)
-
\sum_{l\ne k} f_l(x_l)
\big\}
\Big\}.
\end{multline*}
The notion of $c$-conjugacy plays a central role in the structure of optimal dual potentials. The next theorem shows that such 
$c$-conjugate families are not only natural, but in fact sufficient to attain the maximum in the 
dual problem.
\begin{theorem}\label{KKK}
Suppose that $X_1,\ldots,X_K$ are compact metric spaces and that $c : X_1\times\ldots\times X_K \longrightarrow \mathbb{R}_+$
is continuous. Then, there exists a solution $\boldsymbol{f}=(f_1,\dots,f_K)$
to problem $(\mathsf{MOT}\!-\!\mathsf{DP})$ with $f_k \in c\text{-conj}(X_k)
 \text{ for each } k = 1,\dots,K$. In particular,
\begin{align*}
\mathsf{MOT}_c(\mu_1,\ldots,\mu_K)=\max_{\;\sum_{k=1}^K f_k(x_k)\,\le\, c(x_1,\ldots,x_K)}\; J(\boldsymbol{f})
=
\max_{(f_1,\dots,f_K)\in\prod_{k=1}^{K} c\text{-conj}(X_k)}
\;
J(\boldsymbol{f}).
\end{align*}
\end{theorem}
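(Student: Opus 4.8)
The plan is to transpose the classical two-marginal argument --- maximizing sequence, $c$-conjugation for regularity, Arzel\`a--Ascoli compactness, and a final conjugation to land in canonical form --- to the $K$-marginal cost. The Fenchel--Rockafellar computation carried out in the proof of Theorem~\ref{KANTO} already gives $\sup\{J(\boldsymbol f):\boldsymbol f\in\mathcal F_c\cap\prod_k\mathcal C_b(X_k)\}=\mathsf{MOT}_c(\mu_1,\dots,\mu_K)$, so I would begin from a maximizing sequence $\boldsymbol f^{(n)}=(f_1^{(n)},\dots,f_K^{(n)})$ of \emph{bounded continuous} admissible families. To each I apply one $c$-conjugation sweep: replace $f_1^{(n)}$ by its $c$-conjugate in the first variable relative to $(f_2^{(n)},\dots,f_K^{(n)})$, then replace $f_2^{(n)}$ by its $c$-conjugate in the second variable relative to the \emph{updated} family, and so on through variable $K$. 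Exactly as in the computation appearing in the proof of Theorem~\ref{KANTO2}, each replacement keeps the family admissible (the conjugate is an infimum, hence dominated by every individual term $c-\sum_{l\ne k}(\cdot)$) and does not decrease $J$ (the new component pointwise dominates the old one and $\mu_k\ge 0$). Write $\bar{\boldsymbol f}^{(n)}$ for the output: each of its components, being of the form $\inf_{(x_l)_{l\ne k}}\{c(x_1,\dots,x_K)-\sum_{l\ne k}(\cdot)\}$, inherits the uniform modulus of continuity $\omega_c$ of $c$ on the compact product (a bound that depends only on $c$, not on the family), is therefore continuous and bounded, and in particular lies in $c\text{-conj}(X_k)$.

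\textbf{Normalization and uniform bounds.} Since $J(\bar{\boldsymbol f}^{(n)})\ge J(\boldsymbol f^{(n)})\to\mathsf{MOT}_c$, I may assume $J(\bar{\boldsymbol f}^{(n)})\ge-\|c\|_\infty-1$ for all $n$. Integrating $\sum_k\bar f_k^{(n)}(x_k)$ against any coupling and using that this continuous function attains its maximum on the compact product yields, as in the proof of Theorem~\ref{KANTO2}, a point $(x_1^{\ast,n},\dots,x_K^{\ast,n})$ with $\sum_k\bar f_k^{(n)}(x_k^{\ast,n})\ge-\|c\|_\infty-1$. Adding to each $\bar f_k^{(n)}$ the constant $m^n-a_k^n$, where $a_k^n=\bar f_k^{(n)}(x_k^{\ast,n})$ and $m^n=\tfrac1K\sum_l a_l^n$, leaves $J$, admissibility and the modulus $\omega_c$ unchanged (the constants sum to zero) and arranges $\bar f_k^{(n)}(x_k^{\ast,n})=m^n\ge-(\|c\|_\infty+1)/K$. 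Admissibility evaluated at $(x_1^{\ast,n},\dots,x_k,\dots,x_K^{\ast,n})$ then gives the uniform upper bound $\bar f_k^{(n)}\le 2\|c\|_\infty+1$, while the modulus of continuity together with the controlled value at $x_k^{\ast,n}$ gives the uniform lower bound $\bar f_k^{(n)}\ge-(\|c\|_\infty+1)/K-\omega_c(\mathrm{diam}(X_k))$; hence $\|\bar f_k^{(n)}\|_\infty\le R_k$ with $R_k$ independent of $n$.

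\textbf{Compactness and canonical form.} On each compact metric space $X_k$ the sequence $(\bar f_k^{(n)})_n$ is uniformly bounded and equicontinuous, so Arzel\`a--Ascoli, applied successively for $k=1,\dots,K$ with nested extractions, produces one subsequence along which $\bar f_k^{(n)}\to f_k$ uniformly for every $k$. Passing to the limit in $\sum_k\bar f_k^{(n)}(x_k)\le c(x_1,\dots,x_K)$ shows $\boldsymbol f=(f_1,\dots,f_K)\in\mathcal F_c$, and uniform convergence against the finite measures $\mu_k$ gives $J(\boldsymbol f)=\lim_n J(\bar{\boldsymbol f}^{(n)})=\mathsf{MOT}_c$, so $\boldsymbol f$ attains the dual supremum (Theorem~\ref{KANTO}). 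Finally I run one more $c$-conjugation sweep, now on the maximizer $\boldsymbol f$ itself: the output $\hat{\boldsymbol f}=(\hat f_1,\dots,\hat f_K)$ satisfies $\hat f_k\in c\text{-conj}(X_k)$ by construction, is admissible, and has $J(\hat{\boldsymbol f})\ge J(\boldsymbol f)=\mathsf{MOT}_c$; since $\hat{\boldsymbol f}\in\mathcal F_c$ forces $J(\hat{\boldsymbol f})\le\mathsf{MOT}_c$, equality holds, giving the claimed dual attainment and the two displayed identities.

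\textbf{Main obstacle.} I expect the delicate point to be the bookkeeping of the multimarginal conjugation sweep: one must conjugate against the \emph{current} iterate at each step, and verify that admissibility survives and $J$ is non-decreasing \emph{throughout} the sweep, as well as that all infima are finite (which is why it is convenient to start from bounded continuous families, so that every intermediate component is again bounded and continuous). The second genuinely load-bearing point is that the compactness bounds be uniform in $n$; this is exactly where the zero-sum constant normalization and the modulus of continuity inherited from $c$ are indispensable, since without uniform two-sided bounds and equicontinuity Arzel\`a--Ascoli is unavailable and no maximizer can be extracted.
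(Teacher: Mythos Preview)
Your proposal is correct and follows essentially the same architecture as the paper's proof: maximizing sequence, $c$-conjugation sweep to gain the modulus $\omega_c$, zero-sum normalization, Arzel\`a--Ascoli, and passage to the limit. Two implementation choices differ, and both of yours are slightly cleaner. First, for uniform two-sided bounds the paper normalizes via $\min_{X_1}f_1^n=0$ and then invokes a separate technical lemma (Lemma~\ref{uniform-bounds}) that combines the maximizing property with oscillation control; you instead reuse the reference-point normalization from the proof of Theorem~\ref{KANTO2}, which gives $\bar f_k^{(n)}(x_k^{\ast,n})=m^n$ with $m^n$ pinched between $-(\|c\|_\infty+1)/K$ and $\|c\|_\infty/K$, so the upper bound comes from admissibility and the lower bound from $m^n-\omega_c(\mathrm{diam}\,X_k)$ directly. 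Second, to land in $\prod_k c\text{-conj}(X_k)$ the paper argues that $c$-conjugacy is stable under uniform convergence of the generating functions on compact sets, whereas you simply run one more conjugation sweep on the maximizer; this avoids the stability-of-infima argument altogether and is harmless since the sweep cannot decrease $J$ and weak duality caps it at $\mathsf{MOT}_c$.
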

The maximization can be restricted, without loss of generality, to $c$-conjugate potentials.
Before continuing, we state two auxiliary lemmas, proved in Appendix~\ref{contequi}, which
provide the two key compactness ingredients needed in the compact case. The first lemma
shows that the $c$-conjugation operator preserves equicontinuity, with a modulus depending
only on the cost function. The second lemma establishes uniform $L^\infty$ bounds for
normalized maximizing sequences, provided equicontinuity is available.

\begin{lemma}\label{equi}
Let $(X_k, d_k)$ be compact metric spaces for $k = 1, \dots, K$, and let $c : X_1\times\ldots\times X_K \longrightarrow \mathbb{R}$ be continuous. Fix an index $k\in\{1,\dots,K\}$, and let $f_l : X_l \to \mathbb{R}$ be arbitrary
functions for all $l \neq k$. Define
\begin{align*}
f_k(x_k)
=
\inf_{(x_l)_{l\ne k}}
\Big\{
c(x_1,\ldots,x_K)
-
\sum_{l\ne k} f_l(x_l)
\Big\}.
\end{align*}
Then each family $(f_k)_k \subset \mathcal{C}(X_k)$ 
is equicontinuous on $X_k$ with a common modulus of continuity~$\omega$, that is,
\begin{align*}
|f_k(x_k) - f_k(x_k')|
\le 
\omega\!\big(d_k(x_k,x_k')\big), 
\forall\, x_k,x_k'\in X_k.
\end{align*}
\end{lemma}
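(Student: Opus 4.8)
The plan is to extract a modulus of continuity directly from $c$ and push it through the infimum that defines $f_k$. Since each $X_j$ is compact, the product $X_1\times\cdots\times X_K$ is compact, so the continuous cost $c$ is in fact \emph{uniformly} continuous on it. Equip the product with the metric $d(\mathbf x,\mathbf y)=\max_{1\le j\le K} d_j(x_j,y_j)$ and let $\omega:[0,\infty)\to[0,\infty)$ be a nondecreasing modulus of uniform continuity for $c$, i.e.\ $|c(\mathbf x)-c(\mathbf y)|\le\omega\big(d(\mathbf x,\mathbf y)\big)$ for all $\mathbf x,\mathbf y$, with $\omega(t)\to 0$ as $t\to 0^+$. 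Crucially, $\omega$ depends only on $c$ and the metrics $d_j$, not on the auxiliary functions $(f_l)_{l\ne k}$; this is precisely what yields a modulus common to the whole family.

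Next I would fix $k$ and an arbitrary family $(f_l)_{l\ne k}$, and compare $f_k$ at two points $x_k,x_k'\in X_k$. For any configuration $(x_l)_{l\ne k}$ of the remaining coordinates, the points $\mathbf x=(x_1,\dots,x_k,\dots,x_K)$ and $\mathbf x'=(x_1,\dots,x_k',\dots,x_K)$ differ only in the $k$-th slot, so $d(\mathbf x,\mathbf x')=d_k(x_k,x_k')$ and hence
\[
c(\mathbf x)-\sum_{l\ne k}f_l(x_l)\;\le\;\Big(c(\mathbf x')-\sum_{l\ne k}f_l(x_l)\Big)+\omega\big(d_k(x_k,x_k')\big).
\]
Taking the infimum over all $(x_l)_{l\ne k}$ on both sides, with the additive constant $\omega\big(d_k(x_k,x_k')\big)$ pulled out of the infimum on the right, gives $f_k(x_k)\le f_k(x_k')+\omega\big(d_k(x_k,x_k')\big)$. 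Exchanging the roles of $x_k$ and $x_k'$ yields the reverse inequality, so $|f_k(x_k)-f_k(x_k')|\le\omega\big(d_k(x_k,x_k')\big)$, which is exactly the asserted equicontinuity estimate, with the same $\omega$ valid for every choice of $(f_l)_{l\ne k}$.

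The only point requiring a word of care is well-definedness: a priori the infimum could equal $-\infty$ if some $f_l$ is unbounded above. One either restricts to families with $\sup_{X_l} f_l<\infty$ for $l\ne k$ (automatic in all applications, where the $f_l$ are themselves bounded $c$-conjugates), so that $f_k>-\infty$; or one simply observes that the inequality above already forces $f_k$ to be either everywhere $-\infty$ or everywhere finite, and in the finite case the bound $|f_k(x_k)-f_k(x_k')|\le\omega\big(d_k(x_k,x_k')\big)$ shows $f_k\in\mathcal C(X_k)$. No genuine obstacle arises here: the entire content is that uniform continuity of $c$ on the compact product supplies a coordinatewise modulus insensitive to the frozen coordinates, and that an infimum of a family of functions shifted by a common constant inherits that modulus.
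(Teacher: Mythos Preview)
Your proof is correct and follows essentially the same approach as the paper: extract a modulus of uniform continuity for $c$ on the compact product, observe that varying only the $k$-th coordinate gives a bound $|c(\mathbf x)-c(\mathbf x')|\le\omega(d_k(x_k,x_k'))$, and then pass this modulus through the infimum by the standard ``infimum of a family with common modulus'' argument. The only cosmetic differences are your choice of the max-metric on the product (the paper uses the sum-metric) and your extra remark on the $-\infty$ case, neither of which affects the substance.
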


\begin{lemma}\label{uniform-bounds}
Let $X_1,\dots,X_K$ be compact metric spaces and let
$c\in C(X_1\times\cdots\times X_K)$ with $\|c\|_\infty<\infty$.
Let $(f_1^n,\dots,f_K^n)_{n\ge1}\subset\mathcal F_c$ be a maximizing sequence for
$\sup_{\mathcal F_c} J$ such that, for all $n$ and all $k$, $f_k^n=(f_k^n)^c$,
and assume the normalization
\begin{align*}
\min_{X_1} f_1^n=0,\text{ for all }n.
\end{align*}
Assume moreover that, for each $k$, the family $(f_k^n)_{n\ge1}$ is equicontinuous on $X_k$. Then for each $k\in\{1,\dots,K\}$ there exists a constant $C_k<\infty$,
independent of $n$, such that
\begin{align*}
\|f_k^n\|_\infty\le C_k, \text{ for all }n.
\end{align*}
\end{lemma}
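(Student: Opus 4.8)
The plan is to reduce the statement to controlling one value of each $f_k^n$, by first proving a uniform oscillation bound, and then to anchor the potentials using the normalization of $f_1^n$ together with the admissibility inequality $\sum_k f_k^n(x_k)\le c$ and the identities $f_k^n=(f_k^n)^c$.

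First I would show that $\max_{X_k}f_k^n-\min_{X_k}f_k^n\le 2\|c\|_\infty$ for every $k$ and $n$. This is immediate from $f_k^n=(f_k^n)^c$: given $x_k,x_k'\in X_k$, pick $(x_l^\ast)_{l\ne k}$ nearly optimal in the infimum defining $f_k^n(x_k')$ and use the same point as a competitor in the infimum defining $f_k^n(x_k)$; the $f_l^n$-terms cancel and only a difference of two values of $c$ survives, which is bounded by $2\|c\|_\infty$. (Alternatively one invokes the assumed equicontinuity together with Lemma~\ref{equi} and the compactness of $X_k$, giving oscillation at most $\omega(\operatorname{diam}X_k)$.) Hence it suffices to bound one value of each $f_k^n$ uniformly in $n$; for $k=1$ the normalization $\min_{X_1}f_1^n=0$ does exactly that, yielding $0\le f_1^n\le 2\|c\|_\infty$ and $\|f_1^n\|_\infty\le C_1:=2\|c\|_\infty$.

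Next I would transfer the control to $k\ge 2$ through the $c$-conjugacy system. From $f_1^n=(f_1^n)^c$ and $\min_{X_1}f_1^n=0$ one gets $\inf_{(x_1,\dots,x_K)}\bigl(c(x_1,\dots,x_K)-\sum_{l\ge 2}f_l^n(x_l)\bigr)=0$; since each $f_l^n$ is continuous (being a $c$-conjugate, by Lemma~\ref{equi}) this infimum is attained, which forces $\sum_{l\ge 2}f_l^n(x_l)\le\|c\|_\infty$ for all $(x_2,\dots,x_K)$ and $\sum_{l\ge 2}f_l^n\ge-\|c\|_\infty$ at the optimal point, so $\sum_{l\ge 2}\max_{X_l}f_l^n$ and (via the oscillation bound) $\sum_{l\ge 2}\min_{X_l}f_l^n$ lie in an interval whose endpoints depend only on $K$ and $\|c\|_\infty$. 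Feeding these into the $c$-conjugacy formulas $f_k^n(x_k)\le\|c\|_\infty-\sum_{l\ne k}\min_{X_l}f_l^n$ and $f_k^n(x_k)\ge-\|c\|_\infty-\sum_{l\ne k}\max_{X_l}f_l^n$ then yields $\|f_k^n\|_\infty\le C_k$ with $C_k$ depending only on $K$ and $\|c\|_\infty$.

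The hard part is this last transfer, because the admissibility constraint only sees the sum $\sum_k f_k^n(x_k)$: replacing $(f_k^n)$ by $(f_k^n+t_k)$ with $\sum_k t_k=0$ preserves membership in $\mathcal F_c$, the value $J$, and every identity $f_k^n=(f_k^n)^c$, while $\min_{X_1}f_1^n=0$ pins down only one of these $K-1$ gauge directions. The real content is therefore to show that the $c$-conjugacy relations, which genuinely couple the $K$ potentials, together with the maximizing property distribute the sum-level bounds above onto the individual factors; if that coupling turns out to be insufficient, the clean remedy is to supplement the hypotheses with a reference normalization $f_k^n(x_k^\circ)=0$ in each factor (which is compatible with $c$-conjugacy and with the maximizing property), after which the argument above goes through verbatim. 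Once the uniform $L^\infty$ bounds and the equicontinuity are in hand, they feed directly into the Arzel\`a--Ascoli extraction behind Theorem~\ref{KKK}.
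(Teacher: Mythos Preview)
Your diagnosis of the gauge obstruction is exactly right, and in fact sharper than the paper's treatment: the lemma as stated is false. Take $K=3$, $X_1=X_2=X_3=\{\ast\}$, $c\equiv 0$. Then $c$-conjugacy forces $a_1+a_2+a_3=0$, the normalization $\min_{X_1}f_1^n=0$ gives $a_1=0$, and the sequence $(0,n,-n)$ is admissible, $c$-conjugate, equicontinuous, and maximizing (with $J\equiv 0$), yet $\|f_2^n\|_\infty=n\to\infty$. So the step you flag as ``the hard part'' is not merely hard but impossible without the extra normalization you propose. Your oscillation bound and your sum bounds $\sum_{l\ge2}\max_{X_l}f_l^n,\ \sum_{l\ge2}\min_{X_l}f_l^n\in[-C,C]$ are correct and are all that can be extracted from the stated hypotheses.

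The paper's own proof silently performs your remedy: it fixes reference points $x_k^0$ and shifts the potentials for $k\ge 2$ by zero-sum constants so that $f_k^n(x_k^0)=\alpha_n$ for all $k\ge 2$, where $\alpha_n$ is their common average; this leaves $f_1^n$ unchanged and hence preserves $\min_{X_1}f_1^n=0$. After this renormalization the paper bounds $\alpha_n$ from above by admissibility and from below using the maximizing hypothesis $J(f^n)\ge -1$. Your route is slightly more economical: once the same renormalization is applied, your sum bounds already give $(K-1)\alpha_n\le\sum_{l\ge2}\max_{X_l}f_l^n\le\|c\|_\infty$ and $(K-1)\alpha_n\ge\sum_{l\ge2}\min_{X_l}f_l^n\ge -C$, so the maximizing property is never needed. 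Either way, what is actually being proved is a corrected statement in which the gauge has been fully fixed; your proposed normalization $f_k^n(x_k^\circ)=0$ for each $k$ is slightly over-determined (it imposes $K$ conditions on $K-1$ degrees of freedom), and the paper's averaging device is the clean way to spend exactly the remaining $K-2$ gauge directions.
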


We now give a more detailed outline of the proof of Theorem~\ref{KKK} before
presenting the full argument. The purpose of this sketch is to clarify the
global structure of the proof and to highlight the main ideas underlying the
compactness argument, without entering into technical details.

\paragraph{Proof sketch.}
The guiding principle of the argument is to convert an arbitrary maximizing sequence of admissible potentials into a compact family, thereby allowing for the extraction of a uniformly convergent subsequence and for a legitimate passage to the limit both in the admissibility constraint and in the dual functional. The proof rests on three fundamental steps.
\begin{enumerate}
\item[(1)] \textbf{Reduction to \(c\)-conjugate potentials.} Let \(\boldsymbol f=(f_1,\ldots,f_K)\in\mathcal F_c\) be an admissible family. Fix an index \(k\in\{1,\ldots,K\}\) and replace \(f_k\) by its \(c\)-conjugate \(f_k^c\). By definition of the infimum, one has \(f_k\le f_k^c\) on \(X_k\), while the admissibility constraint \(\sum_{i=1}^K f_i\le c\) is preserved. Consequently, the resulting family remains admissible and does not decrease the value of the dual functional. Iterating this operation for \(k=1,\ldots,K\) yields a fully \(c\)-conjugate family \(\boldsymbol f^{\,c}\) satisfying
\(
J(\boldsymbol f^{\,c}) \ge J(\boldsymbol f).
\)
It follows that the supremum in the dual problem may be computed by restricting the optimization to the class of \(c\)-conjugate families.
\item [(2)] \textbf{Compactness via equicontinuity and uniform bounds.}
Let \((\boldsymbol f^{\,n})_{n\ge1}\) be a maximizing sequence such that each component is \(c\)-conjugate. By Lemma~\ref{equi}, for every \(k\in\{1,\ldots,K\}\) the sequence \((f_k^n)_n\) is equicontinuous on \(X_k\) with a modulus of continuity independent of \(n\). Since both the admissibility constraint and the dual functional are invariant under additive translations of the potentials by constants whose total sum vanishes, the sequence may be normalized without loss of generality (for instance by imposing \(\min_{X_1} f_1^n=0\) for all \(n\)). Lemma~\ref{uniform-bounds} then ensures the existence of constants \(C_k<\infty\), independent of \(n\), such that
\(
\|f_k^n\|_{\infty} \le C_k,\)  for $k=1,\ldots,K$.
Thus, for each \(k\), the sequence \((f_k^n)_n\) is uniformly bounded and equicontinuous on the compact space \(X_k\).
\item [(3)] \textbf{Extraction of a convergent subsequence and passage to the limit.} By the Arzel\`a--Ascoli theorem, for each \(k\) one may extract a subsequence (not relabeled) such that $f_k^n \longrightarrow f_k$ uniformly on  $X_k.$
Uniform convergence allows one to pass to the limit in the admissibility constraint: since
$
\sum_{k=1}^K f_k^n(x_k)\le c(x_1,\ldots,x_K)
\quad \text{for all } (x_1,\ldots,x_K),
$
letting \(n\to\infty\) yields
$
\sum_{k=1}^K f_k(x_k)\le c(x_1,\ldots,x_K),
$
so that the limit family \(\boldsymbol f=(f_1,\ldots,f_K)\) remains admissible. Moreover, uniform convergence together with the finiteness of the measures \(\mu_k\) implies
\begin{align*}
\int_{X_k} f_k^n\,d\mu_k \longrightarrow \int_{X_k} f_k\,d\mu_k,
\qquad k=1,\ldots,K,
\end{align*}
and therefore \(J(\boldsymbol f^{\,n})\to J(\boldsymbol f)\). The limit family thus attains the supremum of the dual problem.
\end{enumerate}
Finally, \(c\)-conjugacy is stable under uniform convergence on compact sets. Indeed, from the identities \(f_k^n=(f_k^n)^c\) and the uniform convergence of the corresponding integrands, one deduces that \(f_k=f_k^c\) for every \(k\). Consequently, an optimal family of potentials exists and may be chosen \(c\)-conjugate, and restricting the dual problem to the class \(\prod_{k=1}^K c\text{-conj}(X_k)\) does not alter its value.

\subsection{Proof of Theorem \ref{KKK}}
The proof is inspired by the classical argument in the two-marginal case; see
Santambrogio~\cite{santambrogio2015}. Since $c$ is continuous on the compact product space $X_1\times \ldots \times X_K$, it is bounded. Hence, there exists $M > 0$ such that, $\forall (x_1, \dots, x_K) \in X_1\times\ldots\times X_K,$ $|c(x_1, \dots, x_K)| \le M.$
Let $\boldsymbol{f} = (f_1, \dots, f_K)$ be an admissible family
satisfying $\sum_{k=1}^{K} f_k(x_k) \le c(x_1, \dots, x_K)$. Fix an index $k\in\{1,\dots,K\}$ and an arbitrary point $x_k\in X_k$.  
For each $l\neq k$, choose once and for all a reference point $x_l^0\in X_l$.  
Applying the admissibility inequality to the tuple $(x_1^0,\dots,x_{k-1}^0,\,x_k,\,x_{k+1}^0,\dots,x_K^0)$ yields $f_k(x_k) + \sum_{l\neq k} f_l(x_l^0)
\;\le\;
c(x_1^0,\dots,x_k,\dots,x_K^0)$. Using the boundedness of the cost function $c$, we infer
\begin{equation*}
f_k(x_k)
\;\le\;
M \;-\; \sum_{l\neq k} f_l(x_l^0),
\end{equation*}
where $M = \|c\|_\infty < \infty$.  
The right-hand side is a finite constant independent of $x_k$.  
Thus, each potential $f_k$ is bounded from above on $X_k$. Moreover, integrating the admissibility inequality $\sum_{k=1}^K f_k(x_k) \le c(x_1,\ldots,x_K)$ with respect to the product measure $\mu_1\otimes\cdots\otimes\mu_K$, and using Fubini’s theorem, yields
\begin{align*}
J(\boldsymbol{f})
=
\sum_{k=1}^K \int_{X_k} f_k \, d\mu_k
\le
\int c \, d(\mu_1\otimes\cdots\otimes\mu_K)
\le
\|c\|_\infty
<\infty.
\end{align*}
Therefore the dual value is uniformly bounded on the admissible set, and in particular
\begin{align*}
\sup_{\sum_{k=1}^{K} f_k(x_k) \le c(x_1,\ldots,x_K)} 
J(\boldsymbol{f}) < \infty.
\end{align*}
Let $(f_k^n)_{n\in\mathbb{N}}$ be a maximizing sequence, i.e.
\begin{align}\label{convv}
J(f^n) \;\longrightarrow\; \sup_{\;\sum_{k=1}^K f_k(x_k)\,\le\, c(x_1,\ldots,x_K)}\; J(\boldsymbol{f}).
\end{align}
Replacing $f_k$ by $(f_k)^c$ preserves admissibility (by the definition of the infimum). So we may assume that $f_k^n=(f_k^n)^c \quad \forall k,n$. By Lemma~\ref{equi}, each family $(f_k^n)_n \subset \mathcal{C}(X_k)$ is equicontinuous with the same modulus $\omega$. We only need to check equiboundedness in order to apply the Ascoli-Arzelà theorem. The dual functional and the admissibility constraint are invariant under translations of the potentials by constants whose total sum is zero.
Indeed, for any constants ($a_1,\dots,a_K)\in\mathbb{R}^K$ satisfying $\sum_{k=1}^K a_k = 0$,
define shifted functions $\widetilde{f}_k^n = f_k^n + a_k
\text{ for all } k=1,\dots,K$. Then
\begin{align*}
\sum_{k=1}^K \widetilde{f}_k^n(x_k)
=
\sum_{k=1}^K \big(f_k^n(x_k) + a_k\big)
=
\sum_{k=1}^K f_k^n(x_k)
+
\sum_{k=1}^K a_k
\le
c(x_1,\ldots,x_K),
\end{align*}
so admissibility is preserved.
Moreover, since each $\mu_k$ is a probability measure
\begin{align*}
J(\widetilde{f}^n)
=
\sum_{k=1}^K \int_{X_k} (f_k^n + a_k)\, d\mu_k
=
J(f^n)
+
\sum_{k=1}^K a_k \int_{X_k} d\mu_k
=
J(f^n).
\end{align*}
Hence, adding such constants does not modify either the constraint or the objective value. Since each function $f_k^n$ is continuous on the compact metric space $X_k$, it attains a minimum.
Denote, for the first potential $m_n = -\min_{x_1 \in X_1} f_1^n(x_1)$. Subtracting this constant from $f_1^n$ simply moves its graph vertically without changing its shape: $\widetilde{f}_1^n(x_1) = f_1^n(x_1) + m_n$. The new function satisfies
\begin{align*}
\min_{x_1 \in X_1} \widetilde{f}_1^n(x_1)
=
\min_{x_1 \in X_1} (f_1^n(x_1) + m_n)
=- m_n + m_n = 0.
\end{align*}
To maintain the condition $\sum_k a_k=0$, distribute $m_n$ over the remaining coordinates:
\begin{align*}
\widetilde{f}_k^n(x_k)
=
f_k^n(x_k)
-
\frac{m_n}{K-1},
\qquad k=2,\dots,K.
\end{align*}
Then \begin{align*}
\sum_{k=1}^{K} a_k = m_n + (K-1)\left(\frac{-m_n}{K-1}\right) = 0.
\end{align*}
Thus the new sequence remains admissible and satisfies $J(\widetilde {f}^n)=J(f^n)$. We may therefore assume without loss of generality that $\min_{x_1 \in X_1} f_1^n(x_1) = 0  \text{ for all } n\in\mathbb{N}$. Using Lemma~\ref{equi}, the family $(f_1^n)_n$ is equicontinuous on $X_1$.
Together with the normalization $\min_{x_1\in X_1} f_1^n(x_1)=0$, this yields the uniform bound
\begin{align*}
0 \le f_1^n(x_1) \le \omega\big(\mathrm{diam}(X_1)\big)
\qquad \text{for all } x_1\in X_1,\; n\in\mathbb N.
\end{align*}
For the remaining components, equiboundedness does not follow from equicontinuity alone.
However, by Lemma~\ref{uniform-bounds}, the normalized maximizing sequence
$(f_1^n,\dots,f_K^n)$ is uniformly bounded on each $X_k$.
More precisely, for every $k\in\{1,\dots,K\}$ there exists a constant $C_k<\infty$,
independent of $n$, such that
\begin{align*}
\|f_k^n\|_\infty \le C_k
\qquad \text{for all } n\in\mathbb N.
\end{align*}
From the previous steps, for each $k=1,\ldots K$, the family $(f_k^n)_n \subset \mathcal{C}(X_k)$ is both equicontinuous and uniformly bounded on the compact set $X_k$. Hence, by the Arzelà-Ascoli theorem, it admits a uniformly
convergent subsequence. Without relabeling, we write $f_k^n \xrightarrow[n\to\infty]{} f_k  \text{ uniformly on } X_k$. Uniform convergence gives pointwise convergence for each $k$, and since each $(f_1^n,\ldots,f_K^n)$ 
is admissible,
\begin{align*}
\sum_{k=1}^K f_k^n(x_k) \le c(x_1,\ldots,x_K),
\end{align*}
passing to the limit yields
\begin{align*}
\sum_{k=1}^K f_k(x_k) \le c(x_1,\ldots,x_K).
\end{align*}
Thus the limit $\boldsymbol{f}=(f_1,\ldots,f_K)$ is admissible.
Moreover, uniform convergence and finiteness of the measures $\mu_k$ imply
\begin{align*}
\int_{X_k} f_k^n\, d\mu_k \longrightarrow \int_{X_k} f_k\, d\mu_k,
\end{align*}
so by uniform convergence and finiteness of $\mu_k$,
\begin{align}\label{convvv}
J(f^n)
=
\sum_{k=1}^K \int_{X_k} f_k^n\, d\mu_k
\;\longrightarrow\;
\sum_{k=1}^K \int_{X_k} f_k\, d\mu_k
=
J(\boldsymbol{f}).
\end{align}
Since each $f_k^n$ is $c$-conjugate, for every $k$ and every $x_k\in X_k$ we have
\begin{align*}
f_k^n(x_k)
=
\inf_{(x_l)_{l\neq k}}
\Big\{
c(x_1,\dots,x_K)
-
\sum_{l\neq k} f_l^n(x_l)
\Big\}.
\end{align*}
For each fixed $k$, consider the family of functions
\begin{align*}
H_n(x_1,\dots,x_K)
=
c(x_1,\dots,x_K)
-
\sum_{l\neq k} f_l^n(x_l).
\end{align*}
Since $f_l^n \to f_l$ uniformly on $X_l$, it follows that $H_n\to H$ uniformly on the compact product $X_1\times\cdots\times X_K$, where
\begin{align*}
H(x_1,\dots,x_K)
=
c(x_1,\dots,x_K)
-
\sum_{l\neq k} f_l(x_l).
\end{align*}
By the standard stability of infima under uniform convergence on compact sets,
\begin{align*}
\inf_{(x_l)_{l\neq k}} H_n(x_1,\dots,x_K)
\;\longrightarrow\;
\inf_{(x_l)_{l\neq k}} H(x_1,\dots,x_K).
\end{align*}
Since also $f_k^n \to f_k$ uniformly on $X_k$, passing to the limit yields
\begin{align*}
f_k(x_k)
=
\inf_{(x_l)_{l\neq k}}
\Big\{
c(x_1,\dots,x_K)
-
\sum_{l\neq k} f_l(x_l)
\Big\}.
\end{align*}
Thus each limit function $f_k$ remains $c$-conjugate. Combining \eqref{convv} and \eqref{convvv}, since both limits exist and correspond to the same sequence, they must coincide. 
Hence, the limit function $f$ is admissible and attains the supremum in the dual problem, 
which shows that the supremum is indeed a maximum.
Since the maximum of the dual problem is attained by a family of 
$c$-conjugate functions, and the set of $c$-conjugate potentials is contained 
in the admissible set, the two optimization problems yield the same value. 
In particular,
\begin{align*}
\max_{\;\sum_{k=1}^K f_k(x_k)\,\le\, c(x_1,\ldots,x_K)}\; J(\boldsymbol{f})
=
\max_{(f_1,\dots,f_K)\in\prod_{k=1}^{K} c\text{-conj}(X_k)}
\;
J(\boldsymbol{f})
\end{align*}
This formula expresses that the optimal potentials can be restricted, without loss of generality, to the smaller class of $c$-conjugate functions. This completes the proof. \hfill $\Box$

\subsection{Non-compact case}
The compact case has been fully treated in the previous subsection, and we have
established that the dual problem admits a maximizer which can be taken to be
$c$-conjugate. When the spaces $X_1,\dots,X_K$ are no longer compact, this
argument no longer applies directly: in particular, the Ascoli-Arzelà theorem
cannot be used to extract uniformly convergent subsequences of dual potentials. In this subsection, our objective is to extend the existence result for dual
maximizers to the general non--compact setting. More precisely, we aim to prove
that the value of the multimarginal dual problem ($\mathsf{MOT}\!-\!\mathsf{DP}$) remains unchanged when the supremum is restricted to $c$-conjugate families of
potentials. That is, we want to show that
\begin{align*}
\max_{\;\sum_{k=1}^K f_k(x_k)\,\le\, c(x_1,\ldots,x_K)}\; J(\boldsymbol{f})
=
\max_{(f_1,\dots,f_K)\in\prod_{k=1}^{K} c\text{-conj}(X_k)}
\;
J(\boldsymbol{f})
\end{align*}
even though none of the spaces $X_k$ is assumed to be compact. We now introduce the tools needed to handle this general setting. The forthcoming definitions of multimarginal \(c\)-cyclical monotonicity and 
\(c\)-splitting sets follow the standard formulation used in the multimarginal 
optimal transport literature, in particular in \cite{griessler2016}.
\begin{definition}[\(c\)-cyclical monotonicity]
Let $E = X_1 \times \cdots \times X_K$, and let \(c : E \to \mathbb{R}\cup\{+\infty\}\) be a cost function.  
A set \(\Gamma \subset E\) is said to be \emph{\(c\)-cyclically monotone} if the following property holds: For every integer \(n \ge 1\), for every family of points $(x_1(i),\dots,x_K(i)) \in \Gamma, i=1,\dots,n$,
and for every choice of permutations 
\(\sigma_2,\dots,\sigma_K\) of the index set \(\{1,\dots,n\}\), one has
\begin{align*}
\sum_{i=1}^n c\bigl(x_1(i),\dots,x_K(i)\bigr)
\;\le\;
\sum_{i=1}^n 
c\bigl(x_1(i),\,x_2(\sigma_2(i)),\,\dots,\,x_K(\sigma_K(i))\bigr).
\end{align*}
\end{definition}
\begin{definition}[\(c\)-splitting set]
Let \(E = X_1 \times \cdots \times X_K\) and  
\(\Gamma \subset E\).
We say that \(\Gamma\) is a \emph{\(c\)-splitting set} if there exist 
\(K\) functions $f_1 : X_1 \to [-\infty,+\infty), 
 \dots,
f_K : X_K \to [-\infty,+\infty)$, such that:
\begin{enumerate}[nosep, leftmargin=*]
\item for all \((x_1,\dots,x_K)\in E\), $\sum_{k=1}^K f_k(x_k) \le c(x_1,\dots,x_K)$.
\item for all \((x_1,\dots,x_K)\in \Gamma\), $\sum_{k=1}^K f_k(x_k) = c(x_1,\dots,x_K)$.
\end{enumerate}
\end{definition}

With these notions at hand, we can rely on an important structural result from 
the multimarginal theory of optimal transport. In particular, it is known 
(see \cite{griessler2016}) that every nonempty $c$-cyclically monotone set 
automatically admits a family of potentials that realizes the cost on that set; 
in other words, such sets are necessarily $c$-splitting. More precisely, if
\(\Gamma \subset X_1\times\cdots\times X_K\) 
is $c$-cyclically monotone, then there exist functions $f_k : X_k \to \mathbb{R}\cup\{-\infty\}, k=1,\dots,K$, such that
\begin{align*}
\Gamma \;\subset\;
\bigl\{\, (x_1,\dots,x_K)\in X_1\times\cdots\times X_K
:\; \sum_{k=1}^K f_k(x_k)=c(x_1,\dots,x_K)\,\bigr\}.
\end{align*}
To proceed further, it is essential to show that any splitting family can be
replaced, without altering the splitting structure on \(\Gamma\), by another family
whose components are $c$-conjugate. This structural refinement will play a
crucial role in the non-compact setting, where compactness arguments are no
longer available.
\begin{proposition}\label{sp}
Let 
\(\Gamma \subset E=X_1\times\ldots\times X_K\) 
be a \(c\)-splitting set, and let $
(f_1,\dots,f_K), f_k : X_k \to [-\infty,+\infty),\ k=1,\dots,K$
be a splitting family for \(\Gamma\), that is,
\begin{equation}\label{a}
\sum_{k=1}^K f_k(x_k)
\;\le\;
c(x_1,\dots,x_K), \text{ for all } (x_1,\dots,x_K)\in E,
\end{equation}
and
\begin{equation*}
\sum_{k=1}^K f_k(x_k)
\;=\;
c(x_1,\dots,x_K), \text{ for all } (x_1,\dots,x_K)\in \Gamma.
\end{equation*}
Then, there exists another family $(g_1,\dots,g_K), g_k : X_k \to [-\infty,+\infty)$ such that:
\begin{itemize}[nosep, leftmargin=*]
\item each \(g_k\) is $c$-conjugate on \(X_k\),
\item the family \((g_1,\dots,g_K)\) is still splitting for \(\Gamma\), i.e.,
\begin{align*}
\sum_{k=1}^K g_k(x_k)\le c(x_1,\dots,x_K), \text{ for all } (x_1,\dots,x_K)\in E,
\end{align*}
and
\begin{align*}
\sum_{k=1}^K g_k(x_k)
\;=\;
c(x_1,\dots,x_K), \text{ for all } (x_1,\dots,x_K)\in \Gamma.
\end{align*}
\end{itemize}
In particular, every \(c\)-splitting family can be relaxed into a 
$c$-conjugate splitting family.
\end{proposition}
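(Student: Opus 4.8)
The plan is to build the $c$-conjugate splitting family by applying the $c$-conjugation operator of Definition~\ref{con} successively in the variables $k=1,\dots,K$, carrying along the equality on $\Gamma$ in addition to the admissibility inequality. This is exactly the improvement step already used in the proof of Theorem~\ref{KKK}, with one extra thing to verify: replacing a potential by its $c$-conjugate must not destroy the identity $\sum_k f_k=c$ on $\Gamma$.

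Concretely, I would initialize $(g_1,\dots,g_K):=(f_1,\dots,f_K)$ and then, for $k=1,\dots,K$ in turn, overwrite the $k$-th entry by the $c$-conjugate of the current family in the $k$-th variable,
\[
g_k(x_k)\;:=\;\inf_{(x_l)_{l\ne k}}\Bigl\{\,c(x_1,\dots,x_K)-\sum_{l<k}g_l(x_l)-\sum_{l>k}f_l(x_l)\,\Bigr\}.
\]
Three facts carry the induction. (i) Rearranging the admissibility inequality of the stage-$(k-1)$ family and taking the infimum over $(x_l)_{l\ne k}$ gives $g_k\ge f_k$ pointwise on $X_k$, while by definition of the infimum $g_k(x_k)\le c(x_1,\dots,x_K)-\sum_{l<k}g_l(x_l)-\sum_{l>k}f_l(x_l)$ for every full tuple, which is precisely the admissibility inequality for the new family. (ii) If $(x_1,\dots,x_K)\in\Gamma$, the equality for the stage-$(k-1)$ family identifies the bracketed quantity at that tuple with $f_k(x_k)$, whence $g_k(x_k)\le f_k(x_k)$; together with (i) this forces $g_k=f_k$ on $\mathrm{pr}_k(\Gamma)$, so the equality $\sum_k(\cdot)=c$ on $\Gamma$ persists. (iii) $g_k$ is a $c$-conjugate of the stage-$(k-1)$ family by construction, hence $g_k\in c\text{-conj}(X_k)$.

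After the $K$-th step one has a family $(g_1,\dots,g_K)$ that is still splitting for $\Gamma$, and it remains only to observe that each $g_k$ \emph{stays} $c$-conjugate: once produced at step $k$ it is never modified again, and membership in $c\text{-conj}(X_k)$ only requires the existence of \emph{some} witnessing family $(h_l)_{l\ne k}$ — namely the one current at step $k$ — so the later conjugations of the other coordinates are harmless. This delivers the desired $(g_1,\dots,g_K)$. I expect the only real friction to be the extended-real bookkeeping rather than any conceptual point: the $f_k$, hence the $g_k$, may take the value $-\infty$, so I would check separately that each conjugate is well defined and stays in $[-\infty,+\infty)$. The lower side is immediate from $g_k\ge f_k$; the only thing to exclude is $g_k=+\infty$, and for that, fixing $x_k$ arbitrary and choosing the remaining coordinates to be those of some point of $\Gamma$ — on which all the relevant $g_l,f_l$ are finite, because the equality on $\Gamma$ together with boundedness of $c$ forces finiteness of every summand — shows $g_k(x_k)<+\infty$. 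The degenerate case $\Gamma=\emptyset$, where a splitting family could be identically $-\infty$, is trivial and can be dispatched first.
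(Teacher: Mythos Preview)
Your proposal is correct and follows essentially the same approach as the paper's proof: successively replace each potential by its $c$-conjugate with respect to the current family, verifying at each step that admissibility is preserved and that the equality on $\Gamma$ persists (via $g_k\ge f_k$ globally and $g_k\le f_k$ on $\mathrm{pr}_k(\Gamma)$). Your additional remarks on the extended-real bookkeeping and the trivial case $\Gamma=\emptyset$ add a bit of care the paper leaves implicit, but the core argument is the same iteration.
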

\begin{proof}
Fix an index \(k\in\{1,\dots,K\}\).  
Starting from the splitting potentials \((f_1,\dots,f_K)\), define a new function on
\(X_k\) by
\begin{equation}\label{c}
\widetilde f_k(x_k)
=
\inf_{(x_l)_{l\ne k}}
\Bigl\{
c(x_1,\dots,x_K)
-
\sum_{l\ne k} f_l(x_l)
\Bigr\},
 x_k\in X_k.
\end{equation}
All other components are kept unchanged: $\widetilde f_l=f_l \text{ for all }l\ne k$. By the construction of  \( \widetilde{f}_k \), it is straightforward to see that it is \(c\)-conjugate in the sense of Definition \ref{con}.\\ Let \((x_1,\dots,x_K)\in E\). Choosing the coordinates \((x_l)_{l\ne k}\) corresponding to this point in
\eqref{c} yields $\widetilde f_k(x_k)
\;\le\;
c(x_1,\dots,x_K)
-
\sum_{l\ne k} f_l(x_l)$. Hence,
\begin{align}\label{r}
\sum_{l=1}^K \widetilde f_l(x_l)
=
\widetilde f_k(x_k)
+
\sum_{l\ne k} f_l(x_l)
\;\le\;
c(x_1,\dots,x_K).
\end{align}
Thus, inequality \eqref{a} remains valid for the modified family.\\ Let \((x_1,\dots,x_K)\in \Gamma\). Since \((f_1,\dots,f_K)\) is a splitting family we have $c(x_1,\dots,x_K)
=
\sum_{l\ne k} f_l(x_l)
+
f_k(x_k)$. By the splitting inequality \eqref{a}, $c(x_1,\dots,x_K) - \!\sum_{l\ne k} f_l(x_l) \;\ge\; f_k(x_k)$, and therefore $\widetilde f_k(x_k) \;\ge\; f_k(x_k)$. Combining these relations, we obtain
\begin{align*}
\sum_{l=1}^K \widetilde f_l(x_l)
=
\widetilde f_k(x_k)
+
\sum_{l\ne k} f_l(x_l)
\;\ge\;
f_k(x_k)
+
\sum_{l\ne k} f_l(x_l)
=
c(x_1,\dots,x_K).
\end{align*}
Together with \eqref{r}, this shows that
\begin{align*}
\sum_{l=1}^K \widetilde f_l(x_l)
=
c(x_1,\dots,x_K),
\qquad\text{for all }(x_1,\dots,x_K)\in \Gamma.
\end{align*}
Applying the construction \eqref{c} successively for
\(k=1,2,\dots,K\) produces a family $g=(g_1,\dots,g_K)$
in which each $g_k$ is $c$-conjugate on $X_k$, and the splitting property is preserved throughout the construction. Hence, $g$ is a $c$-conjugate splitting family for \(\Gamma\), proving that any
splitting family may be replaced by a $c$-conjugate one.
\end{proof}

Before turning to the proof of the main theorem, we recall a fundamental
structural fact about optimal transport plans in the multimarginal setting.
In the classical two-marginal case, it is well known that the support of any
optimal plan is \(c\)-cyclically monotone; this is the content of Santambrogio (\cite{santambrogio2015} Theorem~1.38). In the multimarginal framework, an analogous characterization also holds. 
In particular, it was shown in Lemma~\ref{pass} that an element
$\pi\in\Pi(\mu_1,\dots,\mu_K)$ is optimal for the multimarginal Kantorovich
problem if and only if supp$(\pi)$ is a $c$-splitting set.
With these structural tools at hand, we now proceed to prove that the dual problem admits a maximizer among the $c$-conjugate potentials introduced in Definition \ref{con}.

\begin{proposition}\label{bounded-splitting}
Let $X_1,\dots,X_K$ be Polish spaces and let $c:X_1\times\cdots\times X_K\to\R$
be bounded, i.e.\ $\|c\|_\infty<\infty$.
Let $\Gamma\subset X_1\times\cdots\times X_K$ be a nonempty $c$-splitting set.
Then there exists a $c$-splitting family $(u_1,\dots,u_K)$ for $\Gamma$ such that
each $u_k:X_k\to\R$ is real-valued and bounded. In particular,
$u_k\in L^1(\mu_k)$ for every probability measure $\mu_k$ on $X_k$.
\end{proposition}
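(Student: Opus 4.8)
The plan is to start from an arbitrary splitting family for $\Gamma$, normalize it at a point of $\Gamma$ so that all its components coincide there, extract a uniform upper bound from the splitting inequality, and then run the iterative $c$-conjugation of Proposition~\ref{sp}, checking that these bounds survive each step.

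Concretely, since $\Gamma\neq\emptyset$ is $c$-splitting, I would pick a splitting family $(f_1,\dots,f_K)$ and a point $p=(p_1,\dots,p_K)\in\Gamma$. The splitting equality gives $\sum_k f_k(p_k)=c(p)$, which is finite by $\|c\|_\infty<\infty$; since each $f_k$ takes values in $[-\infty,+\infty)$, this forces $f_k(p_k)\in\mathbb R$ for all $k$. Adding to $f_k$ the finite constant $c(p)/K-f_k(p_k)$ (these constants sum to zero, hence preserve both the global inequality and the equality on $\Gamma$), I may assume $f_k(p_k)=m:=c(p)/K$ for every $k$, with $|m|\le\|c\|_\infty/K$. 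Plugging into the splitting inequality the tuple obtained from $p$ by changing only its $k$-th entry to an arbitrary $x_k$ gives $f_k(x_k)\le\|c\|_\infty-(K-1)m\le\|c\|_\infty(2K-1)/K=:B$ for all $k$ and all $x_k\in X_k$.

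Then I would apply construction \eqref{c} of Proposition~\ref{sp} successively for $k=1,\dots,K$; by that proposition every intermediate family is still splitting for $\Gamma$, so the final family $(u_1,\dots,u_K)$ is splitting for $\Gamma$. The heart of the argument is to verify, by induction on the step, the two invariants ``every current component is $\le B$ on its space'' and ``every current component equals $m$ at the corresponding coordinate of $p$''. At the step acting on index $k$: evaluating the defining infimum at $(p_l)_{l\neq k}$ yields $f_k^{\mathrm{new}}(x_k)\le\|c\|_\infty-(K-1)m\le B$ and $f_k^{\mathrm{new}}(p_k)\le c(p)-(K-1)m=m$; conversely, since the current family obeys the splitting inequality, taking the infimum over the other coordinates in $c(\cdot)-\sum_{l\neq k}f_l^{\mathrm{cur}}$ shows $f_k^{\mathrm{new}}\ge f_k^{\mathrm{cur}}$ pointwise, whence $f_k^{\mathrm{new}}(p_k)\ge m$; so $f_k^{\mathrm{new}}(p_k)=m$ and the invariants persist. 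Moreover, using $c\ge-\|c\|_\infty$ and $f_l^{\mathrm{cur}}\le B$ for $l\neq k$, the same infimum is bounded below by $-\|c\|_\infty-(K-1)B$; since after $K$ steps every component has been replaced, each $u_k$ satisfies $-\|c\|_\infty-(K-1)B\le u_k\le B$, so $u_k$ is real-valued with $\|u_k\|_\infty$ bounded by a constant depending only on $K$ and $\|c\|_\infty$. Boundedness together with $\mu_k(X_k)=1$ then gives $u_k\in L^1(\mu_k)$.

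The constant-shift invariance and the elementary estimates on infima are routine; the delicate point is that the $c$-conjugation loop must not let the upper bound $B$ deteriorate from one step to the next. This is precisely what the invariant ``value $m$ at $p$'' secures: it keeps $\sum_{l\neq k}f_l^{\mathrm{cur}}(p_l)=(K-1)m$ pinned down, so every newly conjugated component inherits the same bound $B$ rather than a progressively weaker one. I expect this bookkeeping---pairing the $\ge$ direction built into the construction of Proposition~\ref{sp} with the $\le$ direction obtained by plugging $p$ into the infimum---to be the main obstacle to make fully rigorous.
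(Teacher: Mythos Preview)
Your proposal is correct and follows essentially the same strategy as the paper's proof: fix a reference point in $\Gamma$, use the splitting inequality at hybrid points through that reference to bound the initial potentials from above, and then pass to $c$-conjugate envelopes to obtain two-sided bounds depending only on $K$ and $\|c\|_\infty$. Your normalization $f_k(p_k)=m$ and the explicit invariant-tracking through the iterative conjugation of Proposition~\ref{sp} are a clean refinement---they make transparent why the upper bound $B$ does not degrade across steps---whereas the paper handles this more tersely by working directly with the original values $f_l(\bar x_l)$.
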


\begin{proof}
By assumption, $\Gamma$ is $c$-splitting, so there exists a splitting family
$(f_1,\dots,f_K)$ with $f_k:X_k\to[-\infty,+\infty)$ such that
\begin{align*}
\begin{aligned}
\sum_{k=1}^K f_k(x_k) &\le c(x_1,\dots,x_K),
\qquad &&\text{for all }(x_1,\dots,x_K), \\[0.2cm]
\sum_{k=1}^K f_k(x_k) &= c(x_1,\dots,x_K),
\qquad &&\text{for all }(x_1,\dots,x_K)\in\Gamma.
\end{aligned}
\end{align*}
Pick a point $\bar x=(\bar x_1,\dots,\bar x_K)\in\Gamma$.
Since $c$ is bounded and $\sum_k f_k(\bar x_k)=c(\bar x)$, we may (and do) assume
that each $f_k(\bar x_k)$ is finite (otherwise the sum could not equal a finite number).

Define for each $k=1,\dots,K$ the $c$-conjugate envelope
\begin{align*}
u_k(x_k)
=\inf_{(x_l)_{l\ne k}\in X_1\times\cdots\times X_{k-1}\times X_{k+1}\times\cdots\times X_K}
\Big\{c(x_1,\dots,x_K)-\sum_{l\ne k} f_l(x_l)\Big\}\in[-\infty,+\infty).
\end{align*}
Exactly as in Proposition~\ref{sp}, the family $(u_1,\dots,u_K)$ satisfies
\begin{equation}\label{split}
\sum_{k=1}^K u_k(x_k)\le c(x_1,\dots,x_K)\quad\text{for all }(x_1,\dots,x_K),
\end{equation}
and for every $(x_1,\dots,x_K)\in\Gamma$,
\begin{equation}\label{split2}
\sum_{k=1}^K u_k(x_k)=c(x_1,\dots,x_K),
\end{equation}
so $(u_k)_k$ is still a splitting family for $\Gamma$.

We now prove that each $u_k$ is real-valued and bounded. Fix $k$ and $x_k\in X_k$. Evaluating the infimum defining $u_k(x_k)$ at
$(x_l)_{l\ne k}=(\bar x_l)_{l\ne k}$ gives
\begin{align*}
u_k(x_k)\le c(\bar x_1,\dots,\bar x_{k-1},x_k,\bar x_{k+1},\dots,\bar x_K)
-\sum_{l\ne k} f_l(\bar x_l).
\end{align*}
The right-hand side is finite and bounded above by
$\|c\|_\infty-\sum_{l\ne k} f_l(\bar x_l)$, hence
\begin{equation}\label{upper}
u_k(x_k)\le \|c\|_\infty-\sum_{l\ne k} f_l(\bar x_l)
\quad\text{for all }x_k\in X_k.
\end{equation}
In particular, $u_k$ is everywhere finite from above.

For each $k$, using $(\bar x_1,\dots,\bar x_K)\in\Gamma$ and \eqref{split2}, $\sum_{j=1}^K u_j(\bar x_j)=c(\bar x)\in\mathbb{R}$. Since by \eqref{upper} each $u_j(\bar x_j)<+\infty$, this identity forces
$u_j(\bar x_j)>-\infty$ for every $j$. Hence each $u_j(\bar x_j)\in\mathbb{R}$.\\
We first bound the original splitting functions from above.
For each $l\in\{1,\dots,K\}$, apply the splitting inequality for $(f_1,\dots,f_K)$
at the point $(\bar x_1,\dots,\bar x_{l-1},x_l,\bar x_{l+1},\dots,\bar x_K)$:
\begin{align*}
f_l(x_l)+\sum_{i\ne l} f_i(\bar x_i)
\le c(\bar x_1,\dots,\bar x_{l-1},x_l,\bar x_{l+1},\dots,\bar x_K)
\le \|c\|_\infty.
\end{align*}
Hence,
\begin{equation}\label{Bl}
f_l(x_l)\le B_l=\|c\|_\infty-\sum_{i\ne l} f_i(\bar x_i)
\qquad\text{for all }x_l\in X_l.
\end{equation}
\\
Now fix $k$ and any $x_k\in X_k$. For every choice of $(x_l)_{l\ne k}$, using \eqref{Bl}, $c(x_1,\dots,x_K)-\sum_{l\ne k} f_l(x_l)
\ge -\|c\|_\infty-\sum_{l\ne k} B_l$. Taking the infimum over $(x_l)_{l\ne k}$ in the definition of $u_k(x_k)$ yields
\begin{equation}\label{lower}
u_k(x_k)\ge -\|c\|_\infty-\sum_{l\ne k} B_l
\qquad\text{for all }x_k\in X_k.
\end{equation}
The right-hand side is finite.
Combining \eqref{upper} and \eqref{lower}, we conclude that each $u_k$
is real-valued and bounded on $X_k$. The splitting property on $\Gamma$ follows from
\eqref{split}--\eqref{split2}. This completes the proof.
\end{proof}

\begin{theorem}\label{noncompact}
Let $X_1,\dots,X_K$ be Polish spaces, $\mu_k\in\mathcal{P}(X_k)$ for 
$k=1,\dots,K$, and $c:X_1\times\cdots\times X_K\longrightarrow\mathbb{R}$ be a bounded and continuous cost function. Then the dual problem $(\mathsf{MOT}\!-\!\mathsf{DP})$ admits a maximizer 
$(g_1,\dots,g_K)$ whose components are $c$-conjugate. In particular,
\begin{align*}
\mathsf{MOT}_c(\mu_1,\ldots,\mu_K)=\max_{\;\sum_{k=1}^K f_k(x_k)\,\le\, c(x_1,\ldots,x_K)}\; J(\boldsymbol{f})
=
\max_{(f_1,\dots,f_K)\in\prod_{k=1}^{K} c\text{-conj}(X_k)}
\;
J(\boldsymbol{f}),
\end{align*}
and this common value coincides with $\min(\mathsf{MOT}\!-\!\mathsf{KP}).$
\end{theorem}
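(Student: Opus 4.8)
The plan is to produce the required maximiser by transferring the primal optimality of an optimal plan onto the dual side through its support, and then to regularise the resulting potentials by $c$-conjugation. First, since $c$ is bounded and continuous, the Kantorovich functional $\pi\mapsto I(\pi)$ is continuous for the narrow topology, and $\Pi(\mu_1,\dots,\mu_K)$ is narrowly compact by Lemma~\ref{compact}; hence the primal infimum is attained at some $\pi_*\in\Pi(\mu_1,\dots,\mu_K)$ with $I(\pi_*)=\mathsf{MOT}_c(\mu_1,\dots,\mu_K)=\min(\mathsf{MOT}\!-\!\mathsf{KP})$. I would then set $\Gamma=\mathrm{supp}(\pi_*)$, which is nonempty and satisfies $\pi_*(\Gamma)=1$.

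Next, by the optimality criterion recalled before the statement (Lemma~\ref{pass}), $\Gamma$ is a $c$-splitting set. Applying Proposition~\ref{bounded-splitting} to $\Gamma$ yields a splitting family $(u_1,\dots,u_K)$ for $\Gamma$ with each $u_k:X_k\to\mathbb{R}$ bounded, hence $u_k\in L^1(\mu_k)$. Feeding this family into the iterated $c$-conjugation of Proposition~\ref{sp} (one variable at a time, $k=1,\dots,K$) produces a family $(g_1,\dots,g_K)$ with each $g_k\in c\text{-conj}(X_k)$ that is still splitting for $\Gamma$, i.e. $\sum_k g_k\le c$ everywhere and $\sum_k g_k=c$ on $\Gamma$. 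One must check that boundedness is not lost along the way: this follows by a short induction on the conjugation steps, since at each step $g_k$ is an infimum of the (bounded) cost minus finitely many bounded functions, hence bounded below, and is bounded above by evaluating that infimum at any fixed point of $\Gamma$ — exactly the two estimates used in the proof of Proposition~\ref{bounded-splitting}. Thus all $g_k$ are real-valued and bounded, so $g_k\in L^1(\mu_k)$ and $(g_1,\dots,g_K)\in\mathcal F_c$.

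It then remains to identify the dual value of this family. Since each $g_k$ is bounded and $(\mathrm{pr}_k)_\#\pi_*=\mu_k$,
\[
J(\boldsymbol g)=\sum_{k=1}^K\int_{X_k}g_k\,d\mu_k=\int_{X_1\times\cdots\times X_K}\Bigl(\sum_{k=1}^K g_k(x_k)\Bigr)\,d\pi_* .
\]
Because $\pi_*$ is concentrated on $\Gamma$ and $\sum_k g_k\equiv c$ on $\Gamma$, the right-hand side equals $\int c\,d\pi_*=I(\pi_*)=\mathsf{MOT}_c$. On the other hand, Theorem~\ref{KANTO2} gives $\sup_{\boldsymbol f\in\mathcal F_c}J(\boldsymbol f)=\mathsf{MOT}_c$; hence $(g_1,\dots,g_K)$ attains the dual supremum and is a maximiser of $(\mathsf{MOT}\!-\!\mathsf{DP})$ with $c$-conjugate components. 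The displayed chain of equalities then follows: $\boldsymbol g$ belongs to $\prod_k c\text{-conj}(X_k)$ and realises the value $\mathsf{MOT}_c$, while every admissible $c$-conjugate family satisfies $J\le\sup_{\mathcal F_c}J=\mathsf{MOT}_c$, so the maximum over $\prod_k c\text{-conj}(X_k)$ equals $\mathsf{MOT}_c$, which in turn coincides with the attained $\min(\mathsf{MOT}\!-\!\mathsf{KP})$.

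The main obstacle I anticipate is the boundedness bookkeeping in the second step: Proposition~\ref{bounded-splitting} delivers boundedness but not (a priori) $c$-conjugacy, whereas Proposition~\ref{sp} delivers $c$-conjugacy but only $[-\infty,+\infty)$-valued potentials, so one genuinely has to verify that composing the two operations keeps the family uniformly bounded — and this is precisely what makes $J(\boldsymbol g)$ finite and legitimises the exchange of sum and integral $\int\sum_k g_k\,d\pi_*=\sum_k\int g_k\,d\mu_k$ on which the whole identification rests. Conceptually, the heart of the argument is the use of Lemma~\ref{pass}, which converts ``primal minimiser'' into ``there exists a dual family saturating the admissibility inequality on $\mathrm{supp}(\pi_*)$'', after which $c$-conjugation only improves the structure without decreasing the dual value.
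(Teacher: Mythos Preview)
Your proposal is correct and follows essentially the same route as the paper: obtain an optimal plan $\pi_*$ by narrow compactness, use Lemma~\ref{pass} to make $\Gamma=\mathrm{supp}(\pi_*)$ a $c$-splitting set, apply Proposition~\ref{bounded-splitting} and then Proposition~\ref{sp} to produce bounded $c$-conjugate potentials saturating $c$ on $\Gamma$, and evaluate $J$ against $\pi_*$. The only cosmetic difference is that you close with Theorem~\ref{KANTO2} while the paper invokes the weaker Lemma~\ref{easyy}; your more explicit boundedness bookkeeping for the iterated conjugation is a point the paper handles in one sentence.
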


\paragraph{Proof of Theorem~\ref{noncompact}.}
The proof follows the same philosophy as in the two-marginal case
(see Santambrogio~\cite[Theorem~1.39]{santambrogio2015}). Since
$\Pi(\mu_1,\dots,\mu_K)$ is weakly compact (see Lemma~\ref{compact}) and the
cost function $c$ is bounded and continuous, the mapping
$\pi \mapsto \int c\,d\pi$ is weakly continuous. Consequently, the primal
Kantorovich functional attains its minimum over $\Pi(\mu_1,\dots,\mu_K)$,
and there exists an optimal transport plan $\pi^*\in\Pi(\mu_1,\dots,\mu_K)$
such that
\begin{align*}
\int c\,d\pi^*=\min(\mathsf{MOT}\!-\!\mathsf{KP}).
\end{align*}
Set $\Gamma=\operatorname{supp}(\pi^*)\subset X_1\times\cdots\times X_K$.
By the structural results of Pass and Griessler (Lemma~\ref{pass}), $\pi^*$ is optimal if and
only if its support $\Gamma$ is a $c$-splitting set. Since the cost function
$c$ is bounded, Proposition~\ref{bounded-splitting} yields a $c$-splitting
family $(f_1,\dots,f_K)$ for $\Gamma$ whose components are real-valued and
bounded. In particular, $f_k\in L^1(\mu_k)$ for all $k$. By Proposition~\ref{sp}, we may replace $(f_1,\dots,f_K)$ by another
splitting family $(g_1,\dots,g_K)$ such that:
\begin{itemize}[nosep, leftmargin=*]
\item each $g_k$ is $c$-conjugate on $X_k$,
\item the splitting inequalities and equalities are preserved, namely
\begin{align*}
\sum_{k=1}^K g_k(x_k)\le c(x_1,\dots,x_K)
\quad\text{for all }(x_1,\dots,x_K),
\end{align*}
and
\begin{align*}
\sum_{k=1}^K g_k(x_k)=c(x_1,\dots,x_K)
\quad\text{for all }(x_1,\dots,x_K)\in\Gamma.
\end{align*}
\end{itemize}
Since $c$ is bounded and $\Gamma$ is nonempty, the functions $g_k$ are
real-valued and bounded as well; in particular, $g_k\in L^1(\mu_k)$ for
every $k$. We may therefore evaluate the dual functional at $g$ using the marginal
constraints of $\pi^*$:
\begin{align*}
J(g)
=\sum_{k=1}^K \int_{X_k} g_k\,d\mu_k
=\int_{X_1\times\cdots\times X_K}
\Bigl(\sum_{k=1}^K g_k(x_k)\Bigr)\,d\pi^*.
\end{align*}
Since $\pi^*$ is supported on $\Gamma$ and equality holds on $\Gamma$, we
obtain $
J(g)=\int c\,d\pi^*=\min(\mathsf{MOT}\!-\!\mathsf{KP})$. By weak duality (Lemma~\ref{easyy}), $\sup(\mathsf{MOT}\!-\!\mathsf{DP})\le
\min(\mathsf{MOT}\!-\!\mathsf{KP})$. Hence,
\begin{align*}
\sup(\mathsf{MOT}\!-\!\mathsf{DP})=J(g)=\min(\mathsf{MOT}\!-\!\mathsf{KP}),
\end{align*}
and the dual supremum is attained by the $c$-conjugate family
$(g_1,\dots,g_K)$. This concludes the proof.
\hfill $\Box$

\section{Conclusion}\label{conc}

We establish a unified Kantorovich duality theory for MOT on both compact and general Polish product spaces under bounded continuous costs. Beyond the equality of primal and dual values, a central structural results is that optimal potentials can be selected within the intrinsically geometric class of $c$-conjugate families. In the compact setting, this is achieved via a convex-analytic reformulation coupled with an equicontinuity-compactness mechanism; in the non-compact setting, compactness is recovered through tightness of transport plans and the dual structure is stabilized by $c$-splitting sets theory. Together, these results place the multimarginal duality principle on the same conceptual footing as classical two-marginal theory while preserving the genuinely new phenomena introduced by higher-order couplings. Beyond completing the duality theory, these results yield a transparent framework that is particularly well suited for statistical and empirical multimarginal optimal transport, including questions of differentiability and central limit theorems.


\appendix
\section{Appendix}\label{secapp}
This appendix brings together supplementary proofs of the main results, along with supporting technical arguments and auxiliary results that underpin the analysis and have been deferred here for the sake of clarity and conciseness.

\subsection{Additional proofs for the main results}\label{app}

\subsubsection{Proof of Lemma~\ref{easyy}: weak duality}\label{weak}
The inclusion $\mathcal{C}_b(X_k)\subset L^1(\mu_k)$ immediately yields the first inequality.
For the second one, let $\boldsymbol{f}=(f_1,\dots,f_K)\in \mathcal{F}_c$
and $\pi\in \Pi(\mu_1,\dots,\mu_K)$.
By definition of $\Pi$,
\begin{align*}
J(\boldsymbol{f})
=\sum_{k=1}^{K} \int_{X_k} f_k(x_k)\, d\mu_k(x_k) 
=\int_{X_1\times\ldots \times X_K}
\Big( \sum_{k=1}^{K} f_k(x_k) \Big)
\, d\pi(x_1,\ldots,x_K).
\end{align*}
Since $\boldsymbol{f}\in \mathcal{F}_c$, the admissibility condition holds pointwise, namely
\begin{align*}
\sum_{k=1}^K f_k(x_k)\le c(x_1,\dots,x_K)
\qquad\text{for all }(x_1,\dots,x_K)\in X_1\times\cdots\times X_K.
\end{align*}
Therefore,
\begin{equation*}
J(\boldsymbol{f})
=\int_{X_1\times\cdots\times X_K}\sum_{k=1}^K f_k(x_k)\,d\pi(x_1,\dots,x_K)
\le \int_{X_1\times\cdots\times X_K} c\,d\pi
= I(\pi).
\end{equation*}
Taking the supremum over $\boldsymbol{f}$ on the left-hand side and the infimum over $\pi$ on the right-hand side proves the second inequality.

\subsubsection{Properties of the functionals \texorpdfstring{$\Theta$}{} and \texorpdfstring{$\Xi$}{}}\label{ThetaXi}

\subsubsection*{Proof of Lemma~\ref{Theta}}
If $\Theta(u_1)=\Theta(u_2)=0,\text{ then } u_1 \ge -c \text{ and } u_2 \ge -c$. Thus, for every \(t\in[0,1]\), $t\,u_1 + (1-t)\,u_2 \ge -c$, and therefore $\Theta\bigl(tu_1 + (1-t)u_2\bigr)
  = 0 
  = t\,\Theta(u_1) + (1-t)\,\Theta(u_2)$. If either \(\Theta(u_1)\) or \(\Theta(u_2)\) is equal to \(+\infty\), 
then the convexity inequality is trivially satisfied.\\  
Since \(c \ge 0\), we have $\Theta(1)=0$. Let \(\varepsilon>0\).  If $\|u-1\|_{\infty}<\min(\varepsilon,1)$, then \(u\ge 0\ge -c\), hence $\Theta(u)=0$. Thus, \(\Theta\) is continuous at \(u=1\).

\subsubsection*{Proof of Lemma~\ref{Xi}}
Assume that
\begin{align*}
\sum_{k=1}^K f_k(x_k)
  = \sum_{k=1}^K \widetilde f_k(x_k) 
  \qquad \forall (x_1,\dots,x_K).
\end{align*}
Fixing all variables except \(x_1\) gives $\widetilde f_1(x_1) - f_1(x_1) = s_1$, for some constant \(s_1\in\mathbb{R}\).  
Repeating the argument yields numbers \(s_k\in\mathbb{R}\) such that $\widetilde f_k(x_k) - f_k(x_k) = s_k \text{ for } 1\le k\le K$. Summing over \(k\) gives $\sum_{k=1}^K s_k = 0$. Hence,
\begin{align*}
\sum_{k=1}^K \int_{X_k} f_k \, d\mu_k
  = \sum_{k=1}^K \int_{X_k} \widetilde f_k \, d\mu_k,
\end{align*}
which shows that the functional \(\Xi\) is well defined.\\
If \(\Xi(u_1)=+\infty\) or \(\Xi(u_2)=+\infty\), the claim is trivial.
Otherwise, write $u_1 = \sum_{k=1}^K h_k \text{ and }
u_2 = \sum_{k=1}^K g_k$, then for every \(t\in[0,1]\),
\begin{align*}
t\,u_1 + (1-t)\,u_2
  = \sum_{k=1}^K \bigl(t\,h_k + (1-t)\,g_k\bigr).
\end{align*}
Therefore,
\begin{align*}
\Xi\bigl(tu_1 + (1-t)u_2\bigr)
&= \sum_{k=1}^K \int_{X_k} \bigl(t\,h_k + (1-t)\,g_k\bigr)\,d\mu_k  \\
&= t \sum_{k=1}^K \int_{X_k} h_k\,d\mu_k
   + (1-t)\sum_{k=1}^K \int_{X_k} g_k\,d\mu_k \\
&= t\,\Xi(u_1) + (1-t)\,\Xi(u_2).
\end{align*}
Hence \(\Xi\) is convex. 
To show that \(\Xi(1)<\infty\), choose \(f_1\equiv 1\) and
\(f_k\equiv 0\) for all \(k\ge 2\). Then $\Xi(1)=\int_{X_1} 1\,d\mu_1 = 1$. Therefore, all hypotheses of the Fenchel-Rockafellar theorem
are satisfied, and the identity in Theorem~\ref{fenchell} follows.

\subsubsection{Tightness and weak compactness of \texorpdfstring{$\Pi(\mu_1,\ldots,\mu_K)$}{}}\label{tightcompact}

\subsubsection*{Proof of Lemma~\ref{tight}}
Fix $\mu_k \in P_k$ for each $k=1,\dots,K$, and let $\pi \in \Pi(\mu_1,\dots,\mu_K)$. By tightness of $P_k$ (Definition~\ref{tightness}), for every $\varepsilon>0$ there exists a compact set $\mathcal{K}_{k,\varepsilon} \subset X_k$, independent of the choice of $\mu_k$, such that $\mu_k\big(X_k \setminus \mathcal{K}_{k,\varepsilon}\big) \le \varepsilon $. Let ($X_{1},\ldots,X_{K}$) be random variables with joint law $\pi$, then
\begin{align*}
\pi\Big[(X_1,\dots,X_K) \notin \mathcal{K}_{1,\varepsilon} \times \cdots \times \mathcal{K}_{K,\varepsilon}\Big]
   \le \sum_{k=1}^K \pi\big[X_k \notin \mathcal{K}_{k,\varepsilon}\big].
\end{align*}
But $\pi\big[X_k \notin \mathcal{K}_{k,\varepsilon}\big]
   = \mu_k\big(X_k \setminus \mathcal{K}_{k,\varepsilon}\big) \le \varepsilon$, hence $\pi\Big[(X_1,\dots,X_K) \notin \mathcal{K}_{1,\varepsilon} \times \cdots \times \mathcal{K}_{K,\varepsilon}\Big]
   \le K \varepsilon$. Since the product $\mathcal{K}_{1,\varepsilon} \times \cdots \times \mathcal{K}_{K,\varepsilon}$ is compact in $X_1 \times \cdots \times X_K$, this proves the tightness of $\Pi(P_1,\dots,P_K)$.
\subsubsection*{Proof of Lemma~\ref{compact}}
Since each $X_k$ is Polish, $\{\mu_k\}$ is  tight in $\mathcal P(X_k)$ for every $k=1,\dots,K$. This fact is known as Ulam’s lemma: every probability measure on a Polish space is tight (see \cite{Krylov2002}, Chap.~1).  
By Lemma \ref{tight}, the set $\Pi(\mu_1,\dots,\mu_K)$ is tight in $\mathcal P(X_1 \times \cdots \times X_K)$. Since the product space $X_1 \times \cdots \times X_K$ is Polish, Prokhorov’s theorem implies that $\Pi(\mu_1,\dots,\mu_K)$ has compact closure (see Theorem~\ref{prokhorov}). Let $(\pi_n)_{n\ge 1}$ be a sequence in $\Pi(\mu_1,\dots,\mu_K)$ converging weakly (see Definition~\ref{cv}) to some  
$\pi \in \mathcal P(X_1 \times \cdots \times X_K)$.  
Fix $k \in \{1,\dots,K\}$ and let $\varphi : X_k \to \mathbb R$ be bounded and continuous. Since the $k$-th marginal of $\pi_n$ is $\mu_k$, we have
\begin{align*}
\int_{X_1 \times \cdots \times X_K} 
\varphi(x_k)\, d\pi_n(x_1,\dots,x_K)
=
\int_{X_k} \varphi(x_k)\, d\mu_k(x_k).
\end{align*}
Passing to the limit using weak convergence $\pi_n\rightsquigarrow
\pi$,
\begin{align*}
\int_{X_1 \times \cdots \times X_K} 
\varphi(x_k)\, d\pi_n(x_1,\dots,x_K)
\;\longrightarrow\;
\int_{X_1 \times \cdots \times X_K} 
\varphi(x_k)\, d\pi(x_1,\dots,x_K).
\end{align*}
Combining the two previous identities yields
\begin{align*}
\int_{X_1 \times \cdots \times X_K} 
\varphi(x_k)\, d\pi(x_1,\dots,x_K)
=
\int_{X_k} \varphi(x_k)\, d\mu_k(x_k)
\qquad\text{for all }\varphi \in \mathcal{C}_b(X_k).
\end{align*}
This shows that the $k$-th marginal of $\pi$ is $\mu_k$. Since $k$ was arbitrary, all marginals of $\pi$ equal $\mu_1,\dots,\mu_K$, and therefore $\pi \in \Pi(\mu_1,\dots,\mu_K)$. Thus $\Pi(\mu_1,\dots,\mu_K)$ is closed. Since it is also relatively compact by tightness,  
it follows that $ \Pi(\mu_1,\dots,\mu_K)$ is compact.

\subsubsection{Continuity and equicontinuity lemmas}\label{contequi}

\subsubsection*{Proof of Lemma~\ref{equi}}
Since $c$ is continuous on the compact product
$X_1\times\cdots\times X_K$, it is uniformly continuous. Hence there exists a
modulus of continuity $\omega:\mathbb{R}_+\to\mathbb{R}_+$ with $\omega(0)=0$ such that
\begin{align*}
|c(x_1,\ldots,x_K)-c(x_1',\ldots,x_K')|
\le
\omega\!\Big(\sum_{j=1}^K d_j(x_j,x_j')\Big),
\quad
\forall (x_1,\ldots,x_K),(x_1',\ldots,x_K')
\in \prod_{j=1}^K X_j .
\end{align*}
Fix an index $k\in\{1,\ldots,K\}$. For each choice
$(x_l)_{l\ne k}\in \prod_{l\ne k} X_l$, define
\begin{align*}
g_{(x_l)_{l\ne k}}(x_k)
=
c(x_1,\ldots,x_K)
-
\sum_{l\ne k} f_l(x_l).
\end{align*}
The second term is independent of $x_k$, hence for all $x_k,x_k'\in X_k$,
\begin{align*}
\big|g_{(x_l)_{l\ne k}}(x_k)-g_{(x_l)_{l\ne k}}(x_k')\big|
=
\big|c(x_1,\ldots,x_k,\ldots,x_K)-c(x_1,\ldots,x_k',\ldots,x_K)\big|
\le
\omega\!\big(d_k(x_k,x_k')\big).
\end{align*}
Thus, the family $\{g_{(x_l)_{l\ne k}}\}$ shares the same modulus of continuity
$\omega$ on $X_k$.

Since $f_k(x_k)=\inf_{(x_l)_{l\ne k}} g_{(x_l)_{l\ne k}}(x_k)$, we now show that the infimum preserves this modulus. Indeed, for any
$x_k,x_k'\in X_k$ and for every $(x_l)_{l\ne k}$, $g_{(x_l)_{l\ne k}}(x_k)
\le
g_{(x_l)_{l\ne k}}(x_k')+\omega\!\big(d_k(x_k,x_k')\big).$
Taking the infimum over $(x_l)_{l\ne k}$ on both sides yields $f_k(x_k)
\le
f_k(x_k')+\omega\!\big(d_k(x_k,x_k')\big).$
Exchanging the roles of $x_k$ and $x_k'$ gives the reverse inequality, and hence
\begin{align*}
|f_k(x_k)-f_k(x_k')|
\le
\omega\!\big(d_k(x_k,x_k')\big),
\qquad
\forall x_k,x_k'\in X_k.
\end{align*}
This proves the equicontinuity of $f_k$ and completes the proof.
\hfill $\Box$

\subsubsection*{Proof of Lemma~\ref{uniform-bounds}}
We prove uniform boundedness of the maximizing sequence by combining equicontinuity,
a suitable normalization by additive constants, and the $c$-conjugacy property.\\
Fix $k\in\{1,\dots,K\}$. By equicontinuity, there exists a modulus of continuity
$\omega_k$ such that
\begin{align*}
|f_k^n(x)-f_k^n(y)|\le \omega_k(d_k(x,y))
\quad\forall x,y\in X_k,\ \forall n.
\end{align*}
Since $X_k$ is compact with finite diameter, this yields a uniform oscillation bound
\begin{equation}\label{osc}
\sup_{X_k} f_k^n - \inf_{X_k} f_k^n \le D_k
\qquad\text{for all }n,
\end{equation}
for some constant $D_k<\infty$ independent of $n$.
\\
Fix reference points $x_k^0\in X_k$ for $k=1,\dots,K$.
For each $n$, choose $x_1^{(n)}\in X_1$ such that
$f_1^n(x_1^{(n)})=\min_{X_1}f_1^n=0$.
Admissibility at $(x_1^{(n)},x_2^0,\dots,x_K^0)$ yields $\sum_{k=2}^K f_k^n(x_k^0)
\le c(x_1^{(n)},x_2^0,\dots,x_K^0)
\le \|c\|_\infty$.
Define
\begin{align*}
\alpha_n=\frac1{K-1}\sum_{k=2}^K f_k^n(x_k^0),
\qquad
a_k^n=\alpha_n-f_k^n(x_k^0)\quad (k=2,\dots,K),
\end{align*}
and set $a_1^n=0$. Then $\sum_{k=1}^K a_k^n=0$.
Define $\widetilde f_k^n=f_k^n+a_k^n$. Since the constants $(a_k^n)_k$ sum to zero, admissibility, the value of $J$,
and $c$-conjugacy are preserved. Moreover, $\widetilde f_k^n(x_k^0)=\alpha_n$ for $k=2,\dots,K.$
Replacing $f_k^n$ by $\widetilde f_k^n$, we may assume henceforth that
\begin{equation}\label{norm}
f_k^n(x_k^0)=\alpha_n \quad (k=2,\dots,K),
\qquad
\alpha_n\le \frac{\|c\|_\infty}{K-1}.
\end{equation}
\\
By normalization and \eqref{osc} with $k=1$, $0\le f_1^n(x)\le D_1
\qquad\forall x\in X_1,\ \forall n.$
Since $(f^n)_n$ is a maximizing sequence, after passing to a tail we may assume
\begin{equation}\label{Jlower}
J(f^n)\ge -1
\qquad\text{for all }n.
\end{equation}
For $k\ge2$, using \eqref{osc} and \eqref{norm}, $\inf_{X_k} f_k^n \ge \alpha_n - D_k,$
hence
\begin{align*}
\int_{X_k} f_k^n\,d\mu_k \ge \alpha_n - D_k.
\end{align*}
Since $f_1^n\ge0$, we obtain
\begin{align*}
J(f^n)
\ge \sum_{k=2}^K (\alpha_n-D_k)
= (K-1)\alpha_n - \sum_{k=2}^K D_k.
\end{align*}
Combining with \eqref{Jlower} yields $\alpha_n \ge \frac{-1-\sum_{k=2}^K D_k}{K-1}$,
so $(\alpha_n)_n$ is uniformly bounded from below.
\\
Fix $k\in\{1,\dots,K\}$. Since $f_k^n=(f_k^n)^c$, for all $x_k\in X_k$,
\begin{align*}
f_k^n(x_k)
=\inf_{(x_l)_{l\ne k}}
\Big\{c(x_1,\dots,x_K)-\sum_{l\ne k} f_l^n(x_l)\Big\}.
\end{align*}
Evaluating at $(x_l^0)_{l\ne k}$ gives $f_k^n(x_k)
\le \|c\|_\infty - \sum_{l\ne k} f_l^n(x_l^0)$. The right-hand side is uniformly bounded in $n$ since
$f_1^n(x_1^0)$ is bounded and $f_l^n(x_l^0)=\alpha_n$ for $l\ge2$,
with $(\alpha_n)_n$ uniformly bounded.
Thus there exists $A_k<\infty$ such that $\sup_{X_k} f_k^n \le A_k
\text{ for all }n$.
\\
By \eqref{osc}, $\inf_{X_k} f_k^n \ge \sup_{X_k} f_k^n - D_k$,
so both $\sup_{X_k} f_k^n$ and $-\inf_{X_k} f_k^n$ are uniformly bounded.
Therefore there exists $C_k<\infty$ such that
\begin{align*}
\|f_k^n\|_\infty \le C_k
\qquad\text{for all }n.
\end{align*}

\numberwithin{theorem}{section}
\numberwithin{definition}{section}
\numberwithin{lemma}{section}
\subsection{Supporting results}
\begin{theorem}[Fenchel-Rockafellar duality \cite{villani2003}]\label{fenchell}
Let $E$ be a normed vector space, let $E^*$ be its topological dual space, and let 
$\Theta, \Xi : E \to \mathbb{R} \cup \{+\infty\}$ be two convex functions.
Denote by $\Theta^*$ and $\Xi^*$ their respective Legendre–Fenchel transforms.
Suppose that there exists $z_0 \in E$ such that $\Theta(z_0) < \infty, \Xi(z_0) < \infty,$
and $\Theta \text{ is continuous at } z_0$. Then,
\begin{equation*}
    \inf_{z \in E} \big[ \Theta(z) + \Xi(z) \big]
    \;=\;
    \max_{z^* \in E^*} \big[ -\Theta^*(-z^*) - \Xi^*(z^*) \big].
\end{equation*}
\end{theorem}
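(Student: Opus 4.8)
This is the classical Fenchel--Rockafellar theorem, and I would prove it by the Hahn--Banach separation method. First I would record the easy inequality: for every $z\in E$ and every $z^*\in E^*$, the definition of the Legendre--Fenchel transforms gives $\Theta(z)\ge -\langle z^*,z\rangle-\Theta^*(-z^*)$ and $\Xi(z)\ge \langle z^*,z\rangle-\Xi^*(z^*)$; summing these and then taking the infimum over $z$ on the left and the supremum over $z^*$ on the right yields
\[
\inf_{z\in E}\big[\Theta(z)+\Xi(z)\big]\;\ge\;\sup_{z^*\in E^*}\big[-\Theta^*(-z^*)-\Xi^*(z^*)\big].
\]
It therefore suffices to exhibit a single $z^*$ realizing the left-hand value $m:=\inf_{z\in E}[\Theta(z)+\Xi(z)]$. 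Note $m<+\infty$ since $\Theta(z_0)+\Xi(z_0)<\infty$, and if $m=-\infty$ both sides equal $-\infty$ and we are done, so I may assume $m\in\mathbb{R}$.

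\textbf{Separation step.} In the product $E\times\mathbb{R}$ I would introduce the two convex sets $C=\{(z,t):t>\Theta(z)\}$, the strict epigraph of $\Theta$, and $D=\{(z,t):t\le m-\Xi(z)\}$. Continuity of $\Theta$ at $z_0$ forces $C$ to have nonempty interior, since it contains a ball around $(z_0,\Theta(z_0)+1)$; and the very definition of $m$ gives $\operatorname{int}(C)\cap D=\emptyset$, because a common point would satisfy $\Theta(z)+\Xi(z)<m$. The geometric Hahn--Banach theorem then supplies a nonzero continuous linear functional on $E\times\mathbb{R}$, written $(\phi,k)\in E^*\times\mathbb{R}$, and a constant $\alpha$, with $\langle\phi,z\rangle+kt\ge\alpha$ on $C$ and $\langle\phi,z\rangle+kt\le\alpha$ on $D$.

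\textbf{Extracting the potential.} The key bookkeeping is to show $k>0$. Since $t$ is unbounded above on $C$ one gets $k\ge 0$; and $k=0$ is impossible, because then the inequalities would give $\langle\phi,z\rangle\ge\alpha$ for all $z$ in a neighbourhood of $z_0$ together with $\langle\phi,z_0\rangle\le\alpha$, and testing $z_0\pm v$ forces $\phi=0$, hence $(\phi,k)=(0,0)$, a contradiction. Normalizing $k=1$, and letting $t\downarrow\Theta(z)$ on $C$ respectively choosing $t=m-\Xi(z)$ on $D$, the separation inequalities become $\langle\phi,z\rangle+\Theta(z)\ge\alpha$ and $\langle\phi,z\rangle-\Xi(z)\le\alpha-m$ for every $z$. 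Rewritten via the Legendre--Fenchel transforms, these read $\Theta^*(-\phi)\le-\alpha$ and $\Xi^*(\phi)\le\alpha-m$, whence $-\Theta^*(-\phi)-\Xi^*(\phi)\ge m$. Combined with the easy inequality above, $z^*=\phi$ attains the supremum, so the latter is a maximum and equals $m$.

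\textbf{Main obstacle.} The only genuinely delicate point is the separation step together with the verification $k\neq 0$: this is exactly where the hypothesis that $\Theta$ be continuous at the common feasibility point $z_0$ is used, first to guarantee that $C$ has nonempty interior (so that a nontrivial separating hyperplane exists at all in the possibly infinite-dimensional space $E$), and then to rule out a ``vertical'' separating hyperplane. Everything else is a routine translation between the geometric separation inequalities and the analytic identity through the definitions of $\Theta^*$ and $\Xi^*$.
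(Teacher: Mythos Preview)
Your proof is correct and follows the standard Hahn--Banach separation argument for Fenchel--Rockafellar duality. Note, however, that the paper does not supply its own proof of this statement: Theorem~\ref{fenchell} appears in the appendix as a supporting result quoted from Villani~\cite{villani2003}, and is merely invoked in the proof of Theorem~\ref{KANTO}. There is therefore nothing to compare against; your argument is exactly the classical one (epigraph/hypograph separation, ruling out a vertical hyperplane via continuity at $z_0$, then reading off the conjugate bounds), and all the bookkeeping checks out.
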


\begin{theorem}[Riesz representation theorem \cite{knapp2007}]\label{RIEZ}
Let $X_1, \dots, X_K$ be compact metric spaces and denote by $ E=\mathcal{C}(X_1 \times\ldots \times X_K)$ the space of continuous and real-valued functions
on the product, equipped with the supremum norm. 
Then, the dual space $E^*$ can be identified with the space $\mathcal{M}\!(X_1 \times\ldots \times X_K)$ of finite signed regular Borel measures, equipped with the total variation norm. The correspondence is given by
\begin{align*}
\mu \in \mathcal M(X_1 \times \cdots \times X_K)
\;\longmapsto\;
\mathcal{L}_\mu \in E^{*},
\end{align*}
where, for every $u \in E$,
\begin{align*}
\mathcal{L}_\mu(u) 
= \int_{X_1 \times\ldots \times X_K} 
u(x_1, \dots, x_K)\, d\mu(x_1, \dots, x_K).
\end{align*}
\end{theorem}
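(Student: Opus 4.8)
The plan is to reduce the statement to the classical Riesz--Markov--Kakutani representation theorem on a single compact metric space. Since a finite product of compact metric spaces is again a compact metric space, write $Z=X_1\times\cdots\times X_K$ equipped with (say) the sum of the factor metrics, so that $E=\mathcal C(Z)$. It then suffices to show that the map $\mu\mapsto\mathcal L_\mu$ is an isometric isomorphism from $\mathcal M(Z)$ onto $E^*$, where $\mathcal M(Z)$ denotes the finite signed regular Borel measures with the total variation norm.

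First I would dispose of the easy half of the isometry and of uniqueness. For any $\mu\in\mathcal M(Z)$ the functional $\mathcal L_\mu$ is linear and $|\mathcal L_\mu(u)|\le\|u\|_\infty\,|\mu|(Z)$, so $\mathcal L_\mu\in E^*$ with $\|\mathcal L_\mu\|\le\|\mu\|_{TV}$. For the reverse bound, use the Hahn--Jordan decomposition $Z=Z^+\sqcup Z^-$ of $\mu$ together with the regularity of the finite measure $|\mu|$ to build, for each $\varepsilon>0$, a continuous $u$ with $\|u\|_\infty\le1$ and $\mathcal L_\mu(u)\ge|\mu|(Z)-\varepsilon$ (take $u$ close to $\mathbf 1_{Z^+}-\mathbf 1_{Z^-}$ via a Urysohn function separating compact subsets of $Z^+$ and $Z^-$ of almost full measure). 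This gives $\|\mathcal L_\mu\|=\|\mu\|_{TV}$, and applied to differences it yields injectivity of $\mu\mapsto\mathcal L_\mu$, hence uniqueness of the representing measure.

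Next, for surjectivity, I would split an arbitrary $L\in E^*$ into positive and negative parts using the vector-lattice structure of $\mathcal C(Z)$: for $f\ge0$ set $L^+(f)=\sup\{L(g):\ g\in\mathcal C(Z),\ 0\le g\le f\}$, check the decomposition identities ($0\le g\le f_1+f_2$ splits as $g=g_1+g_2$ with $0\le g_i\le f_i$) that make $L^+$ additive and positively homogeneous on the positive cone, extend $L^+$ to all of $\mathcal C(Z)$ by linearity, and note that $L^-:=L^+-L$ is positive as well. It remains to represent an arbitrary positive linear functional $\Lambda$ on $\mathcal C(Z)$. Here I would run the standard Riesz--Markov construction: define $\mu(U)=\sup\{\Lambda(f):\ 0\le f\le1,\ \operatorname{supp}f\subset U\}$ for open $U\subset Z$, then $\mu^*(A)=\inf\{\mu(U):\ A\subset U\text{ open}\}$; verify that $\mu^*$ is a metric outer measure (exploiting that $Z$ is a metric space), so that all Borel sets are $\mu^*$-measurable and $\mu:=\mu^*|_{\mathcal B(Z)}$ is a Borel measure, finite since $\mu(Z)=\Lambda(1)<\infty$, outer regular by construction, and hence (by compactness of $Z$) inner regular as well. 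Finally, sandwiching $f\in\mathcal C(Z)$ between finite combinations of Urysohn functions subordinate to a fine partition of the interval $[\min f,\max f]$ gives $\Lambda(f)=\int_Z f\,d\mu$. Combining, $L=\mathcal L_{\mu^+}-\mathcal L_{\mu^-}=\mathcal L_{\mu^+-\mu^-}$ with $\mu^+-\mu^-\in\mathcal M(Z)$, which completes the proof.

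The main obstacle is the Riesz--Markov construction of the representing measure for a positive functional: proving that the outer measure $\mu^*$ is genuinely Borel-regular and countably additive on $\mathcal B(Z)$, and that $\Lambda(f)=\int f\,d\mu$ for every continuous $f$. Everything else---the lattice splitting $L=L^+-L^-$, the elementary half of the isometry, and uniqueness by regularity---is comparatively routine. A lighter alternative, available because $Z$ is compact metric (so $\mathcal C(Z)$ is separable and $\Lambda$ defines a Daniell integral), is to invoke the Daniell--Stone representation theorem instead; but the measure-construction step remains the technical core in either route.
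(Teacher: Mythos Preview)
The paper does not prove this statement at all: Theorem~\ref{RIEZ} appears in the ``Supporting results'' appendix as a black-box citation to \cite{knapp2007}, with no argument supplied. Your outline is a correct and standard route to the Riesz--Markov--Kakutani theorem (reduction to a single compact metric space, lattice decomposition $L=L^+-L^-$, outer-measure construction for positive functionals, and the isometry via Hahn--Jordan plus Urysohn approximation), so there is nothing to compare against; you have simply filled in what the paper deliberately leaves to the literature.
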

\begin{theorem}
[Arzelà-Ascoli \cite{santambrogio2015}]\label{arz}
Let $X$ be a compact metric space, and let $(f_n)_{n\in\mathbb{N}}$ 
be a sequence of functions $f_n : X \to \mathbb{R}$.
Assume that $(f_n)$ is equicontinuous \footnote{A family of functions $(f_n)_{n \in \mathbb{N}} \subset \mathcal{C}(X,\mathbb{R})$ is \textit{equicontinuous} if, 
for every $\varepsilon > 0$, there exists $\delta > 0$ such that for all $x, y \in X$ and all $n \in \mathbb{N}$, $|x - y| < \delta \ \Rightarrow \ |f_n(x) - f_n(y)| < \varepsilon$.}, and uniformly bounded. Then, there exists a subsequence $(f_{n_k})_{k\in\mathbb{N}}$ 
that converges uniformly on $X$ to a continuous function 
$f : X \to \mathbb{R}$.
\end{theorem}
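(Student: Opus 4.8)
The statement is the classical Arzelà--Ascoli theorem, so the plan is the standard three-stage argument: produce a countable dense subset of $X$, extract by a diagonal procedure a subsequence converging on that set, and then upgrade this to uniform convergence using equicontinuity together with compactness.

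First I would note that a compact metric space is separable, so I fix a countable dense set $D=\{x_j:j\in\mathbb N\}\subset X$. By uniform boundedness, each real sequence $\bigl(f_n(x_j)\bigr)_n$ is bounded and hence admits a convergent subsequence by Bolzano--Weierstrass. Applying this successively for $j=1,2,\dots$ and passing to nested subsequences $\mathbb N\supset S_1\supset S_2\supset\cdots$ with $\bigl(f_n(x_j)\bigr)_{n\in S_j}$ convergent, I take the Cantor diagonal sequence (letting $n_k$ be the $k$-th element of $S_k$); this produces one subsequence $(f_{n_k})_k$ for which $\lim_k f_{n_k}(x_j)$ exists for every $j$, i.e.\ $(f_{n_k})_k$ converges pointwise on $D$.

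Next I would show that $(f_{n_k})_k$ is uniformly Cauchy. Fix $\varepsilon>0$ and use equicontinuity to choose $\delta>0$ with $d(x,y)<\delta\Rightarrow|f_n(x)-f_n(y)|<\varepsilon$ for all $n$. By compactness, $X$ is covered by finitely many $\delta$-balls, whose centers $y_1,\dots,y_M$ may be taken in $D$ by density. Since $(f_{n_k})_k$ converges at each of these finitely many points, there is $N$ with $|f_{n_k}(y_m)-f_{n_l}(y_m)|<\varepsilon$ for all $k,l\ge N$ and all $m$. For arbitrary $x\in X$, choosing $m$ with $d(x,y_m)<\delta$ and applying the triangle inequality through $y_m$ gives $|f_{n_k}(x)-f_{n_l}(x)|<3\varepsilon$ for $k,l\ge N$, uniformly in $x$; hence $\|f_{n_k}-f_{n_l}\|_\infty\le 3\varepsilon$ for $k,l\ge N$. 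Consequently, for each $x$ the real sequence $(f_{n_k}(x))_k$ is Cauchy in $\mathbb R$ and defines a pointwise limit $f(x)$; letting $l\to\infty$ in the uniform Cauchy estimate yields $\|f_{n_k}-f\|_\infty\le 3\varepsilon$ for $k\ge N$, so $f_{n_k}\to f$ uniformly. Finally, $f$ is continuous as a uniform limit of continuous functions (equivalently, $f$ inherits the common modulus of continuity).

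The only genuinely delicate step is the passage from pointwise convergence on the dense set to uniform convergence: this is where all three hypotheses are used simultaneously --- equicontinuity to control oscillations on small balls uniformly in $n$, compactness to reduce to finitely many such balls, and density to place their centers in $D$ so that the already-extracted subsequence is controllable there. The separability and diagonal extraction are routine, and the conclusion then follows from the completeness of $\mathbb R$.
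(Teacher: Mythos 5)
Your argument is correct and complete: separability of the compact metric space, a Bolzano--Weierstrass/Cantor diagonal extraction on a countable dense set, and the standard $3\varepsilon$ upgrade to uniform Cauchyness via equicontinuity plus a finite $\delta$-ball cover with centers in the dense set. The paper itself states this Arzel\`a--Ascoli theorem only as a cited supporting result (Theorem~\ref{arz}, attributed to \cite{santambrogio2015}) and gives no proof, so there is nothing to compare against; your proof is the canonical one and fills that gap correctly, including the minor point that the limit inherits the common modulus of continuity.
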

\begin{definition}\label{tightness}
(Tightness \cite{villani2008})
Let $X$ be a Polish space. A family $P \subset \mathcal{P}(X)$ is said to be \emph{tight} if, 
for every $\varepsilon>0$, there exists a compact set $\mathcal{K}_\varepsilon \subset X$ such that
\begin{align*}
\mu[X \setminus \mathcal{K}_\varepsilon] \le \varepsilon,
\qquad \forall\, \mu \in P.
\end{align*}
\end{definition}
\begin{theorem}[Prokhorov's Theorem \cite{villani2008}]\label{prokhorov}
Let $X$ be a Polish space. A subset $P \subset \mathcal{P}(X)$ is precompact\footnote{A subset of a topological space is called precompact (or relatively compact) if its closure is compact.}
for the weak topology if and only if it is tight.
\end{theorem}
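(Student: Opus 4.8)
The plan is to exploit the metrizability of the weak topology on $\mathcal P(X)$: since $X$ is Polish, the Lévy--Prokhorov distance metrizes weak convergence and makes $\mathcal P(X)$ itself a Polish space, so that precompactness is equivalent to sequential precompactness. Both implications then reduce to statements about sequences: ``$P$ tight'' should yield that every sequence in $P$ has a weakly convergent subsequence, and ``$P$ precompact'' should be used in the form that every sequence in $P$ admits a weakly convergent subsequence.

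\textbf{Tight $\Rightarrow$ precompact.} I would first embed $X$ homeomorphically into the Hilbert cube $\widehat X=[0,1]^{\mathbb N}$ (possible since $X$ is second countable and metrizable, by Urysohn metrization), which is a compact metric space. Each $\mu\in P$ pushes forward to a Borel probability measure $\widehat\mu$ on $\widehat X$ concentrated on the image of $X$. Because $\widehat X$ is compact metric, $C(\widehat X)$ is separable and $\mathcal P(\widehat X)$ is weak-$*$ compact and metrizable by Banach--Alaoglu, hence sequentially compact. Given a sequence $(\mu_n)\subset P$, extract $\widehat\mu_{n_k}\rightsquigarrow\nu$ in $\mathcal P(\widehat X)$. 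Now invoke tightness: choose compacts $K_j\subset X$ (which may be taken increasing) with $\sup_{\mu\in P}\mu(X\setminus K_j)\le 1/j$; each $K_j$ is also compact, hence closed, in $\widehat X$, so the portmanteau inequality for closed sets gives $\nu(K_j)\ge\limsup_k\widehat\mu_{n_k}(K_j)\ge 1-1/j$. Letting $j\to\infty$ shows $\nu$ is concentrated on the $\sigma$-compact set $\bigcup_j K_j\subset X$, so $\nu$ defines an element $\mu^*\in\mathcal P(X)$. It remains to upgrade $\widehat\mu_{n_k}\rightsquigarrow\nu$ to $\mu_{n_k}\rightsquigarrow\mu^*$ in $\mathcal P(X)$: given $f\in\mathcal C_b(X)$ and $\varepsilon>0$, pick $j$ with $1/j<\varepsilon$, extend $f|_{K_j}$ to $\widetilde f\in C(\widehat X)$ with $\|\widetilde f\|_\infty\le\|f\|_\infty$ by Tietze, and control $\bigl|\int f\,d\mu_{n_k}-\int\widetilde f\,d\widehat\mu_{n_k}\bigr|$ and $\bigl|\int f\,d\mu^*-\int\widetilde f\,d\nu\bigr|$ by $2\|f\|_\infty/j$ using the uniform smallness of the mass outside $K_j$; passing to the limit in $k$ and then $\varepsilon\to 0$ closes the argument.

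\textbf{Precompact $\Rightarrow$ tight.} The heart of the matter is the covering lemma: for all $\varepsilon,\delta>0$ there exist finitely many open $\delta$-balls whose union $U$ satisfies $\mu(U)>1-\varepsilon$ for every $\mu\in P$. I would prove this by contradiction. Writing $X=\bigcup_{i\ge 1}B_i$ with $B_i$ open $\delta$-balls centered at a countable dense set, suppose for each $n$ there is $\mu_n\in P$ with $\mu_n\bigl(\bigcup_{i\le n}B_i\bigr)\le 1-\varepsilon$; extract $\mu_{n_k}\rightsquigarrow\mu$ using precompactness, and for fixed $m$ apply the portmanteau inequality for the open set $\bigcup_{i\le m}B_i$ to get $\mu\bigl(\bigcup_{i\le m}B_i\bigr)\le\liminf_k\mu_{n_k}\bigl(\bigcup_{i\le m}B_i\bigr)\le 1-\varepsilon$; letting $m\to\infty$ gives $1=\mu(X)\le 1-\varepsilon$, a contradiction. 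Granting the lemma, for $\varepsilon>0$ apply it with radii $1/j$ and tolerances $\varepsilon/2^j$ to obtain finite unions $U_j$ of $1/j$-balls with $\mu(U_j)>1-\varepsilon/2^j$ for all $\mu\in P$, and set $K=\bigcap_j\overline{U_j}$. Then $K$ is closed and totally bounded, hence compact because $X$ carries a complete metric, and $\mu(X\setminus K)\le\sum_j\mu(X\setminus U_j)\le\varepsilon$ for all $\mu\in P$, which is exactly tightness.

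\textbf{Main obstacle.} The delicate point is the ``upgrade'' step in the first implication: weak convergence of the extensions $\widehat\mu_{n_k}$ in the compactification $\widehat X$ does not by itself imply weak convergence of $\mu_{n_k}$ in $\mathcal P(X)$, since a bounded continuous function on $X$ need not extend continuously to $\widehat X$; it is precisely here that tightness must be used a second time, to localize onto the compacts $K_j$ where Tietze extension applies. A secondary point is the role of completeness in the converse direction --- total boundedness alone does not give compactness --- so the proof genuinely uses both defining properties of a Polish space, one in each implication.
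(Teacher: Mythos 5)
The paper does not prove this statement: Theorem~\ref{prokhorov} is imported from Villani's monograph as a supporting result in the appendix, so there is no internal proof to compare yours against. On its own merits, your argument is correct and is essentially the classical textbook proof (Billingsley/Villani). For the direction ``tight $\Rightarrow$ precompact'' you embed $X$ in the Hilbert cube $\widehat X$, use weak-$*$ sequential compactness and metrizability of $\mathcal P(\widehat X)$, and then use tightness twice: once, via portmanteau on the closed sets $K_j$, to see that the limit $\nu$ charges the $\sigma$-compact set $\bigcup_j K_j\subset X$ (which neatly sidesteps the fact that the image of $X$ need not be closed in $\widehat X$), and once more in the Tietze-extension step to upgrade convergence in $\mathcal P(\widehat X)$ to convergence in $\mathcal P(X)$; your error bound $2\|f\|_\infty/j$ on each side is exactly what is needed, and the final reduction from sequential precompactness to precompactness is legitimate because the weak topology on $\mathcal P(X)$ is metrizable for separable $X$. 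For ``precompact $\Rightarrow$ tight'', the finite-ball covering lemma proved by contradiction with the portmanteau inequality for open sets, followed by the construction $K=\bigcap_j\overline{U_j}$ (closed and totally bounded, hence compact by completeness of the metric), is the standard and correct route; the crucial use of precompactness is that the subsequential limit is a genuine element of $\mathcal P(X)$, so no mass escapes. You also correctly locate where each half of ``Polish'' is used: separability/metrizability for sufficiency, completeness for necessity. In the context of this paper, which only invokes Prokhorov's theorem (through Lemma~\ref{compact}) to get weak compactness of $\Pi(\mu_1,\dots,\mu_K)$, citing the result as the authors do is standard, but your proof would be an acceptable self-contained substitute.
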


\begin{definition}[Weak convergence \cite{van2013}]\label{cv}
Let $X_n : \Omega \to \mathcal{X}$ and $X : \Omega \to \mathcal{X}$ be arbitrary measurable maps. If $X_n$ and $X$ are $\mathcal{X}$-valued random variables with respective laws 
$\mathbb{P}_n$ and $\mathbb{P}$, then
\begin{align*}
X_n \rightsquigarrow X
\quad \Longleftrightarrow \quad
\int_{\mathcal{X}} f\, d\mathbb{P}_n 
\longrightarrow 
\int_{\mathcal{X}} f\, d\mathbb{P},
\qquad \text{for all } f \in \mathcal{C}_b(\mathcal{X}).
\end{align*}
\end{definition}
\begin{lemma}[\cite{knapp2007}]
Let $(X, \mathcal{B}(X))$ be a Polish space, and let $\mu$ be a Borel probability measure on $X$. Then $\mu$ is regular: for every Borel set $A \subset X$,
\begin{align*}
\mu(A) 
&= \sup \{\, \mu(K) : K \subset A,\, K \text{ compact} \,\}
\\
&= \inf \{\, \mu(O) : A \subset O,\, O \text{ open} \,\}.
\end{align*}
\end{lemma}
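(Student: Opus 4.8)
The plan is to proceed in two stages. First I would establish the weaker ``sandwich'' statement that every Borel set $A$ can be approximated from inside by a \emph{closed} set and from outside by an \emph{open} set with $\mu$-measure error as small as desired; this is the part that genuinely uses the metric (rather than merely topological) structure of $X$ and the finiteness of $\mu$. Then I would upgrade the inner closed approximation to an inner \emph{compact} approximation, where completeness and separability of the Polish metric enter through total boundedness.

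\emph{Stage 1 (good-sets principle).} Let $\mathcal{A}$ denote the collection of all $A\in\mathcal{B}(X)$ such that for every $\varepsilon>0$ there exist a closed $F$ and an open $O$ with $F\subseteq A\subseteq O$ and $\mu(O\setminus F)<\varepsilon$. I would first check that every closed set $F$ lies in $\mathcal{A}$: the open sets $O_n=\{x\in X:\, d(x,F)<1/n\}$ satisfy $O_n\downarrow F$, so by continuity from above (valid since $\mu(X)=1<\infty$) one has $\mu(O_n)\to\mu(F)$, and $F$ itself serves on the inner side. Next, $\mathcal{A}$ is closed under complementation, since its defining condition is symmetric under $A\mapsto X\setminus A$. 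Finally, $\mathcal{A}$ is closed under countable unions: given $A=\bigcup_n A_n$ with $A_n\in\mathcal{A}$, pick $F_n\subseteq A_n\subseteq O_n$ with $\mu(O_n\setminus F_n)<\varepsilon 2^{-n-1}$; then $O:=\bigcup_n O_n$ is open with $\mu(O\setminus A)\le\sum_n\mu(O_n\setminus F_n)<\varepsilon/2$, while the closed sets $\bigcup_{n\le N}F_n$ increase to $\bigcup_n F_n$, so continuity from below lets us fix $N$ with $\mu\bigl(\bigcup_n F_n\setminus\bigcup_{n\le N}F_n\bigr)<\varepsilon/2$, whence $F:=\bigcup_{n\le N}F_n$ is a closed subset of $A$ with $\mu(A\setminus F)<\varepsilon$. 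Thus $\mathcal{A}$ is a $\sigma$-algebra containing all closed sets, so $\mathcal{A}=\mathcal{B}(X)$, which already yields outer regularity by open sets and inner regularity by closed sets.

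\emph{Stage 2 (closed $\to$ compact).} Fix a complete separable metric $d$ on $X$, a closed set $F\subseteq X$, and $\varepsilon>0$. For each $n\ge1$, separability yields a countable cover of $X$ by open $1/n$-balls; by continuity from below, finitely many of them suffice up to error $\varepsilon 2^{-n}$, and letting $B_n$ be the union of the corresponding closed $1/n$-balls we get a closed, totally bounded set with $\mu(X\setminus B_n)<\varepsilon 2^{-n}$. Set $K:=F\cap\bigcap_{n\ge1}B_n$. Then $K$ is closed, and it is totally bounded since for each $n$ it is covered by the finitely many $1/n$-balls comprising $B_n$; completeness of $d$ then forces $K$ to be compact. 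Moreover $\mu(F\setminus K)\le\sum_{n\ge1}\mu(X\setminus B_n)<\varepsilon$. Combining the stages, for $A\in\mathcal{B}(X)$ and $\varepsilon>0$ Stage 1 gives closed $F\subseteq A$ with $\mu(A\setminus F)<\varepsilon/2$ and Stage 2 gives compact $K\subseteq F$ with $\mu(F\setminus K)<\varepsilon/2$, so $\mu(A\setminus K)<\varepsilon$; passing to the supremum over compact $K\subseteq A$ and the infimum over open $O\supseteq A$ gives the two displayed identities.

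\emph{Main obstacle.} The hardest part will be the verification in Stage 1 that $\mathcal{A}$ is closed under countable unions: one must split the error budget as $\varepsilon 2^{-n-1}$ across the pieces and crucially invoke continuity from above (on the open side) together with continuity from below (to truncate the countable union of closed sets to a finite, still closed, subunion), both of which rely on $\mu$ being a finite measure. The conceptual point in Stage 2 is to recognize that the passage from ``closed'' to ``compact'' is exactly the statement that closed plus totally bounded equals compact in a complete metric space, so that it is the \emph{completeness} of the Polish metric — not merely the existence of some compatible metric — that makes compact inner approximation possible; once Stage 1 is in hand, outer regularity requires no further argument.
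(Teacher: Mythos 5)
Your proof is correct. Note, however, that the paper does not prove this lemma at all: it is stated in the appendix as a supporting fact and attributed to Knapp \cite{knapp2007}, so there is no internal argument to compare against. Your two-stage argument is the standard textbook route — the good-sets ($\sigma$-algebra) argument giving inner regularity by closed sets and outer regularity by open sets for any finite Borel measure on a metric space, followed by Ulam's tightness construction upgrading closed to compact via completeness and separability — and all the steps check out (the $\varepsilon/2$ vs.\ $\varepsilon$ bookkeeping when you combine $\mu(O\setminus A)$ and $\mu(A\setminus F)$ is off by a harmless constant factor). One cosmetic slip: the finite union $B_n$ of closed $1/n$-balls need not itself be totally bounded in a general Polish space (balls need not be totally bounded); what matters, and what you correctly use, is that $K\subseteq\bigcap_n B_n$ is covered for every $n$ by finitely many $1/n$-balls, hence totally bounded, and being closed in a complete space it is compact.
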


\begin{lemma}[\cite{pass2022}]\label{pass}
Let $\pi \in \Pi(\mu_1,\dots,\mu_K)$. Then $\pi$ is optimal for the multimarginal Kantorovich problem
if and only if supp($\pi$) is a $c$-splitting set.
\end{lemma}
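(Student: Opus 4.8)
Working under the paper's standing hypotheses ($c$ bounded and continuous), the plan is to prove the two implications separately. The sufficiency direction ($\Leftarrow$) will be a short duality computation; for the necessity direction ($\Rightarrow$) I would first show that the support of an optimal plan is $c$-cyclically monotone, via a mass-rerouting (exchange) argument, and then invoke the structural theorem of Griessler~\cite{griessler2016} (recalled just before Proposition~\ref{sp}), which guarantees that every nonempty $c$-cyclically monotone set is $c$-splitting.

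\textbf{($\Leftarrow$).} Suppose $\Gamma:=\operatorname{supp}(\pi)$ is $c$-splitting. Since $\Gamma$ is nonempty and $c$ is bounded, Proposition~\ref{bounded-splitting} yields a splitting family $(f_1,\dots,f_K)$ for $\Gamma$ with real-valued bounded components, so $f_k\in L^1(\mu_k)$ and $\boldsymbol f\in\mathcal F_c$; weak duality (Lemma~\ref{easyy}) then gives $J(\boldsymbol f)\le\mathsf{MOT}_c(\mu_1,\dots,\mu_K)$. As the support of a Borel probability measure on a Polish space has full measure, $\pi$ is concentrated on $\Gamma$, where $\sum_k f_k=c$; combining this with the marginal constraints $(\mathrm{pr}_k)_\#\pi=\mu_k$ I would compute
\[
I(\pi)=\int_{\Gamma} c\,d\pi=\int_{\Gamma}\sum_{k=1}^K f_k(x_k)\,d\pi=\sum_{k=1}^K\int_{X_k} f_k\,d\mu_k=J(\boldsymbol f).
\]
Since $\pi$ is feasible, $\mathsf{MOT}_c(\mu_1,\dots,\mu_K)\le I(\pi)=J(\boldsymbol f)\le\mathsf{MOT}_c(\mu_1,\dots,\mu_K)$, so all are equal and $\pi$ is optimal.

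\textbf{($\Rightarrow$).} Assume $\pi$ is optimal, and suppose for contradiction that $\Gamma=\operatorname{supp}(\pi)$ is not $c$-cyclically monotone: there are $n\ge1$, points $(x_1(i),\dots,x_K(i))\in\Gamma$, $i=1,\dots,n$, and permutations $\sigma_2,\dots,\sigma_K$ of $\{1,\dots,n\}$ with
\[
\sum_{i=1}^n c\bigl(x_1(i),x_2(\sigma_2(i)),\dots,x_K(\sigma_K(i))\bigr)<\sum_{i=1}^n c\bigl(x_1(i),\dots,x_K(i)\bigr).
\]
By continuity of $c$ this strict inequality survives, with a fixed gap $\eta>0$, when each $x_k(i)$ is replaced by an arbitrary point of a sufficiently small open box $U_k(i)\ni x_k(i)$; and since these tuples lie in $\operatorname{supp}(\pi)$, each $B_i:=\prod_k U_k(i)$ satisfies $\pi(B_i)>0$. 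Setting $\pi_i:=\pi(B_i)^{-1}\pi|_{B_i}$, with coordinate marginals $\pi_i^k$, and choosing $\delta>0$ small enough that $\frac{\delta}{n}\sum_i\pi_i\le\pi$, I would form
\[
\widetilde\pi:=\pi-\frac{\delta}{n}\sum_{i=1}^n\pi_i+\frac{\delta}{n}\sum_{i=1}^n\bigl(\pi_i^1\otimes\pi_{\sigma_2(i)}^2\otimes\cdots\otimes\pi_{\sigma_K(i)}^K\bigr),
\]
a nonnegative measure of total mass one; because each $\sigma_k$ is a permutation, its marginals are unchanged, so $\widetilde\pi\in\Pi(\mu_1,\dots,\mu_K)$. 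Estimating the cost block by block (the $i$-th added block is supported near $(x_1(i),x_2(\sigma_2(i)),\dots,x_K(\sigma_K(i)))$ and the $i$-th subtracted block near $(x_1(i),\dots,x_K(i))$), the neighborhood inequality gives $I(\widetilde\pi)<I(\pi)$, contradicting optimality. Hence $\Gamma$ is $c$-cyclically monotone and nonempty, so Griessler's theorem furnishes potentials $f_k:X_k\to\mathbb R\cup\{-\infty\}$ with $\sum_k f_k\le c$ everywhere and $\sum_k f_k=c$ on $\Gamma$, i.e.\ $\Gamma$ is $c$-splitting.

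\textbf{Main obstacle.} The hard part will be the exchange argument in ($\Rightarrow$): making $\widetilde\pi$ a genuine element of $\Pi(\mu_1,\dots,\mu_K)$ (which needs $\delta$ small enough even when the boxes $B_i$ overlap, plus care with repeated base points), and turning the finite pointwise cyclic inequality into a strict decrease of $\int c\,d\pi$ using only continuity of $c$. Everything else, namely the integration identity in ($\Leftarrow$), full support of $\pi$, invariance of marginals under the rerouting, and the appeal to Griessler's theorem, is routine.
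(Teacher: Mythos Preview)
The paper does not prove this lemma: it is listed in the appendix among the ``supporting results'' (alongside Fenchel--Rockafellar, Riesz, Prokhorov, etc.) and is simply cited from~\cite{pass2022}. So there is no paper proof to compare against.

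Your proposal is correct and is in fact the standard route taken in the cited literature. The sufficiency direction is exactly the duality sandwich that the paper itself uses in the proof of Theorem~\ref{noncompact} (where it invokes this very lemma). For the necessity direction, your mass-rerouting argument to obtain $c$-cyclical monotonicity of $\operatorname{supp}(\pi)$, followed by the appeal to Griessler's theorem, is precisely how the result is established in~\cite{griessler2016} and~\cite{pass2022}. The technical point you flag as the ``main obstacle'' is handled by your sketch: taking $\delta\le\min_i\pi(B_i)$ ensures $\frac{\delta}{n}\sum_i\pi_i\le\pi$ even when the boxes overlap (since at any point the density of $\sum_i\pi_i$ with respect to $\pi$ is at most $n/\min_i\pi(B_i)$), the permutation structure guarantees that marginals are preserved, and uniform continuity of $c$ on the compact closures of the boxes converts the finite pointwise inequality into a strict decrease of $\int c\,d\widetilde\pi$ versus $\int c\,d\pi$.
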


\bibliographystyle{plain}

\begin{thebibliography}{10}

\bibitem{arjovsky2017}
Martin Arjovsky, Soumith Chintala, and L{\'e}on Bottou.
\newblock Wasserstein generative adversarial networks.
\newblock In {\em International conference on machine learning}, pages
  214--223. PMLR, 2017.

\bibitem{bourbaki1998}
Nicolas Bourbaki.
\newblock {\em General topology. {C}hapters 5--10}.
\newblock Elements of Mathematics (Berlin). Springer-Verlag, Berlin, 1998.
\newblock Translated from the French, Reprint of the 1989 English translation.

\bibitem{Cao2019}
Jiezhang Cao, Langyuan Mo, Yifan Zhang, Kui Jia, Chunhua Shen, and Mingkui Tan.
\newblock Multi-marginal wasserstein gan.
\newblock {\em Advances in Neural Information Processing Systems}, 32, 2019.

\bibitem{carlier2010}
G.~Carlier and I.~Ekeland.
\newblock Matching for teams.
\newblock {\em Econom. Theory}, 42(2):397--418, 2010.

\bibitem{colombo2015}
Maria Colombo, Luigi De~Pascale, and Simone Di~Marino.
\newblock Multimarginal optimal transport maps for one-dimensional repulsive
  costs.
\newblock {\em Canad. J. Math.}, 67(2):350--368, 2015.

\bibitem{del2019}
Eustasio del Barrio and Jean-Michel Loubes.
\newblock Central limit theorems for empirical transportation cost in general
  dimension.
\newblock {\em Ann. Probab.}, 47(2):926--951, 2019.

\bibitem{deshpande2018}
Ishan Deshpande, Ziyu Zhang, and Alexander~G Schwing.
\newblock Generative modeling using the sliced wasserstein distance.
\newblock In {\em Proceedings of the IEEE conference on computer vision and
  pattern recognition}, pages 3483--3491, 2018.

\bibitem{fournier2015}
Nicolas Fournier and Arnaud Guillin.
\newblock On the rate of convergence in {W}asserstein distance of the empirical
  measure.
\newblock {\em Probab. Theory Related Fields}, 162(3-4):707--738, 2015.

\bibitem{gangbo1998}
Wilfrid Gangbo and Andrzej \'{S}wi\c{e}ch.
\newblock Optimal maps for the multidimensional {M}onge-{K}antorovich problem.
\newblock {\em Comm. Pure Appl. Math.}, 51(1):23--45, 1998.

\bibitem{genevay2018}
Aude Genevay, Gabriel Peyr{\'e}, and Marco Cuturi.
\newblock Learning generative models with sinkhorn divergences.
\newblock In {\em International Conference on Artificial Intelligence and
  Statistics}, pages 1608--1617. PMLR, 2018.

\bibitem{griessler2016}
Claus Griessler.
\newblock {$C$}-cyclical monotonicity as a sufficient criterion for optimality
  in the multimarginal {M}onge-{K}antorovich problem.
\newblock {\em Proc. Amer. Math. Soc.}, 146(11):4735--4740, 2018.

\bibitem{He2019}
Zhenliang He, Wangmeng Zuo, Meina Kan, Shiguang Shan, and Xilin Chen.
\newblock Attgan: Facial attribute editing by only changing what you want.
\newblock {\em IEEE transactions on image processing}, 28(11):5464--5478, 2019.

\bibitem{heinich2002}
Henri Heinich.
\newblock Probl\`eme de {M}onge pour {$n$} probabilit\'{e}s.
\newblock {\em C. R. Math. Acad. Sci. Paris}, 334(9):793--795, 2002.

\bibitem{Hui2018}
Le~Hui, Xiang Li, Jiaxin Chen, Hongliang He, and Jian Yang.
\newblock Unsupervised multi-domain image translation with domain-specific
  encoders/decoders.
\newblock In {\em 2018 24th International Conference on Pattern Recognition
  (ICPR)}, pages 2044--2049. IEEE, 2018.

\bibitem{article}
L.~Kantorovitch.
\newblock On the translocation of masses.
\newblock {\em C. R. (Doklady) Acad. Sci. URSS (N.S.)}, 37:199--201, 1942.

\bibitem{knapp2007}
Anthony~W. Knapp.
\newblock {\em Basic real analysis}.
\newblock Cornerstones. Birkh\"{a}user Boston, Inc., Boston, MA, 2005.
\newblock Along with a companion volume {\it Advanced real analysis}.

\bibitem{knott1984}
M.~Knott and C.~S. Smith.
\newblock On the optimal mapping of distributions.
\newblock {\em J. Optim. Theory Appl.}, 43(1):39--49, 1984.

\bibitem{Krylov2002}
N.~V. Krylov.
\newblock {\em Introduction to the theory of random processes}, volume~43 of
  {\em Graduate Studies in Mathematics}.
\newblock American Mathematical Society, Providence, RI, 2002.

\bibitem{monge1781memoire}
G.~Monge.
\newblock {\em M{\'e}moire sur la th{\'e}orie des d{\'e}blais et des remblais}.
\newblock Imprimerie royale, 1781.

\bibitem{pass2012}
Brendan Pass.
\newblock On the local structure of optimal measures in the multi-marginal
  optimal transportation problem.
\newblock {\em Calc. Var. Partial Differential Equations}, 43(3-4):529--536,
  2012.

\bibitem{pass2015}
Brendan Pass.
\newblock Multi-marginal optimal transport: theory and applications.
\newblock {\em ESAIM Math. Model. Numer. Anal.}, 49(6):1771--1790, 2015.

\bibitem{pass2022}
Brendan Pass and Adolfo Vargas-Jim\'{e}nez.
\newblock Monge solutions and uniqueness in multi-marginal optimal transport
  via graph theory.
\newblock {\em Adv. Math.}, 428:Paper No. 109101, 43, 2023.

\bibitem{peyre2019}
Gabriel Peyr{\'e}, Marco Cuturi, et~al.
\newblock Computational optimal transport: With applications to data science.
\newblock {\em Foundations and Trends{\textregistered} in Machine Learning},
  11(5-6):355--607, 2019.

\bibitem{rachev1998}
Svetlozar~T. Rachev and Ludger R\"{u}schendorf.
\newblock {\em Mass transportation problems. {V}ol. {I}}.
\newblock Probability and its Applications (New York). Springer-Verlag, New
  York, 1998.
\newblock Theory.

\bibitem{santambrogio2015}
Filippo Santambrogio.
\newblock {\em Optimal transport for applied mathematicians}, volume~87 of {\em
  Progress in Nonlinear Differential Equations and their Applications}.
\newblock Birkh\"{a}user/Springer, Cham, 2015.
\newblock Calculus of variations, PDEs, and modeling.

\bibitem{solomon2015}
Justin Solomon, Fernando De~Goes, Gabriel Peyr{\'e}, Marco Cuturi, Adrian
  Butscher, Andy Nguyen, Tao Du, and Leonidas Guibas.
\newblock Convolutional wasserstein distances: Efficient optimal transportation
  on geometric domains.
\newblock {\em ACM Transactions on Graphics (ToG)}, 34(4):1--11, 2015.

\bibitem{Sudakov1979}
V.~N. Sudakov.
\newblock Geometric problems in the theory of infinite-dimensional probability
  distributions.
\newblock {\em Proc. Steklov Inst. Math.}, (2):i--v, 1--178, 1979.

\bibitem{tameling2019}
Carla Tameling, Max Sommerfeld, and Axel Munk.
\newblock Empirical optimal transport on countable metric spaces:
  distributional limits and statistical applications.
\newblock {\em Ann. Appl. Probab.}, 29(5):2744--2781, 2019.

\bibitem{van2013}
Aad~W. van~der Vaart and Jon~A. Wellner.
\newblock {\em Weak convergence and empirical processes}.
\newblock Springer Series in Statistics. Springer-Verlag, New York, 1996.
\newblock With applications to statistics.

\bibitem{villani2003}
C\'{e}dric Villani.
\newblock {\em Topics in optimal transportation}, volume~58 of {\em Graduate
  Studies in Mathematics}.
\newblock American Mathematical Society, Providence, RI, 2003.

\bibitem{villani2008}
C\'{e}dric Villani.
\newblock {\em Optimal transport}, volume 338 of {\em Grundlehren der
  mathematischen Wissenschaften [Fundamental Principles of Mathematical
  Sciences]}.
\newblock Springer-Verlag, Berlin, 2009.
\newblock Old and new.

\end{thebibliography}

\end{document}